
\documentclass[11pt,a4paper]{amsart}

\usepackage{amsthm, amsfonts, amssymb, amsmath, latexsym, enumerate}
\usepackage[all]{xy} \usepackage[latin1]{inputenc}
\usepackage{array}
\usepackage{pifont}
\usepackage{hyperref}
\usepackage{cite}

\usepackage[left=2.8cm,top=2.60cm,right=2.8cm]{geometry}

\usepackage{mathrsfs}
\newcommand{\mathbold}{\mathbf}

\newtheorem{thm}{Theorem}[section] \newtheorem{lem}[thm]{Lemma}
 \newtheorem{prop}[thm]{Proposition}
\newtheorem{claim}[thm]{Claim}
\newtheorem*{thm*}{Theorem}

\theoremstyle{definition} \newtheorem{rmk}[thm]{Remark}

\newcommand{\OO}{\mathscr{O}}

\newcommand{\EExt}{\mathscr{E}xt}
\newcommand{\HHom}{\mathscr{H}om}

\newcommand{\sE}{\mathscr{E}}
\newcommand{\EE}{\sE}
\newcommand{\sH}{\mathscr{H}}
\newcommand{\sF}{\mathscr{F}}

\newcommand{\sP}{\mathscr{P}}

\newcommand{\cH}{\mathcal{H}}

\newcommand{\cK}{\mathcal{K}}
\newcommand{\cL}{\mathcal{L}}
\newcommand{\cF}{\mathcal{F}}
\newcommand{\cV}{\mathcal{V}}

\newcommand{\cN}{\mathcal{N}}

\newcommand{\UU}{\mathcal{U}}

\newcommand{\cI}{\mathcal{I}}

\newcommand{\Mo}{{\sf M}}

\DeclareMathOperator{\rk}{rk}
\DeclareMathOperator{\Hilb}{Hilb}
\DeclareMathOperator{\Ext}{Ext} 

\DeclareMathOperator{\Hom}{Hom} 
\DeclareMathOperator{\im}{Im} \DeclareMathOperator{\cok}{cok}
\DeclareMathOperator{\HH}{H} \DeclareMathOperator{\hh}{h}
\DeclareMathOperator{\ext}{ext}

\DeclareMathOperator{\Pic}{Pic}


\newcommand{\Z}{\mathbb Z} \newcommand{\C}{\mathbb C}
 
 \newcommand{\p}{\mathbb P}

\DeclareMathOperator{\D}{\mathbf{D^b}}

\newcommand{\RRHHom}{\mathbold{R}\mathcal{H}om}
\newcommand{\RR}{\mathbold{R}}

\newcommand{\ph}{\mathbf{\Phi}}
\newcommand{\phs}{\mathbf{\Phi^{*}}}
\newcommand{\phx}{\mathbf{\Phi^{!}}}
\newcommand{\pho}{\mathbf{\Phi_0}}

\newcommand{\phos}{\mathbf{\Phi^{*}_0}}
\newcommand{\thx}{\mathbf{\Theta}}

\DeclareMathOperator{\ts}{\otimes}
\newcommand{\rr}{\rightarrow}

\newcommand{\mono}{\hookrightarrow}
\newcommand{\epi}{\twoheadrightarrow}
\newcommand{\xr}{\xrightarrow}
\newcommand{\f}{F}

\SelectTips{cm}{} \newdir{ >}{{}*!/-5pt/@{>}}
\numberwithin{equation}{section}

\allowdisplaybreaks[1]
\sloppy

\begin{document}


\title[Sheaves on Fano threefolds of genus 9]
{Rank $2$ stable sheaves with odd determinant\\
on Fano threefolds of genus $9$.}

\author{Maria Chiara Brambilla}
\email{{\tt brambilla@dipmat.univpm.it}}
\address{Dipartimento di Scienze Matematiche,
  Università Politecnica delle Marche\\
  Via Brecce Bianche, I-60131 Ancona - Italia}
\urladdr{\tt{\url{http://www.dipmat.univpm.it/~brambilla}}}

\author{Daniele Faenzi}
\email{{\tt daniele.faenzi@univ-pau.fr}}
\address{Université de Pau et des Pays de l'Adour \\
  Av. de l'Université - BP 576 - 64012 PAU Cedex - France}
\urladdr{\tt{\url{http://www.univ-pau.fr/~faenzi/}}}

\keywords{Prime Fano threefolds of genus 9.
  Moduli space of vector bundles.
  Semiorthogonal decomposition.
  Brill-Noether theory.
  Stable vector bundles on curves.}

\subjclass[2000]{Primary 14J60. Secondary 14H30, 14F05, 14D20.}

\thanks{The first author was partially supported by INDAM and MIUR.
The second author was partially supported by GRIFGA and MIUR. 
}

\begin{abstract}
  By the description due to Mukai and Iliev,
  a smooth prime Fano threefold $X$ of genus $9$ is  
  associated to a surface $\p(\cV)$, ruled over a smooth plane quartic
  $\Gamma$.
  We use Kuznetsov's integral functor to
  study rank-$2$ stable sheaves  on $X$  with odd determinant.
  For each $c_2 \geq 7$, we prove that a component of their moduli space $\Mo_X(2,1,c_2)$ is birational to a
  Brill-Noether locus of bundles on $\Gamma$ having enough sections when
  twisted by $\cV$.
  

  Moreover we prove that $\Mo_X(2,1,7)$ is isomorphic to 
  the blowing-up of the Picard variety $\mathrm{Pic}^{2}(\Gamma)$
  along 
  the curve parametrizing lines contained in $X$.
\end{abstract}

\maketitle

\section{Introduction}

Let $X$ be a smooth projective threefold, whose Picard group is generated by an
ample divisor $H_{X}$.
We consider Maruyama's coarse moduli scheme $\Mo_{X}(r,c_{1},c_{2})$
of $H_{X}$-semistable rank $r$ sheaves $F$ on $X$ with $c_{i}(F)=c_{i}$ and $c_3(F)=0$.

Little is known about this space in general,
but many results are available in special cases.
For instance, rank $2$ bundles on
$\p^{3}$ have been intensively studied since
\cite{barth:some-properties}.

Since \cite{adhm} and \cite{atiyah-ward}, the case which has
attracted most attention is that of {\it instanton bundles}, i.e.\
stable rank $2$ bundles $F$ with $c_{1}(F)=0$,
$\HH^{2}(\p^{3},F(-2))=0$. Their moduli space is known to be
smooth and irreducible for $c_{2}(F) \leq 5$, see
\cite{kastylo-ottaviani}, \cite{coanda-tikhomirov-trautmann} and
references therein. The starting points in the investigation of
this case are Beilinson's theorem and the notion of monad, see
\cite{barth-hulek}, \cite{okonek-schneider-spindler}.

Now, if one desires to set up a similar analysis over a threefold
$X$ other than $\p^{3}$, one direction is to look at Fano
threefolds. Recall that if the anticanonical divisor $-K_{X}$ is
linearly equivalent to $i_{X} H_{X}$, for some positive integer
$i_{X}$, then the variety $X$ is called a {\it Fano threefold} of
index $i_{X}$. These varieties are in fact completely classified
by Iskovskih and later by Mukai, see \cite{fano-encyclo} and
references therein.

Our aim is to study the moduli space $\Mo_{X}(2,c_{1},c_{2})$ on a
Fano threefold $X$ of index $i_{X}=1$.
Recall that the genus of a Fano threefold $X$ of index $1$ is defined as $g=H_{X}^{3}/2+1$.
Notice that, since the rank of a sheaf $F$ in $\Mo_{X}(2,c_{1},c_{2})$
is $2$, one can assume $c_1\in\{0,1\}$. Accordingly, we speak of
bundles with odd or even determinant.

Let us focus on the case of odd determinant.
One sees that $\Mo_{X}(2,1,c_{2})$ is empty for
$c_{2} < m_{g}=\lceil g/2 \rceil+1$.
The case of minimal $c_{2}=m_{g}$ is well understood (see for instance
\cite{iliev-markushevich:genus-7} for genus $7$,
\cite{iliev-ranestad} for genus $9$,
\cite{kuznetsov:V22} for genus $12$).
For higher $c_{2}$, we are aware of the results contained in
\cite{iliev-markushevich:sing-theta:asian}, \cite{enrique-dani:v22},
\cite{iliev-manivel:genus-8}, \cite{brambilla-faenzi:genus-7}, where
only the last two papers study also the boundary of $\Mo_{X}(2,1,c_{2})$.

This paper, together with \cite{brambilla-faenzi:genus-7},  is devoted to the study of the space
$\Mo_{X}(2,1,c_{2})$ for $c_{2} > m_{g}$,
with a special emphasis  on $c_{2}=m_{g}+1$.
Our main idea is to make use of Kuznetsov's semiorthogonal decomposition of
the derived category of $X$ (see \cite{kuznetsov:hyperplane}), to
develop a suitable homological method, thus rephrasing the language of
monads and Beilinson's theorem.

More precisely, in this paper we focus on Fano threefolds $X$ of genus $9$.
Recall that, by a result of Mukai, \cite{mukai:curves-K3},
\cite{mukai:biregular}, the variety $X$ is a linear section of
the Lagrangian Grassmannian sixfold $\Sigma$.
We consider the orthogonal plane quartic $\Gamma$, and the integral
functor $\phx: \D(X) \to \D(\Gamma)$, according to Kuznetsov's theorem, \cite{kuznetsov:hyperplane}.
The functor is right adjoint to the fully faithful
functor $\ph$, provided by the universal sheaf $\EE$ on
$X \times \Gamma$ for the fine moduli space $\Gamma \cong \Mo_{X}(2,1,6)$.
Recall that the threefold $X$ is associated to a rank $2$ stable bundle $\cV$ on
$\Gamma$, in such a way that $\p(\cV)$ is isomorphic to the Hilbert
scheme $\sH^{0}_{2}(X)$ of conics contained in $X$, see \cite{iliev:sp3}.

For any $d\geq 7$, we proved in \cite{brambilla-faenzi:genus-7} that
there exists a component $\Mo(d)$ of
$\Mo_{X}(2,1,d)$, whose general element is a vector bundle $F$ with
$\HH^{k}(X,F(-1))=0$, for all $k$. Here we investigate in details the
properties of $\Mo(d)$.

The main result of this paper is the following.
\begin{thm*}
 The map $\varphi : F \mapsto \phx(F)$ gives:
 \begin{enumerate}[A)]
 \item \label{A} for any $d \geq 8$, a birational map
   of $\Mo(d)$ to a generically smooth $(2d-11)$-dimensional
   component of the Brill-Noether locus:
    \[
    \{ \cF \in \Mo_{\Gamma}(d-6,d-5) \, \lvert \,
    \hh^{0}({\Gamma},\cV \ts \cF) \geq d-6 \};
    \]
  \item \label{B} an isomorphism of $\Mo_{X}(2,1,7)$ with the blowing up of
    $\Pic^{2}(\Gamma)$ along a curve isomorphic to the Hilbert scheme $\sH^{0}_{1}(X)$
    of lines contained in $X$.
    The exceptional divisor consists of the sheaves in $\Mo_X(2,1,7)$
    which are not globally generated.
 \end{enumerate}
\end{thm*}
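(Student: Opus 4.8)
The plan is to turn the functorial map $\varphi\colon F\mapsto \phx(F)$ into a genuine (birational, resp.\ biregular) correspondence by reconstructing $F$ from $\phx(F)$ through Kuznetsov's semiorthogonal decomposition $\D(X)=\langle \ph(\D(\Gamma)),\,\OO_X,\,\ldots\rangle$. First I would take a general $F\in\Mo(d)$ and prove that the complex $\phx(F)$ is quasi-isomorphic to a single coherent sheaf $\cF$ in degree $0$. The input is the defining vanishing $\HH^k(X,F(-1))=0$ of the general member of $\Mo(d)$: feeding the skyscrapers $\OO_p$, $p\in\Gamma$, into the adjunction $\Hom_{\D(X)}(\ph(-),F)\cong\Hom_{\D(\Gamma)}(-,\phx(F))$ computes the stalk cohomology of $\phx(F)$ and forces the cohomology sheaves $\cH^i(\phx(F))$ to vanish for $i\neq 0$. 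The Mukai vector of $\cF$ is then obtained from the action of $\phx$ on the Grothendieck group, i.e.\ by Grothendieck--Riemann--Roch for the integral kernel $\EE$; this yields $\rk\cF=d-6$ and $\deg\cF=d-5$, so that $\cF\in\Mo_\Gamma(d-6,d-5)$, with the rank dropping to $1$ and degree $2$ exactly when $d=7$.

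For part \ref{A} I would establish the Brill--Noether inequality by reading $\hh^0(\Gamma,\cV\ts\cF)$ off the geometry of $\cV$. Since $\p(\cV)\cong\sH^0_2(X)$ parametrizes the conics of $X$, the sections of $\cV\ts\phx(F)$ correspond, via base change and the projection formula for $\EE$, to a cohomology group of $F$ pulled back along the universal conic; the vanishing hypotheses on $F(-1)$ bound this below by $d-6$, which is the required condition. Stability of $\cF$ follows for the general member from stability of $F$ together with the exactness of $\phx$ on the relevant abelian subcategory, extended by openness. To see that $\varphi$ is birational onto a component, I would invert it: applying $\ph$ to a general $\cF$ in the Brill--Noether locus and completing with the exceptional objects of the decomposition produces a monad whose cohomology is a rank--$2$ sheaf $F$ with $\phx(F)\cong\cF$; the Brill--Noether condition and the Chern-class bookkeeping pin down the exceptional summands uniquely, so $F\in\Mo(d)$ is recovered. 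Generic smoothness and the count $2d-11$ then come from deformation theory on $X$: one computes $\dim\Ext^1_X(F,F)=2d-11$ at the general point (the obstructions vanishing by stability together with the hypotheses on $F(-1)$) and checks that the differential of $\varphi$ maps this isomorphically onto the tangent space of the Brill--Noether locus at $\cF$.

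For part \ref{B} I specialize to $d=7$, where $\cF$ is a degree--$2$ line bundle and $\hh^0(\cV\ts\cF)\geq 1$ is then automatic, so $\varphi$ is a morphism $\Mo_X(2,1,7)\to\Pic^2(\Gamma)$ defined on the whole moduli space (for $c_2=m_9+1$ the needed vanishing survives even on the boundary). The heart of the matter is to locate where $\varphi$ fails to be an isomorphism, which should be exactly the locus of sheaves that are not globally generated. I would show that such an $F$ is a non-split extension built from a line $\ell\subset X$ (an elementary modification along $\ell$ of a globally generated bundle), so that for each line the ambiguity is a $\p^1$ of extension classes, all producing the same image $\phx(F)\in\Pic^2(\Gamma)$; this identifies the non--globally--generated locus with a $\p^1$--bundle over a curve $C$ isomorphic to $\sH^0_1(X)$, contracted by $\varphi$ onto $C\subset\Pic^2(\Gamma)$. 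Since $\Mo_X(2,1,7)$ is smooth of dimension $3$ and $\varphi$ is birational, an isomorphism off $C$, and contracts this $\p^1$--bundle to the smooth center $C$, the contraction/universal-property criterion for blow-ups identifies $\Mo_X(2,1,7)$ with the blow-up of $\Pic^2(\Gamma)$ along $C$, the exceptional divisor being precisely the non--globally--generated sheaves.

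The step I expect to be the main obstacle is the reconstruction that makes $\varphi$ invertible: controlling the residual terms of the semiorthogonal decomposition tightly enough to show that $F$ is uniquely and regularly recovered from $\phx(F)$. In part \ref{A} this is what turns a map of sets into a birational map with matching tangent spaces, and in part \ref{B} it is what upgrades birationality to a genuine isomorphism with the blow-up, since one must verify not only that the exceptional $\p^1$--bundle over $\sH^0_1(X)$ is contracted with the correct normal bundle, but also that $\varphi$ and its inverse are everywhere regular rather than merely rational.
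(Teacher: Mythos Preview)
Your overall strategy---compute $\phx(F)$ as a sheaf via stalk cohomology, then reconstruct $F$ through the semiorthogonal decomposition---matches the paper. But two steps in Part~A would not go through as you describe them. First, the Brill--Noether count $\hh^0(\Gamma,\cV\ts\cF)=d-6$ does not come from pulling $F$ back along the universal conic; the key input is the isomorphism $\cV^*\cong\phs(\UU^*)$ (Grothendieck duality applied to $\cV=q_*(p^*\UU\ts\EE)$), so that by adjunction $\HH^0(\Gamma,\cV\ts\cF)\cong\Hom_X(\UU^*,\ph(\cF))$. The residual part of the decomposition is governed by $\Ext^k_X(F,\OO_X)=0$ for all $k$ and $\Ext^k_X(F,\UU)=0$ for $k\neq 2$ with $\ext^2_X(F,\UU)=d-6$, which collapses the triangle to a short exact sequence $0\to A_F\ts\UU^*\xrightarrow{\zeta_F}\ph(\cF)\to F\to 0$ with $\dim A_F=d-6$. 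Applying $\Hom_X(\UU^*,-)$ and using $\Hom_X(\UU^*,F)=0$ (valid only for $d\ge 8$, which is why the theorem splits there) gives both the count and the identification of $\zeta_F$ with $\ph(e_{\cV^*,\cF})$; this is the reconstruction, and the Petri-type map that controls the tangent spaces is exactly $\Ext^1_\Gamma(e_{\cV^*,\cF},\cF)$. Second, stability of $\cF$ does not follow from any exactness property of $\phx$. The paper argues by induction on $d$: for $F$ built as the kernel of a general surjection $G\to\OO_L$ with $G$ general in $\Mo(d-1)$, applying $\phx$ gives a nonsplit extension of the (inductively) stable bundle $\phx(G)$ by the degree-$1$ line bundle $\phx(\OO_L)[-1]$, and a direct slope estimate forces $\cF$ stable; openness then propagates this to the general member.

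For Part~B your outline is close, but you assume smoothness of $\sH^0_1(X)$ and of $\Mo_X(2,1,7)$ and then appeal to a blow-up criterion; neither is available a priori (smoothness of $\Mo_X(2,1,7)$ is a \emph{consequence} of the theorem when $\sH^0_1(X)$ is smooth). The paper instead characterizes the non--globally--generated $F$ by the condition $\Hom_X(\UU^*,F)\neq 0$, which via the resolution above forces $0\to I\to F\to\OO_L(-1)\to 0$ with $I$ the image of a map $\UU^*\to F$. The fibre of $\varphi$ over $\psi([L])$ is then explicitly parametrized by $\p(A_L)$ with $A_L\cong\Hom_X(\UU,\cI_L)^*$, a $2$-dimensional space which is simultaneously and canonically identified with the fibre of the normal sheaf of $W$ in $\Pic^2(\Gamma)$ at $\psi([L])$. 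This fibrewise identification yields the blow-up structure directly, without any smoothness hypothesis.
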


In particular we prove that $\Mo_{X}(2,1,7)$ is an irreducible
threefold which is smooth as soon as $\sH^{0}_{1}(X)$ is smooth.
Note that this result closely resembles that of
\cite{druel:cubic-3-fold}, \cite{iliev-markushevich:cubic},
\cite{markushevich-tikhomirov}, regarding rank $2$ sheaves on a smooth
cubic threefold in $\p^4$, and relying on the
Abel-Jacobi mapping.

The paper is organized as follows.
In the following section we set up some notation.
Then, in Section \ref{sec:genus-9}, we review the geometry of prime
Fano threefolds $X$ of genus $9$, and we interpret some well-known facts
concerning lines and conics contained in $X$ in the language of vector bundles.
In Section \ref{sec:stable}, we state and prove part \eqref{A} of the theorem above.
Section \ref{sec:7} is devoted to part \eqref{B}.

\section{Definitions and preliminary results}

Given a smooth complex projective $n$-dimensional polarized variety
$(X,H_X)$ and a sheaf $F$ on $X$, we write $F(t)$ for $F\ts \OO_X(t H_X)$.
Given a subscheme $Z$ of $X$, we write $F_Z$ for $F\ts \OO_Z$ and
we denote by $\cI_{Z,X}$ the ideal sheaf of $Z$ in $X$, and by
$N_{Z,X}$ its normal sheaf. We will frequently drop
the second subscript.
Given a pair of sheaves $(F,E)$ on $X$, we will write $\ext_X^k(F,E)$
for the dimension of the
group $\Ext_X^k(F,E)$, and
similarly $\hh^k(X,F) = \dim \HH^k(X,F)$.
The Euler characteristic of $(F,E)$
is defined as $\chi(F,E)=\sum_k (-1)^k \ext_X^k(F,E)$ and
$\chi(F)$ is defined as $\chi(\OO_X,F)$.
We denote by $p(F,t)$ the Hilbert polynomial $\chi(F(t))$ of the sheaf
$F$.
The degree $\deg(L)$ of a divisor class $L$ is defined as
the degree of $L \cdot H_X^{n-1}$. The degree of a sheaf $F$ is
defined as $\deg(c_1(F))$. The dualizing sheaf of $X$ is
denoted by $\omega_{X}$.
Given two vector bundles $E,F$, we denote by $e_{E,F}$ the natural evaluation 
map 
\[
e_{E,F}:\Hom_X(E,F)\ts E \to F.
\]

If $X$ is a smooth $n$-dimensional subvariety of $\p^m$, whose coordinate ring is
Cohen-Macaulay, then $X$ is said to be arithmetically Cohen-Macaulay
(ACM).
A locally free sheaf $F$ on an ACM variety $X$ is called {\em ACM}
({\em arithmetically Cohen-Macaulay}) if it has no intermediate
cohomology, i.e. if $\HH^k(X,F(t))=0$ for all integer $t$ and for any
$0<k<n$. The corresponding module over the coordinate ring of $X$ is
thus a maximal Cohen-Macaulay module.

Let us now recall a few well-known facts about semistable sheaves on projective varieties.
We refer to the book \cite{huybrechts-lehn:moduli} for a more detailed account of these notions.
We recall that a torsionfree coherent sheaf $F$ on $X$ is (Gieseker) {\it semistable} if for
any coherent subsheaf $E$, with $0<\rk(E)<\rk(F)$,
one has $p(E,t)/\rk(E) \leq p(F,t)/\rk(F)$ for $t\gg 0$.
The sheaf $F$ is called {\it stable} if the inequality above is always strict.

The {\it slope} of a sheaf $F$ of positive rank is defined as
$\mu(F) = \deg(F)/\rk(F)$.
We recall that a torsionfree coherent sheaf $F$ is  {\it $\mu$-semistable} if for
any coherent subsheaf $E$, with $0<\rk(E)<\rk(F)$,
one has $\mu(E) < \mu(F)$.
The sheaf $F$ is called {\it $\mu$-stable} if the above inequality is always strict.
We recall that the {\it discriminant} of a sheaf $F$ is
$\Delta(F) = 2 r c_2(F) - (r-1) c_1(F)^2$, 
where $c_k(F) \in \HH^{k,k}(X)$ is the $k$-th Chern class of $F$.
Bogomolov's inequality, see for instance \cite[Theorem 3.4.1]{huybrechts-lehn:moduli},
states that if $F$ is also $\mu$-semistable, then we have:
\begin{equation}
\label{eq:bogomolov}
\Delta(F)\cdot H_X^{n-2}\geq 0.
\end{equation}


We introduce here some notation concerning moduli spaces.
We denote by $\Mo_X(r,c_1,\ldots,c_n)$ the moduli space of
$S$-equivalence classes of rank $r$
torsionfree semistable sheaves on $X$ with Chern classes $c_1,\ldots,c_n$.
The Chern class $c_k$ will be
denoted by an integer as soon as $\HH^{k,k}(X)$ has dimension $1$.
We will drop the last values of the classes $c_k$ when they are
zero.
The moduli space of $\mu$-semistable sheaves is denoted by
$\Mo^{\mu}_X(r,c_1,\ldots,c_n)$.

Let us review some notation concerning the Hilbert scheme.
Given a numerical polynomial $p(t)$, we let $\Hilb_{p(t)}(X)$ be the
{\it Hilbert scheme} of closed subschemes of $X$ with Hilbert polynomial
$p(t)$.
In case $p(t)$ has degree one, we let $\sH^g_d(X)$ be the union of
components of $\Hilb_{p(t)}(X)$ containing integral
curves of degree $d$ and arithmetic genus $g$.

As a basic technical tool, we will use the bounded derived category.
Namely, given a smooth complex projective variety $X$, we will
consider the derived category $\D(X)$ of complexes of sheaves on $X$
with bounded coherent cohomology.
For definitions and notation we refer to
\cite{gelfand-manin:homological} and \cite{weibel:homological}.
In particular we write $[j]$ for the $j$-th shift to the right in the
derived category.


\smallskip

Let now $X$ be a smooth projective variety of dimension $3$.
Recall that $X$ is called {\it Fano} if its anticanonical divisor
class $-K_X$ is ample.
A Fano threefold  $X$ is {\it prime} if its Picard group is generated by the class of
$K_X$. These varieties are classified up to deformation, see for
instance \cite[Chapter IV]{fano-encyclo}. The number of
deformation classes is $10$, and they are characterized by the
{\it genus}, which is the integer $g$ such that
$\deg(X)=-K_X^3=2\, g-2$. Recall that the genus of a prime Fano
threefold take values in $\{2,\ldots,10,12\}$.

If $X$ is a prime Fano threefold of genus $g$, the Hilbert scheme
$\sH^{0}_{1}(X)$ of lines contained in $X$ is a scheme of pure
dimension $1$.
The threefold $X$ is said to be {\em exotic} if the Hilbert scheme
$\sH^{0}_{1}(X)$ contains a component which is nonreduced at any point.
It turns out that no threefold of genus $9$ is exotic, see
\cite{gruson-laytimi-nagaraj}. In particular the
normal bundle of a general line $L\subset X$ splits
as $\OO_{L} \oplus \OO_{L}(-1)$. 
It is well-known that, if $X$ is general, then the scheme
$\sH^{0}_{1}(X)$ is a smooth irreducible curve. 

Recall also that a smooth projective surface $S$ is a {\it K3 surface} if
it has trivial canonical bundle and irregularity zero.

Remark that the cohomology groups $\HH^{k,k}(X)$ of a prime Fano
threefold $X$ of genus $g$ are generated by the
divisor class $H_X$ (for $k=1$), the class $L_X$ of a line contained
in $X$ (for $k=2$), the class $P_X$ of a closed point of $X$ (for $k=3$).
Hence we will denote the Chern classes of a sheaf on $X$ by the integral
multiple of the corresponding generator. Recall that $H_X^2=(2\,g-2)L_X$.
We use an analogous notation on
a K3 surface $S$ of genus $g$.

We recall by \cite[Part II, Chapter 6]{huybrechts-lehn:moduli} that,
given a stable sheaf $F$ of rank $r$ on a K3 surface $S$ of sectional genus
$g$, with Chern classes $c_1,c_2$,
the dimension at $[F]$ of the moduli space $\Mo_S(r,c_1,c_2)$ is:
\begin{equation}
  \label{eq:dimension}
  \Delta(F) - 2\, (r^2-1).
\end{equation}

We recall finally the formula of Hirzebruch-Riemann-Roch, in the case
of prime Fano threefolds of genus $9$. Let $F$
be a rank $r$ sheaf on a prime Fano threefold $X$ of
genus $9$ with Chern classes $c_1,c_2,c_3$.
Then we have:
\begin{align*}
\chi(F) & = r + \frac{10}{3}\,c_1 +4\, c_1^2
-\frac{1}{2} c_2 +  \frac{8}{3}\,c_1^3
-\frac{1}{2}\,c_1\,c_2+\frac{1}{2}\,c_3.
\end{align*}

\section{Geometry of  prime Fano threefolds of genus 9} \label{sec:genus-9}

Throughout the paper we will denote by $X$ a smooth prime Fano threefold of
genus $9$.
In this section, we briefly sketch some of the basic features of $X$.
For a detailed account on the geometry of these varieties, and the
related ${\sf Sp}(3)$-geometry, we refer to the papers \cite{mukai:curves-K3},
\cite{mukai:biregular}, \cite{iliev:sp3}, \cite{iliev-ranestad}.
The divisor class $H_X$ embeds $X$ in $\p^{10}$ as an ACM variety.
It is well known that a general hyperplane section $S$ of $X$ is a smooth
K3 surface polarized by the restriction $H_S$ of $H_X$ to $S$,
with Picard number $1$ and sectional genus $9$.

By a result of Mukai, the threefold $X$ is isomorphic to a
$3$-codimensional linear section of the
Lagrangian Grassmannian $\Sigma$ of $3$-dimensional subspaces of a
$6$-dimensional vector space $V$ which are isotropic with respect to a
skew-symmetric $2$-form $\omega$.
The manifold $\Sigma$ is homogeneous for the complex Lie group ${\sf{Sp}}(3)$,
which acts on $V$ preserving $\omega$.
The Lie algebra of this group has dimension $21$, its Dynkin diagram
is of type ${\sf C}_{3}$ and the manifold $\Sigma$ is
${\sf{Sp}}(3)/{\sf P}(\alpha_{3})$.
In fact, $\Sigma$ is a Hermitian symmetric space.
It is equipped with a universal homogeneous rank $3$ subbundle $\UU$,
and we still denote by $\UU$ its restriction to $X$.
We have the universal exact sequence:
\begin{equation}
  \label{eq:universal-9}
  0 \to \UU \to V\ts \OO_X \to \UU^* \to 0.
\end{equation}

Let us review the properties of the vector bundle $\UU$. Its Chern
classes satisfy $c_1(\UU)=-1$, $c_2(\UU)=8$, $c_3(\UU)=-2$. The
bundle $\UU$ is exceptional by \cite{kuznetsov:hyperplane}.
Moreover, we have the following lemma.

\begin{lem} \label{stabU}
  The bundle $\UU$ is stable and ACM. The same is true for its
  restriction $\UU_{S}$ to a smooth hyperplane section surface $S$ with
  $\Pic(S)=\langle H_{S} \rangle$.
\end{lem}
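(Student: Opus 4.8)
The plan is to deduce everything from the homogeneous geometry of the ambient sixfold $\Sigma = {\sf Sp}(3)/{\sf P}(\alpha_3)$, where $\UU$ is the restriction of an irreducible homogeneous bundle, and then to descend to the linear section $X$ and afterwards to the hyperplane section $S$. First I would establish that $\UU$ is ACM on $\Sigma$, together with the two vanishings $\HH^0(\Sigma,\UU)=0$ and $\HH^0(\Sigma,\wedge^2\UU)=0$: since $\UU$ is an irreducible homogeneous bundle on a rational homogeneous space, all of its twisted cohomology is governed by Bott's theorem, and the required statements reduce to checking that the relevant weights are singular or antidominant. Because $X=\Sigma\cap H_1\cap H_2\cap H_3$ is a codimension-$3$ linear section cut out by a regular sequence, I would then tensor the Koszul complex
\[
0 \to \OO_\Sigma(-3) \to \OO_\Sigma(-2)^{\oplus 3} \to \OO_\Sigma(-1)^{\oplus 3} \to \OO_\Sigma \to \OO_X \to 0
\]
with $\UU(t)$ and run the associated hypercohomology spectral sequence. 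As $\Sigma$ is ACM in its minimal embedding and $\UU$ is ACM on $\Sigma$, the intermediate cohomology of every term $\UU(t-i)$ vanishes, and I can conclude both that $\UU_X$ has no intermediate cohomology on $X$, i.e. that $\UU$ is ACM, and that $\HH^0(X,\UU(t))$ is computed by the global sections upstairs for the two twists I need below.

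For stability I would invoke Hoppe's criterion, available because $\Pic(X)=\langle H_X\rangle$: a rank-$3$ bundle is $\mu$-stable as soon as $\HH^0(X,\UU)=0$ and $\HH^0(X,\wedge^2\UU)=0$ (after the standard normalization of the twist). Concretely, writing first Chern classes as integer multiples of $H_X$, so that $c_1(\UU)=-1$ and $\rk(\UU)=3$, any rank-$1$ subsheaf $E\subset\UU$ is also a subsheaf of $V\ts\OO_X$ and hence has $c_1(E)\le 0$; since $\mu$-destabilizing requires $c_1(E)\ge 0$, such an $E$ forces $c_1(E)=0$ and thus a nonzero section of $\UU$. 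Likewise a rank-$2$ destabilizing subsheaf has $c_1(E)\ge 0$, and passing to determinants produces a nonzero map $\OO_X(c_1(E))\to\wedge^2\UU\cong\UU^*(-1)$, hence (multiplying by a section of $\OO_X(c_1(E))$ to reduce to the case $c_1(E)=0$) a nonzero section of $\wedge^2\UU$. Both outcomes are excluded by the vanishings above; the first one also has a transparent geometric reading, namely that the isotropic $3$-planes parametrized by the points of $X$ admit no common nonzero vector of $V$. Thus $\UU$ is $\mu$-stable, a fortiori stable.

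Finally, $S=X\cap H$ is a smooth hyperplane section with $\Pic(S)=\langle H_S\rangle$, so the same criterion applies to the rank-$3$ bundle $\UU_S$. I would feed the restriction sequences $0\to\UU(t-1)\to\UU(t)\to\UU_S(t)\to 0$, and its exterior-square analogue, into the long exact cohomology sequence. The ACM property of $\UU$ on $X$ at once gives $\HH^1(S,\UU_S(t))=0$ for all $t$, i.e. $\UU_S$ is ACM; and the vanishings $\HH^0(X,\UU)=\HH^0(X,\wedge^2\UU)=0$, together with $\HH^1(X,\UU(-1))=\HH^1(X,\wedge^2\UU(-1))=0$ (which hold since $\UU$, and hence $\UU^*$ by Serre duality, is ACM on $X$), yield $\HH^0(S,\UU_S)=\HH^0(S,\wedge^2\UU_S)=0$, whence the stability of $\UU_S$. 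The main obstacle I anticipate lies entirely in the first step: one must carry out the Bott computation on $\Sigma$ over the full range of twists entering the Koszul complex, and track the spectral sequence carefully enough to guarantee that the intermediate cohomology on $X$ vanishes for every $t$, not merely for the two twists needed to run Hoppe's criterion.
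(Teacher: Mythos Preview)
Your proposal is correct and follows essentially the same route as the paper: Bott's theorem on $\Sigma$, descent to $X$ via the Koszul complex to obtain the ACM property, Hoppe's criterion for stability, and then the restriction sequence $0\to\UU(t-1)\to\UU(t)\to\UU_S(t)\to 0$ to pass to $S$. The only cosmetic difference is that the paper obtains $\HH^0(X,\wedge^2\UU)=0$ via the isomorphism $\wedge^2\UU\cong\UU^*(-1)$ and Serre duality rather than directly from Bott, and it cites Hoppe's criterion rather than unpacking it as you do.
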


\begin{proof}
Consider the Koszul complex:
\[
0 \to \wedge^{3} B \ts \OO_{\Sigma}(-3) \to \cdots \to B \ts
\OO_{\Sigma}(-1) \to \OO_{\Sigma} \to \OO_{X} \to 0,
\]
and tensor it with $\UU$.

By Bott's theorem  we know that, for any integer $t$, the
homogeneous vector bundles $\UU(t)$ on $\Sigma$ have natural
cohomology. Using Riemann-Roch's formula on $\Sigma$, we get
$\chi(\UU(-t))=0$, for $0\le t\le 3$.
We obtain:
\[
 \HH^k(\Sigma,\UU(-t))=0, \qquad \mbox{for} \qquad
 \left\{
 \begin{array}{l}
   \mbox{all $k$ and $0\le t\le 3$,} \\
   \mbox{$k \neq 0$ and $t\le-1$,} \\
   \mbox{$k \neq 6$ and $t\ge4$.}
 \end{array}
 \right.
\]

It easily follows that $\UU$ is ACM on $X$. Since $\wedge^2 \UU \cong
\UU^*(-1)$, by Serre duality we get $\HH^0(X,\wedge^2 \UU)=0$, so
$\UU$ is stable by Hoppe's criterion,
see \cite[Lemma 2.6]{hoppe:rang-4}, or
\cite[Theorem 1.2]{ancona-ottaviani:special}.

To check the statement on $S$, consider the defining exact sequence:
\begin{equation}
  \label{eq:sectionS}
  0 \to \OO_{X}(-1) \to \OO_{X} \to \OO_{S} \to 0.
\end{equation}

Since $\UU$ is ACM on $X$,
tensoring \eqref{eq:sectionS} by $\UU(-t)$, and using
$\HH^{0}(X,\UU)=0$, we get:
\begin{align*}
& \HH^{1}(S,\UU(t))=0,  && \mbox{for $t\geq 0$}, && \mbox{and}\quad \HH^{0}(S,\UU)=0.
\intertext{Tensoring \eqref{eq:sectionS} by $\UU^{*}(-t)$, recalling that we have proved
$\HH^{0}(X,\UU^{*}(-1))=0$, and that $\UU$ is ACM on $X$, making use of Serre duality we obtain:}
& \HH^{1}(S,\UU(t))=0, && \mbox{for $t\geq 1$}, && \mbox{and}\quad \HH^{0}(S,\UU^{*}(-1))=0.
\end{align*}

This proves that the bundle $\UU_{S}$ is ACM and that it is stable
again by Hoppe's criterion.
\end{proof}

\subsection{Universal bundles and the decomposition of the derived category}

Here we review the structure of the derived category of a smooth prime
Fano
threefold $X$ of genus $9$, in terms of the semiorthogonal
decomposition provided by \cite{kuznetsov:hyperplane}. We will
need to interpret this decomposition in terms of the universal
vector bundle of the moduli space $\Mo_{X}(2,1,6)$. In view of the
results of \cite{iliev-ranestad}, and recalling \cite[Lemma
3.4]{brambilla-faenzi:ACM}, the moduli space $\Mo_X(2,1,6)$ is
fine and isomorphic to a smooth plane quartic curve $\Gamma$. This
curve can be obtained as an orthogonal linear section of $\Sigma$ and
is also called the homologically projectively dual curve to $X$.
Let us denote by $\EE$ the universal vector bundle for the moduli
space $\Mo_X(2,1,6)= \Gamma$.
It is defined on $X \times \Gamma$, and we denote by $p$ and $q$ respectively the projections to $X$
and $\Gamma$.

We have the integral functor $\ph$ associated to $\EE$, and its
right and left adjoint functors $\phx$ and $\phs$, which are
defined by the formulas:
\begin{align}
& \label{phi} \ph :  \D(\Gamma) \rr \D(X), && \ph(-) = \RR p_*(q^*(-) \ts \EE), \\
& \label{phi!} \phx : \D(X) \rr \D(\Gamma), && \phx(-) = \RR q_*(p^*(-) \ts \EE^* (\omega_{\Gamma}))[1], \\
& \label{phi*} \phs : \D(X) \rr \D(\Gamma), && \phs(-) = \RR q_*(p^*(-) \ts \EE^* (-H_X))[3].
\end{align}

The topological invariants of $\EE$ are the following:
\[
c_1(\EE)=H_X+N, \qquad c_2(\EE)=6 L_X + H_X M+\eta,
\]
where $N$ and $M$ are divisor classes on $\Gamma$, and $\eta$ sits
in $\HH^{3}(X,\C)\ts \HH^{1}(\Gamma,\C)$.

\begin{lem}
  We have $\eta^{2}=6$ and $\deg(N)=2\deg(M)-1$.
\end{lem}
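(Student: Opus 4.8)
The plan is to exploit that $\ph$ is fully faithful with right adjoint $\phx$, so that $\phx\circ\ph\cong\mathrm{id}_{\D(\Gamma)}$, and to extract numerical consequences by Grothendieck--Riemann--Roch applied to the projections $p,q$ from $X\times\Gamma$. First I would write out the K\"unneth components of $\mathrm{ch}(\EE)$. Using $c_1(\EE)=H_X+N$, $c_2(\EE)=6L_X+H_XM+\eta$, the relations $H_X^2=16L_X$ and $H_X\cdot L_X=P_X$, together with $\HH^{3}(\Gamma)=0$, $\HH^{\ge 4}(\Gamma)=0$ and $\HH^{5}(X)=0$, most mixed products vanish. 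The point to record is that $\eta$ survives only in $\mathrm{ch}_2(\EE)$, whereas $\eta^2$ reappears, paired against $P_X$, precisely in the top K\"unneth piece $\mathrm{ch}_4(\EE)\in\HH^{6}(X)\otimes\HH^{2}(\Gamma)$, while $\deg N$ and $\deg M$ enter all the mixed pieces linearly.

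Next I would apply $\phx(\ph(\OO_t))\cong\OO_t$ for the skyscraper at $t\in\Gamma$. Since $\ph(\OO_t)$ is the stable sheaf $F_t=\EE|_{X\times\{t\}}$, whose Chern character is fixed (with $\rk=2$, $c_1=1$, $c_2=6$ and no odd part), the entire dependence on $N,M,\eta$ is carried by the kernel $\EE^{*}(\omega_\Gamma)$ of $\phx$. I would then compute $\mathrm{ch}(\phx(F_t))$ by GRR for $q$, namely $-\,\mathrm{ch}(\omega_\Gamma)\cdot q_{*}\!\big(p^{*}(\mathrm{ch}(F_t)\,\mathrm{td}(T_X))\cdot\mathrm{ch}(\EE^{*})\big)$, reading $\mathrm{td}(T_X)=1+\tfrac12H_X+\tfrac{10}{3}L_X+P_X$ off the displayed Hirzebruch--Riemann--Roch formula. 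Matching against $\mathrm{ch}(\OO_t)$: the rank-zero condition is automatic (it amounts to $\chi(F_t,F_t)=0$), while the degree-one condition in $\HH^{2}(\Gamma)$ yields, after simplification, the single relation $\eta^{2}+12\deg N-24\deg M=-6$, i.e. $\eta^{2}-6=-12(\deg N-2\deg M+1)$. Hence the two assertions of the lemma are \emph{equivalent}, and only one invariant remains to be pinned down.

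The main obstacle is exactly this last step. Twisting by $q^{*}L$ changes $(\deg N,\deg M)$ by $(2\deg L,\deg L)$ while leaving $\eta^{2}$ and the combination $\deg N-2\deg M$ fixed, and every Euler-pairing relation coming from fully faithfulness is invariant under this twist; so numerics alone cannot go beyond the equation above, and one must inject genuine geometric input. One clean route is Hodge-theoretic: the class $\eta$ induces the isomorphism $\HH^{1}(\Gamma)\xrightarrow{\sim}\HH^{3}(X)$, $\alpha\mapsto p_{*}(q^{*}\alpha\cdot\eta)$, underlying $\ph$ (the even Todd factors cannot affect this graded piece), and this map is an isometry for the cup-product polarizations, realizing the identification of the principally polarized intermediate Jacobian of $X$ with $(J(\Gamma),\Theta)$ as in Iliev--Ranestad and Kuznetsov; compatibility of the principal polarizations then forces $\eta^{2}=2\cdot 3=6$, whence $\deg N=2\deg M-1$. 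Alternatively, one computes $\deg N=\deg(\det\EE|_{\{x\}\times\Gamma})$ and $\deg M$ directly from an explicit model of $\EE$, using the conic-bundle description $\p(\cV)\cong\sH^{0}_{2}(X)$, and then reads off $\eta^{2}=6$ from the displayed relation.
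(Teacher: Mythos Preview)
Your setup via $\phx(\EE_y)\cong\OO_y$ and Grothendieck--Riemann--Roch matches the paper's, and your observation that this yields only the single relation $\eta^{2}+12(\deg N-2\deg M)=-6$, invariant under twisting $\EE$ by line bundles on $\Gamma$, is correct. Where you diverge is in how you obtain the second constraint.

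You conclude that numerics coming from full faithfulness cannot go further and reach for the Hodge-theoretic identification of the intermediate Jacobian of $X$ with $J(\Gamma)$ as principally polarized abelian varieties, to force $\eta^{2}=6$. That route is valid in principle but is considerably heavier than the problem requires, and the step ``compatibility of the principal polarizations forces $\eta^{2}=2\cdot3$'' would itself need a careful argument relating the cup-product forms to the scalar $\eta^{2}$.

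The paper instead extracts a second, entirely elementary equation from the vanishing $\phx(\OO_X)=0$. This is not a consequence of full faithfulness of $\ph$, which is why your twist-invariance argument does not detect it; rather, it comes from the semiorthogonal decomposition $\D(X)=\langle\OO_X,\UU^*,\ph(\D(\Gamma))\rangle$, which gives $\Hom_X(\ph(\D(\Gamma)),\OO_X)=0$ and hence $\phx(\OO_X)=0$ by adjunction. Concretely this is the vanishing $\HH^{k}(X,\EE_{y}^{*})=0$ for all $k$ and all $y\in\Gamma$, which the paper cites from \cite[Proposition~3.5]{brambilla-faenzi:genus-7}. Feeding $\chi(\phx(\OO_X))=0$ into GRR produces a second linear relation in the twist-invariant unknowns $\eta^{2}$ and $\deg N-2\deg M$, independent of yours, and solving the pair gives $\eta^{2}=6$ and $\deg N=2\deg M-1$ directly. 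So the ``genuine geometric input'' you sought is already present in the semiorthogonal structure, and no appeal to intermediate Jacobians or to an explicit model of $\EE$ is needed.
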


\begin{proof}
  Recall that $\EE$ is the universal bundle for $\Mo_{X}(2,1,6)$,
  and write $\EE_{y}$ for the bundle on $X$ corresponding to
  the point $y\in \Gamma$.
  By \cite[Lemma 3.3]{brambilla-faenzi:ACM}, we have
  $\Ext^2_{X}(\EE_{y},\EE_{z})=0$, for all $y,z \in \Gamma$.
  Moreover $\Ext^3_{X}(\EE_{y},\EE_{z})=0$, for all $y,z \in \Gamma$, 
by Serre duality and stability. 
Thus by Riemann-Roch formula it easily follows:
  \begin{align*}
    & \hom_{X}(\EE_{y},\EE_{y}) = \ext^{1}_{X}(\EE_{y},\EE_{y}) = 1, &&
    \mbox{for all $y\in \Gamma$,} \\
    & \hom_{X}(\EE_{y},\EE_{z}) = \ext^{1}_{X}(\EE_{y},\EE_{z}) = 0,
    && \mbox{for all $y\neq z \in \Gamma$.}
  \end{align*}

  This gives $\phx(\EE_{y}) \cong \OO_{y}$.
  By \cite[Proposition 3.5]{brambilla-faenzi:genus-7}, for any $y \in \Gamma$, the
  bundle $\EE_{y}$ satisfies:
  \[
  \HH^{k}(X,\EE^{*}_{y}) = 0, \qquad \mbox{for all $k \in \Z$},
  \]
  hence we have $\phx(\OO_{X})=0$.
  Plugging the equations $\chi(\phx(\OO_{X}))=0$ and
  $\chi(\phx(\EE_{y}))=1$
  into Grothendieck-Riemann-Roch's formula,
  we get our claim.
\end{proof}

By Kuznetsov's theorem, \cite{kuznetsov:hyperplane}, we have the
semiorthogonal decomposition:
\[
\D(X) = \langle \OO_X, \UU^*, \thx(\D(\Gamma)) \rangle,
\]
where $\thx$ is the integral functor associated to a sheaf
$\sF$ on $X \times \Gamma$, flat over $\Gamma$.
We would like to see that $\thx$ actually agrees with $\ph$.
We do this in a rather indirect way, in the following lemma.

\begin{lem}
  The sheaf $\sF$ is isomorphic to (a twist) of $\EE$.
\end{lem}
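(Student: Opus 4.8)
The plan is to show that the two fully faithful Fourier--Mukai functors $\ph$ and $\thx$ have the \emph{same image} inside $\D(X)$, and then to exploit the rigidity of derived autoequivalences of the genus-$3$ curve $\Gamma$. First I would check that $\ph(\D(\Gamma))$ is contained in the left orthogonal ${}^{\perp}\langle \OO_X,\UU^*\rangle$, which is exactly the last factor $\thx(\D(\Gamma))$ of Kuznetsov's decomposition. Since the skyscrapers $\OO_y$ generate $\D(\Gamma)$ and $\ph(\OO_y)=\EE_y$, it suffices to verify $\RRHom_X(\EE_y,\OO_X)=0$ and $\RRHom_X(\EE_y,\UU^*)=0$ for every $y\in\Gamma$. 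The first vanishing follows from the computation already recorded, namely $\HH^k(X,\EE_y^*)=0$ for all $k$: since $\EE_y$ has rank $2$ and $c_1(\EE_y)=1$ one has $\EE_y^*\cong\EE_y(-1)$, and Serre duality with $\omega_X\cong\OO_X(-1)$ identifies $\Ext^k_X(\EE_y,\OO_X)$ with $\HH^{3-k}(X,\EE_y^*)^*$. The second vanishing $\Ext^\bullet_X(\EE_y,\UU^*)=0$ is a genuine cohomological computation, which I would extract from the universal sequence \eqref{eq:universal-9} together with the ACM and stability properties of $\UU$ and $\EE_y$; I expect this to be the main technical obstacle.

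This gives the inclusion $\ph(\D(\Gamma))\subseteq\thx(\D(\Gamma))$. To upgrade it to an equality I would use that $\ph(\D(\Gamma))$ is an admissible subcategory, being the image of a fully faithful functor admitting both adjoints, while $\thx(\D(\Gamma))\cong\D(\Gamma)$ is \emph{indecomposable}: as $\Gamma$ is a smooth curve of genus $3$ its canonical bundle is ample, so $\D(\Gamma)$ admits no nontrivial semiorthogonal decomposition. A proper admissible subcategory would produce one, a contradiction; hence $\ph(\D(\Gamma))=\thx(\D(\Gamma))$.

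With equal images the composite $G:=\phx\circ\thx\colon\D(\Gamma)\to\D(\Gamma)$ is an autoequivalence, since $\phx$ restricts to a quasi-inverse of $\ph$ on their common image; moreover $\ph\circ G\cong\thx$, because $\ph\circ\phx$ acts as the identity on $\thx(\D(\Gamma))$. Now $G$ is a Fourier--Mukai autoequivalence of $\D(\Gamma)$, and by Orlov's classification (valid since $\omega_\Gamma$ is ample) it has the form $G\cong (-\ts L)\circ\sigma^*\circ[n]$ for some $L\in\Pic(\Gamma)$, $\sigma\in\Aut(\Gamma)$ and $n\in\Z$. Passing to kernels, $\sF$ becomes isomorphic to $(\mathrm{id}_X\times\sigma)^*\EE\ts q^*L[n]$.

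It remains to normalize $n$ and $\sigma$. Since both $\sF$ and $\EE$ are sheaves flat over $\Gamma$, the objects $\thx(\OO_y)=\sF_y$ and $\ph(\OO_y)=\EE_y$ are honest sheaves concentrated in degree $0$; comparing with $\ph(G(\OO_y))\cong\EE_{\sigma^{-1}(y)}[n]$ forces $n=0$. The automorphism $\sigma$ is then absorbed by the normalization identifying $\Gamma$ with $\Mo_X(2,1,6)$ via $\EE$ — equivalently, by the relation $\phx(\EE_y)\cong\OO_y$ established in the previous lemma, which fixes the correspondence $y\leftrightarrow\EE_y$ — so that $\sF\cong\EE\ts q^*L$, a twist of $\EE$. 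I expect the orthogonality $\Ext^\bullet_X(\EE_y,\UU^*)=0$ of the first step to be the crux of the argument, with the removal of $\sigma$ a secondary bookkeeping point.
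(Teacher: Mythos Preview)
Your argument is correct and the strategy is sound, but it takes a genuinely different route from the paper. The paper proceeds \emph{geometrically}: from Kuznetsov's appendix one has an explicit four-term resolution $0\to\OO_X\to\UU^*\to\sF_y\to\OO_Z\to0$ with $Z$ the intersection of a $3$-dimensional quadric in $\Sigma$ with a codimension-$2$ linear section; since $X$ contains no planes or $2$-dimensional quadrics, $Z$ is a conic. A Chern-class count then shows $\sF_y$ is torsionfree with $(c_1,c_2,c_3)=(1,6,0)$, and stability is checked directly, so each $\sF_y$ lies in $\Mo_X(2,1,6)\cong\Gamma$. The universal property of the fine moduli space produces a classifying map $f\colon\Gamma\to\Gamma$ with $\sF\cong(\mathrm{id}\times f)^*\EE\otimes q^*L$, and full faithfulness of $\thx$ forces $f$ to be nonconstant, hence an automorphism (Riemann--Hurwitz, $g(\Gamma)=3$).

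Your categorical approach trades this explicit geometry for the orthogonality $\Ext^\bullet_X(\EE_y,\UU^*)=0$, which is exactly Lemma~\ref{E*U} (placed after the present lemma in the paper, but logically independent of it, so no circularity). What you gain is a clean, transportable template---equal images via indecomposability of $\D(\Gamma)$, then Bondal--Orlov rigidity---that would work verbatim for other HPD curves. What the paper's argument buys is the concrete resolution of $\sF_y$ and an avoidance of the $\Ext^\bullet(\EE_y,\UU^*)$ computation at this stage. Note also that the paper, like you, does not actually eliminate the automorphism: it declares ``we are done'' once $f$ is known to be an isomorphism, tacitly absorbing it into the identification of $\Gamma$ with the moduli space, just as you suggest.
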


\begin{proof}
  It follows by \cite[Appendix A]{kuznetsov:hyperplane} that $\sF_{y}$
  fits into a long exact sequence:
  \[
  0 \to \OO_{X} \to \UU^{*} \to \sF_{y} \to \OO_{Z} \to 0,
  \]
  where $Z$ is the intersection of a 3-dimensional quadric contained in
  $\Sigma$ with a codimension $2$ linear section of $\Sigma$.
  Note that $\sF_{y}$ is torsionfree of rank $2$.
  Since $X$ does not contain planes or $2$-dimensional quadrics, $Z$ must be a conic.
  Therefore, we have $c_1(\OO_Z)=0$, $c_2(\OO_Z)=-2$, $c_3(\OO_Z)=0$.
  Thus we calculate $c_1(\sF_y)=1$, $c_2(\sF_y)=6$, $c_3(\sF_y)=0$, and we easily check that $\sF_y$
  is a stable sheaf, i.e. $\sF_y$ sits in $\Mo_{X}(2,1,6)$. Note that, by \cite[Proposition 3.5]{brambilla-faenzi:genus-7},
   $\sF_{y}$ must be a vector bundle. Since $\EE$ is a universal vector bundle for the fine moduli space
   $\Gamma = \Mo_X(2,1,6)$, we have thus that $\sF$ is the twist by a line bundle on $\Gamma$ of a pull-back of $\EE$ via a map
   $f: \Gamma \to \Gamma$.

  Note that if $f$ is not constant, then it is an isomorphism and we are done.
  Now, in view of \cite{bridgeland:fourier-mukai}, it is easy to prove that $f$ is not constant
  since $\thx$ is fully faithful.
  Indeed, the sheaf $\sF$ must satisfy:
  \begin{align}
    \nonumber & \Ext^{k}_{X}(\sF_{y},\sF_{z}) = 0,  && \mbox{for all $k$ if $y\neq z \in \Gamma$.}
  \end{align}
 But if $f$ was constant, we would have $\hom_{X}(\sF_{y},\sF_{z}) = 1$, for any $y, z \in \Gamma$.
\end{proof}

The semiorthogonal decomposition of $\D(X)$ can be thus rewritten as:
\begin{equation}
  \label{eq:DX}
  \D(X) = \langle \OO_X, \UU^*, \ph(\D(\Gamma)) \rangle.
\end{equation}

Then, given a sheaf $F$ over $X$, we have a functorial exact triangle:
\begin{equation}\label{triangolo}
\ph(\phx(F)) \to F \to \mathbf{\Psi} (\mathbf{\Psi^*}(F)),
\end{equation}
where $\mathbf{\Psi}$ is the inclusion of the subcategory
$\langle \OO_X, \UU^*_+ \rangle$ in $\D(X)$ and $\mathbf{\Psi^*}$ is the left
adjoint functor to $\mathbf{\Psi}$.
The $k$-th term of the complex $\mathbf{\Psi} (\mathbf{\Psi^*}(F))$ can be
written as follows:
\begin{equation}
  \label{quadrato}
  (\mathbf{\Psi} (\mathbf{\Psi^*}(F)))^k \cong
  \Ext_X^{-k}(F,\OO_X)^*\ts \OO_X\oplus \Ext_X^{1-k}(F,\UU_+)^*\ts \UU_+^*.
\end{equation}

\begin{rmk} \label{convenzione}
  The universal bundle $\EE$ is determined up to twisting by the
  pull-back of a line bundle on $\Gamma$.
  In order to simplify some computations, we adopt the convention:
  \[
  \deg(N)=\deg(\EE_{x})=5.
  \]
\end{rmk}

\begin{rmk}
Making use of mutations, one can easily write down the following
semiorthogonal decomposition of $\D(X)$:
\begin{equation}
  \label{eq:DX'}
  \D(X) = \langle  \pho(\D(\Gamma)) , \UU , \OO_X \rangle,
\end{equation}
where $\pho : \D(\Gamma) \rr \D(X)$ is defined as $\pho = \RR p_*(q^*(-)
\ts \EE(-H_{X}))$. Let $\phos$ be the left adjoint of the functor $\pho$.

Let $q_{1}$ and $q_{2}$ be the projections of $X \times X$ onto the
two factors, and denote by $\mathbf{U}$ the complex on $X \times X$
defined by the natural map $\UU \boxtimes \UU \to \OO_{X \times X}$,
where $\OO_{X \times X}$ has cohomological degree $0$.
Then the projection onto the subcategory $\langle \UU , \OO_X \rangle$
is given by the functor $\RR q_{2*} (q_{1}^{*}(-) \ts \mathbf{U})$.
\end{rmk}

\begin{lem} \label{pallone}
  We have the natural isomorphisms:
  \begin{align*}
    & \cH^{0}(\ph(\phs(\UU^{*}))) \cong \UU^{*}, \\
    & \cH^{1}(\ph(\phs(\UU^{*}))) \cong \UU(1).
  \end{align*}
\end{lem}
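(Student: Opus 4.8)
The plan is to recognize $\ph\circ\phs$ as the projection functor onto the admissible subcategory $\ph(\D(\Gamma))$ and to realize $\ph(\phs(\UU^*))$ through an explicit decomposition triangle whose outer terms I can identify by hand. Since $\phs$ is left adjoint to $\ph$, the unit of adjunction produces, for every $F$, a canonical triangle
\[
Q_F \to F \to \ph(\phs(F)) \to Q_F[1],
\]
with $\ph(\phs(F)) \in \ph(\D(\Gamma))$ and $Q_F \in {}^{\perp}\ph(\D(\Gamma))$; this is exactly the semiorthogonal decomposition of $F$ coming from $\D(X) = \langle \ph(\D(\Gamma)), {}^{\perp}\ph(\D(\Gamma))\rangle$. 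By uniqueness of this decomposition it suffices to exhibit \emph{one} distinguished triangle $A \to \UU^* \to B$ with $A \in {}^{\perp}\ph(\D(\Gamma))$ and $B \in \ph(\D(\Gamma))$; then $B \cong \ph(\phs(\UU^*))$ automatically and the cohomology sheaves are read off from the triangle.

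First I would produce the candidate left-hand term. From the second semiorthogonal decomposition \eqref{eq:DX'}, together with the projection-formula identity $\pho(-) \cong \ph(-)(-H_X)$, the semiorthogonality $\Hom_X^{\bullet}(\UU, \pho(\D(\Gamma))) = 0$ translates into $\Hom_X^{\bullet}(\UU(1), \ph(\D(\Gamma))) = 0$, i.e. $\phs(\UU(1)) = 0$. Hence $\UU(1)$, and with it the shift $\UU(1)[-2]$, lies in ${}^{\perp}\ph(\D(\Gamma))$; this is the object I put in the left slot. For the right-hand term I use $\ph(\D(\Gamma)) = {}^{\perp}\langle \OO_X, \UU^*\rangle$ (from \eqref{eq:DX}), so that membership in $\ph(\D(\Gamma))$ is checked by the two vanishings $\RRHom_X(B,\OO_X)=0$ and $\RRHom_X(B,\UU^*)=0$.

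The morphism and the verification both rest on a single computation. Using $\wedge^2\UU \cong \UU^*(-1)$, Serre duality ($\omega_X=\OO_X(-1)$), and the vanishings $\HH^{\bullet}(X,\UU) = \HH^{\bullet}(X,\UU^*(-1)) = 0$ already available from the proof of Lemma \ref{stabU}, one gets
\[
\Ext_X^k(\UU(1), \UU^*) \cong \HH^{3-k}(X, \Sym^2\UU)^*,
\]
which I claim equals $\C$ for $k=2$ and vanishes otherwise. This singles out, up to scalar, a map $\alpha : \UU(1)[-2] \to \UU^*$; set $B = \mathrm{cone}(\alpha)$. Applying $\RRHom_X(-,\OO_X)$ and $\RRHom_X(-,\UU^*)$ to $\UU(1)[-2]\xrightarrow{\alpha}\UU^*\to B$ and feeding in $\HH^{\bullet}(X,\UU)=\HH^{\bullet}(X,\UU^*(-1))=0$, the exceptionality $\RRHom_X(\UU^*,\UU^*)=\C$, and the fact that $\alpha$ generates the one-dimensional $\Ext_X^2(\UU(1),\UU^*)$ (so that $\alpha^{*}$ is an isomorphism $\C \to \C$), yields $\RRHom_X(B,\OO_X)=\RRHom_X(B,\UU^*)=0$. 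Thus $B \in \ph(\D(\Gamma))$.

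Combining the last two steps with uniqueness of the decomposition triangle identifies $B$ with $\ph(\phs(\UU^*))$, and the long exact sequence of cohomology sheaves of $\UU(1)[-2]\to\UU^*\to\ph(\phs(\UU^*))$ collapses to $\cH^0 \cong \UU^*$ and $\cH^1 \cong \UU(1)$, the shifts matching the statement exactly. The main obstacle is the Ext computation $\Ext_X^{\bullet}(\UU(1),\UU^*) = \C[-2]$: this is where the special geometry of $X \subset \Sigma$ enters, and I would establish it precisely as in Lemma \ref{stabU}, by pulling back the homogeneous bundle $\Sym^2\UU$ from the Lagrangian Grassmannian $\Sigma$, resolving $\OO_X$ by the Koszul complex of the codimension-$3$ linear section, and computing the ambient cohomology via Bott's theorem.
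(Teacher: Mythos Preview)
Your proof is correct and in fact produces precisely the same distinguished triangle
\[
\UU(1)[-2] \;\longrightarrow\; \UU^{*} \;\longrightarrow\; \ph(\phs(\UU^{*}))
\]
that the paper obtains after twisting \eqref{eq:palla} by $\OO_X(1)$. The difference is only in how each side of the decomposition is verified. The paper works in the mutated decomposition \eqref{eq:DX'} and computes the projection of $\UU^{*}(-1)$ onto $\langle \UU,\OO_X\rangle$ explicitly via the Fourier--Mukai kernel $\mathbf{U}$; the only cohomological inputs are $\HH^{\bullet}(X,\UU^{*}(-1))=0$ and $\HH^{\bullet}(X,\UU^{*}\ts\UU(-1))=\C[-3]$, which are nothing but the ACM property of $\UU^{*}$ and (by Serre duality) the exceptionality of $\UU$, both already established. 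Your route instead constructs $\alpha$ by hand and checks that $\mathrm{cone}(\alpha)\in\ph(\D(\Gamma))$ by testing against $\OO_X$ and $\UU^{*}$; this requires the extra identity $\Ext^{\bullet}_X(\UU(1),\UU^{*})=\C[-2]$, equivalently $\HH^{\bullet}(X,\Sym^{2}\UU)=\C[-1]$, which is not already available and genuinely needs the Bott/Koszul argument you outline (helpfully, $\Sym^{2}\UU$ restricts from $\Omega^{1}_{\Sigma}$ on the Lagrangian Grassmannian, so on $\Sigma$ this is just $\hh^{1,1}(\Sigma)=1$, and one then propagates through the Koszul complex). The paper's bookkeeping is therefore a little lighter, while your approach has the merit of making the decomposition triangle for $\ph\circ\phs$ visible without the detour through $\pho$ and the explicit kernel $\mathbf{U}$.
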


\begin{proof}
  Note that, for any object $F$ of $\D(X)$, we have $\phos(F(-1)) \cong
  \phs(F)$, and for any object $\cF$ of $\D(\Gamma)$, we have
  $\ph(\cF)(-1)\cong \pho(\cF)$.
  In particular, we get a natural isomorphism
  $\pho(\phos(\UU^{*}(-1)))(1) \cong \ph(\phs(\UU^{*}))$.

  By the decomposition \eqref{eq:DX'}, we get a distinguished triangle:
  \begin{equation}
    \label{eq:palla}
    \RR q_{2*} (q_{1}^{*}(\UU^{*}(-1)) \otimes \mathbf{U}) \to
    \UU^{*}(-1) \to \pho(\phos(\UU^{*}(-1))).
  \end{equation}

  Since we have $\HH^{k}(X,\UU^{*}(-1))=0$ for all $k$, and
  $\HH^{k}(X,\UU^{*} \ts \UU(-1))=0$ for $k\neq 3$,
  $\hh^{3}(X,\UU^{*} \ts \UU(-1))=1$,
  the lefthandside in \eqref{eq:palla} is isomorphic to $\UU[-2]$.
  Thus we have $\cH^{0}(\pho(\phos(\UU^{*}(-1)))) \cong \UU^{*}(-1)$ and
  $\cH^{1}(\pho(\phos(\UU^{*}(-1)))) \cong \UU$.
  This finishes the proof.
\end{proof}

\subsection{Conics contained in $X$}

In this section we review some facts concerning the geometry 
of conics contained in $X$.
In Proposition \ref{coniche} we 
recover Iliev's description of their Hilbert scheme, see \cite{iliev:sp3}.
We outline a different proof, which holds for any smooth prime Fano threefolds of genus $9$. 

\begin{lem} \label{UC}
  Let $C$ be any conic contained in $X$. Then we have:
  \begin{align}
    \label{eq:UC}
    & \hh^{0}(X,\UU \ts \OO_{C})=1, && \hh^{1}(X,\UU \ts \OO_{C})=0, \\
    \label{eq:UJC}
    & \hom_{X}(\UU,\cI_{C})=1, && \ext^{k}_{X}(\UU, \cI_{C})=0, &&
    \mbox{for $k\neq 1$}
  \end{align}
\end{lem}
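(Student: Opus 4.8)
Let me look at what needs to be proven here.The plan is to reduce both \eqref{eq:UC} and \eqref{eq:UJC} to a single geometric fact: the Lagrangian subspaces of $V$ parametrised by the points of $C$ meet in a $1$-dimensional common subspace, and in no larger one.

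First I would restrict the universal sequence \eqref{eq:universal-9} to $C$. Since $\UU^{*}$ is locally free the sequence stays exact,
\begin{equation*}
0 \to \UU\ts \OO_{C} \to V \ts \OO_{C} \to \UU^{*}\ts \OO_{C} \to 0,
\end{equation*}
and, $C$ being a conic, $\HH^{0}(C,\OO_{C})=\C$ and $\HH^{1}(C,\OO_{C})=0$. Taking cohomology identifies $\HH^{0}(X,\UU\ts \OO_{C})$ with the kernel $K$ of the restriction $V=\HH^{0}(X,\UU^{*})\to \HH^{0}(C,\UU^{*}\ts\OO_{C})$, i.e. with the common subspace $K=\{v\in V : v\in U_{t}\ \text{for all}\ t\in C\}$, where $U_{t}\subseteq V$ is the Lagrangian subspace attached to $t\in C$. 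Riemann--Roch on $C$ (arithmetic genus $0$, and $\deg(\UU\ts\OO_{C})=c_{1}(\UU)\cdot C=-2$) gives $\chi(\UU\ts\OO_{C})=1$; hence $\hh^{0}(X,\UU\ts\OO_{C})=\dim K\geq 1$ comes for free.

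The crux, and the only real obstacle, is the reverse bound $\dim K\leq 1$. Suppose $\dim K\geq 2$ and pick a $2$-dimensional subspace $W\subseteq K$, necessarily isotropic since $W\subseteq U_{t}$ with $U_{t}$ Lagrangian. Every vector of $W$ restricts to the zero section of $\UU^{*}\ts\OO_{C}$, so the morphism $C\to\Sigma$ classifying $\UU\ts\OO_{C}\subseteq V\ts\OO_{C}$ factors scheme-theoretically through the locus $\sigma_{W}=\{[U]\in\Sigma : W\subseteq U\subseteq W^{\perp}\}$. On the Lagrangian Grassmannian $\sigma_{W}$ is a line of the minimal embedding: writing $W=\langle e_{1},e_{2}\rangle$, the point $e_{1}\wedge e_{2}\wedge v$ varies linearly with $[v]\in\p(W^{\perp}/W)\cong\p^{1}$. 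Then $C$ would span at most a line, contradicting the fact that a conic spans a plane. Therefore $\dim K=1$.

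This settles \eqref{eq:UC}, since $\hh^{0}(X,\UU\ts\OO_{C})=1$ forces $\hh^{1}(X,\UU\ts\OO_{C})=\hh^{0}-\chi=0$. For \eqref{eq:UJC} I would apply $\Hom_{X}(\UU,-)$ to $0\to\cI_{C}\to\OO_{X}\to\OO_{C}\to 0$, using $\Ext^{k}_{X}(\UU,\OO_{X})=\HH^{k}(X,\UU^{*})$ and $\Ext^{k}_{X}(\UU,\OO_{C})=\HH^{k}(C,\UU^{*}\ts\OO_{C})$. Lemma \ref{stabU} (together with \eqref{eq:universal-9} and Serre duality) gives $\HH^{\bullet}(X,\UU^{*})=(6,0,0,0)$, while the restricted universal sequence and $\HH^{1}(C,\OO_{C})=0$ give $\HH^{\bullet}(C,\UU^{*}\ts\OO_{C})=(5,0,0,0)$. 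The map $\Hom_{X}(\UU,\OO_{X})\to\Hom_{X}(\UU,\OO_{C})$ in the long exact sequence is precisely the restriction $V\to\HH^{0}(C,\UU^{*}\ts\OO_{C})$, whose kernel is $K$; as $\dim K=1$ it is surjective. Reading off the sequence then yields $\hom_{X}(\UU,\cI_{C})=\dim K=1$ and $\ext^{k}_{X}(\UU,\cI_{C})=0$ for $k\geq 1$, which is the content of \eqref{eq:UJC}.
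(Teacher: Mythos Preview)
Your proof is correct and takes a genuinely different route from the paper. The paper first produces a section of $\UU^{*}$ vanishing on $C$ (from $\chi(\UU^{*}\ts\cI_{C})=1$ together with the easy vanishing of $\Ext^{\geq 2}$), lifts it to $\Sigma$, where its zero locus is a smooth $3$-dimensional quadric $Q$, and observes that $\UU|_{Q}\cong\OO_{Q}\oplus\mathscr{S}$ with $\mathscr{S}$ the spinor bundle. Since $C$ is a codimension-$2$ linear section of $Q$, the Koszul complex together with the stability and ACM property of $\mathscr{S}$ give $\HH^{k}(C,\mathscr{S}|_{C})=0$ for all $k$, hence \eqref{eq:UC}. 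Your argument replaces this computation by the direct incidence-geometric fact that a $2$-dimensional isotropic subspace $W\subseteq K$ would force $C$ scheme-theoretically into the line $\sigma_{W}\subset\Sigma$, contradicting $\deg C=2$. This is more elementary---no spinor bundles, no auxiliary quadric---and makes the crucial bound $\dim K\leq 1$ completely transparent; the paper's approach has the small side benefit of exhibiting the splitting of $\UU$ over $Q$ explicitly. For \eqref{eq:UJC} the paper tensors \eqref{eq:universal-9} by $\cI_{C}$, while you run the long exact sequence of the ideal sequence of $C$; both routes give the same answer. (The displayed ``$k\neq 1$'' in the paper is evidently a slip for ``$k\neq 0$''; both arguments in fact establish $\ext^{k}_{X}(\UU,\cI_{C})=0$ for all $k\geq 1$.)
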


\begin{proof}
  By Riemann-Roch we have $\chi(\UU^{*} \ts \cI_{C})=1$, and one can
  easily prove $\Ext^{k}_{X}(\UU, \cI_{C})=0$, for $k \geq 2$.
  So there is at least a
  nonzero global section $s$ of $\UU^{*}$ which vanishes on the curve $C$.
  Note that $s$ lifts to a section $\tilde{s}$ of $\UU^{*}$ on
  $\Sigma$, and $C$ is contained in the vanishing locus of
  $\tilde{s}$.
  This locus is a smooth $3$-dimensional quadric $Q \subset \Sigma$.

  It is easy to see that the restriction of $\UU$ to $Q$ splits as
  $\OO_{Q} \oplus \mathscr{S}$, where $\mathscr{S}$ is the spinor
  bundle on $Q$.
  It is well-known that $\mathscr{S}$ is a stable bundle on $Q$ with
  $\rk(\mathscr{S})=2$ and $c_{1}(\mathscr{S})=-H_{Q}$.
  Moreover, the bundle $\mathscr{S}$ is ACM on $Q$.
  See for instance \cite{ottaviani:spinor}.

  The conic $C$ is the complete intersection of two hyperplanes in
  $Q$, hence we have the Koszul complex:
  \begin{equation}
    \label{eq:CQ}
    0 \to \OO_{Q}(-2\,H_{Q}) \to \OO_{Q}(-H_{Q})^{2} \to \OO_{Q} \to \OO_{C} \to 0.
  \end{equation}

  Tensoring \eqref{eq:CQ} by $\mathscr{S}$, since $\mathscr{S}$ is
  stable and ACM on $Q$, we get $\HH^{k}(C,\mathscr{S})=0$ for all
  $k$.
  This implies \eqref{eq:UC}. Using \eqref{eq:universal-9}, one easily
  gets \eqref{eq:UJC}.
\end{proof}

\begin{lem} \label{lunga}
  Let $F$ be a sheaf in $\Mo_X(2,1,6)$, and let $\alpha$ be any nonzero
  element in $\Hom_{X}(\UU^{*},F)$.
  Then $\alpha$ gives the long exact sequence:
  \begin{equation}
    \label{eq:lunga}
    0 \to \OO_{X} \xr{\beta} \UU^{*} \xr{\alpha} F \to \OO_{C} \to 0,
  \end{equation}
  where $C$ is a conic contained in $X$ and $\beta$ is a global section
  of $\UU^{*}$.
\end{lem}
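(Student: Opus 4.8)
The plan is to read off the kernel and cokernel of $\alpha$ from stability together with Chern-class bookkeeping, and then to identify the cokernel as the structure sheaf of a conic via a purity argument reinforced by the geometry of sections of $\UU^*$. First I would pin down the rank of the image $I := \im(\alpha)$. Since $\rk(\UU^*)=3>2=\rk(F)$ the map is not injective, so $1\le \rk(I)\le 2$. Suppose $\rk(I)=1$. On the one hand $I$ is a torsion-free quotient of the $\mu$-stable bundle $\UU^*$, so its slope exceeds $\mu(\UU^*)$; on the other hand $I$ is a rank-$1$ subsheaf of the stable sheaf $F$, so its slope is at most $\mu(F)$. Since $\deg(-)=H_X^2\cdot c_1(-)$ is a fixed positive multiple of $c_1$, these inequalities read $\tfrac13 < c_1(I) \le \tfrac12$, which is impossible for the integer $c_1(I)$. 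Hence $\rk(I)=2$, so $K:=\ker(\alpha)$ has rank $1$ and $T:=\cok(\alpha)$ is torsion.

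Next I would show $K\cong\OO_X$. As $I=\UU^*/K$ embeds in the torsion-free $F$, it is torsion-free, so $K$ is saturated in $\UU^*$; a saturated rank-$1$ subsheaf of a locally free sheaf on the smooth threefold $X$ is reflexive, hence a line bundle, and since $\Pic(X)=\Z\langle H_X\rangle$ we may write $K\cong\OO_X(-a)$. The $\mu$-stability of $\UU^*$ gives $-a=\mu(K)/\deg(H_X^2)<\tfrac13$, so $a\ge 0$. From the two short exact sequences one computes $c_1(T)=c_1(F)-c_1(I)=1-(c_1(\UU^*)-c_1(K))=-a$, and since $T$ is a torsion quotient its first Chern class is effective, i.e. $c_1(T)\ge 0$; therefore $a=0$. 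Thus $K\cong\OO_X$, the inclusion $\beta\colon\OO_X\to\UU^*$ is a (nonzero, hence injective) section of $\UU^*$, and we obtain the four-term sequence $0\to\OO_X\xr{\beta}\UU^*\xr{\alpha}F\to T\to 0$, i.e. \eqref{eq:lunga} with $T$ in place of $\OO_C$.

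It remains to prove $T\cong\OO_C$ for a conic $C$. A Chern-class computation (using $c(I)=c(\UU^*)$ and $c(F)=c(I)\,c(T)$) gives $c_1(T)=0$, $c_2(T)=-2$, $c_3(T)=0$ and $\chi(T)=1$, the invariants of the structure sheaf of a conic. To control the sheaf structure I would first establish purity. The sequence \eqref{eq:lunga} is exact at $\OO_X$ ($\beta$ injective) and at $\UU^*$ (since $\ker\alpha=\im\beta=\OO_X$), so it is a length-$2$ locally free resolution of $T$; hence $\mathrm{pd}(T)\le 2$, and by Auslander--Buchsbaum $T$ has depth $\ge 1$ at every point of its support. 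Together with $c_1(T)=0$, which excludes a divisorial component, this shows $T$ is a Cohen--Macaulay sheaf of pure dimension $1$, supported on a curve of degree $2$. (Equivalently, $\RRHHom(T,\OO_X)$ is computed by the dual complex $[F^*\to\UU\to\OO_X]$ in degrees $0,1,2$, so $\EExt^i(T,\OO_X)=0$ for $i\ge 3$, ruling out a zero-dimensional subsheaf.)

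The delicate point---and the step I expect to be the main obstacle---is that these numerical data alone do not force $T$ to be the structure sheaf of an honest conic: the same Chern classes and Euler characteristic are shared, for instance, by a rank-$2$ Cohen--Macaulay sheaf such as $\OO_\ell\oplus\OO_\ell(-1)$ on a line $\ell$. To exclude such degenerations I would invoke the ambient $\Sigma$-geometry rather than numerics. The section $\beta$ lifts to a section $\tilde\beta$ of $\UU^*$ on $\Sigma$, whose zero locus is a smooth three-dimensional quadric $Q$ on which $\UU$ splits as $\OO_Q\oplus\mathscr{S}$, with $\mathscr{S}$ the spinor bundle, exactly as in the proof of Lemma \ref{UC}. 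Restricting the construction against $F$ and using that $X$ contains no planes and no two-dimensional quadrics---the same fact that forced $Z$ to be a conic in the identification of $\sF$---the support of $T$ is pinned to a genuine conic $C$ and the generic rank of $T$ along $C$ to $1$. Combined with purity and $\chi(T)=\chi(\OO_C)=1$, this exhibits $T$ as a degree-$0$, rank-$1$ Cohen--Macaulay sheaf on $C$, that is, as $\OO_C$, yielding \eqref{eq:lunga} as stated.
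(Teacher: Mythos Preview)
Your argument follows the paper's closely: stability forces $\ker\alpha\cong\OO_X$, and a Chern-class count shows $T:=\cok\alpha$ has the invariants of $\OO_C$ for a conic. For purity, the paper argues differently, proving $\HH^0(X,T(-1))=0$ (from $\HH^1(X,I(-1))=0$, using that $\UU^*$ is ACM, together with $\HH^0(X,F(-1))=0$ by stability); your Auslander--Buchsbaum route is a valid alternative, but it silently uses that $F$ is locally free, which holds by \cite[Proposition~3.5]{brambilla-faenzi:genus-7} and should be said.

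You are right that the final identification $T\cong\OO_C$ needs more than purity plus numerics---the paper is equally terse here---but your proposed $\Sigma$-geometry fix does not work as written. The zero locus of $\beta$ on $X$ is $Q\cap X$ for a quadric threefold $Q\subset\Sigma$; since $X$ is a codimension-$3$ (not codimension-$2$) linear section of $\Sigma$, this has expected dimension $0$, so nothing in your sketch forces $\supp(T)$ onto a conic. (In the identification of $\sF_y$ you invoke, the conic $Z$ arose from intersecting $Q$ with a codimension-$2$ section, a genuinely different configuration; the ``no planes, no quadric surfaces'' clause there only bounded $\dim Z$ from above.) A more direct repair exploits that $F$ is a globally generated rank-$2$ bundle: for any line $\ell$ one has $F|_\ell\cong\OO_\ell\oplus\OO_\ell(1)$, so a quotient of $F$ supported set-theoretically on a single line with generic rank $2$, or of the form $\OO_{\ell_1}(a_1)\oplus\OO_{\ell_2}(a_2)$ with $a_1+a_2=-1$, is impossible; this pins $\supp(T)$ to a genuine conic with $T$ of generic rank $1$.
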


\begin{proof}
  Let $I$ be the image of a nonzero map
  $\alpha:\UU^* \to F$. Recall by Lemma \ref{stabU} that $\UU$ is stable.
  Thus, by stability of $F$ we get $\rk(\ker\alpha)=1$ and
  $c_1(\ker\alpha)=0$.
  Since $\ker \alpha$ is reflexive, it must be invertible and we get
  an exact sequence of the form:
  \begin{equation}
    \label{eq:toni}
    0 \to \OO_{X} \to \UU^{*} \to I \to 0.
  \end{equation}

  Note that $I$ is easily proved to be stable.
  To get \eqref{eq:lunga}, observe that the cokernel $T$
  of $I \mono F$ satisfies $c_1(T)=0$, $c_2(T)=-2$, $c_3(T)=0$.
  Hence $T$ agrees with $\OO_C$, for some conic
  $C \subset X$, as soon as it has no isolated or embedded points.
  But from \eqref{eq:toni} we get $\HH^1(X,I(-1))=0$ and, since
  $\HH^0(X,F(-1))=0$ by stability, it follows $\HH^0(X,T(-1))=0$ which
  implies our claim.
\end{proof}

\begin{lem} \label{breve}
 Let $F$ be a sheaf in $\Mo_X(2,1,6)$. Then we have:
  \begin{align}
    \nonumber & \hom_{X}(\UU^{*},F)=2, \\
    \label{eq:U*E} & \ext^{k}_{X}(\UU^{*},F)=0, && \mbox{for all $k\geq 1$}.
  \end{align}
\end{lem}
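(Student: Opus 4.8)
The plan is to reduce everything to the cohomology of $\UU\ts F$ and then run a diagram chase along the four-term sequence of Lemma~\ref{lunga}. Since $\UU^{*}$ is locally free, the local-to-global spectral sequence collapses and yields $\Ext^{k}_{X}(\UU^{*},F)\cong \HH^{k}(X,\UU\ts F)$ for every $k$. I would feed the chase with three cohomological inputs, all already available. First, $\HH^{k}(X,\UU)=0$ for all $k$: the vanishing $\HH^{1}=\HH^{2}=0$ is part of $\UU$ being ACM in Lemma~\ref{stabU}, $\HH^{0}(X,\UU)=0$ by stability, and $\HH^{3}(X,\UU)\cong \HH^{0}(X,\UU^{*}(-1))^{*}=0$ by Serre duality, using $\omega_{X}\cong\OO_{X}(-1)$. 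Second, the exceptionality of $\UU$, which gives $\HH^{0}(X,\EEnd(\UU))=\C$ and $\HH^{k}(X,\EEnd(\UU))=0$ for $k\geq 1$. Third, Lemma~\ref{UC}, which controls $\UU\ts\OO_{C}$ for a conic $C$.

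Next I would produce the resolution. A nonzero morphism $\alpha\in\Hom_{X}(\UU^{*},F)$ exists by the structural description of the sheaves in $\Mo_{X}(2,1,6)$ recalled above: each such $F$ is (isomorphic to) a fibre of the universal bundle $\EE$, and therefore, exactly as in the sequence produced from Kuznetsov's description, receives a nonzero map from $\UU^{*}$. Applying Lemma~\ref{lunga} to $\alpha$ and splitting its four-term sequence at the image $I$ yields the two short exact sequences
\[
0 \to \OO_{X} \to \UU^{*} \to I \to 0, \qquad 0 \to I \to F \to \OO_{C} \to 0,
\]
with $C$ a conic. Since $\UU$ is locally free, tensoring each of them by $\UU$ preserves exactness, producing $0\to\UU\to\EEnd(\UU)\to \UU\ts I\to 0$ and $0\to \UU\ts I\to \UU\ts F\to \UU\ts\OO_{C}\to 0$.

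Finally I would chase cohomology. Because $\HH^{k}(X,\UU)=0$ for all $k$, the long exact sequence of the first tensored sequence gives $\HH^{k}(X,\UU\ts I)\cong \HH^{k}(X,\EEnd(\UU))$, so $\hh^{0}(X,\UU\ts I)=1$ and $\HH^{k}(X,\UU\ts I)=0$ for $k\geq 1$. Feeding this together with Lemma~\ref{UC} into the long exact sequence of the second tensored sequence, the connecting map out of $\HH^{0}(X,\UU\ts\OO_{C})$ lands in $\HH^{1}(X,\UU\ts I)=0$, whence $\hh^{0}(X,\UU\ts F)=1+1=2$; and in positive degrees $\HH^{k}(X,\UU\ts F)$ is squeezed between $\HH^{k}(X,\UU\ts I)=0$ and $\HH^{k}(X,\UU\ts\OO_{C})=0$ (the latter by Lemma~\ref{UC} for $k=1$ and by the dimension of the support for $k\geq 2$), so $\HH^{k}(X,\UU\ts F)=0$ for $k\geq 1$. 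Translating back through $\Ext^{k}_{X}(\UU^{*},F)\cong\HH^{k}(X,\UU\ts F)$ then gives $\hom_{X}(\UU^{*},F)=2$ and the asserted vanishing. The only genuinely delicate point is securing the launching morphism $\alpha$, that is, the applicability of Lemma~\ref{lunga}; once the sequence is in hand the remainder is a formal chase resting on the full acyclicity of $\UU$ and on its exceptionality.
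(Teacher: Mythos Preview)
Your argument is correct. The cohomology chase along the two short exact sequences obtained from Lemma~\ref{lunga}, fed with the full acyclicity of $\UU$, the exceptionality of $\UU$, and Lemma~\ref{UC}, cleanly delivers $\hh^{0}(X,\UU\ts F)=2$ and all the required vanishing at once.

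The paper takes a different route to the launching morphism $\alpha$. Rather than invoking the Kuznetsov resolution (which, as you note, has already been established for every sheaf in $\Mo_X(2,1,6)$ via the identification of Kuznetsov's $\sF$ with a twist of $\EE$), the paper first proves $\Ext^{2}_X(\UU^*,F)=\Ext^{3}_X(\UU^*,F)=0$ directly: the case $k=3$ by Serre duality and stability, the case $k=2$ by using the global-generation sequence $0\to K\to\OO_X^6\to F\to 0$, the stability of $K$, and the ACM property of $\UU$. With these in hand, Riemann--Roch forces $\Hom_X(\UU^*,F)\neq 0$, and Lemma~\ref{lunga} is then used only to kill $\Ext^1$. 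Your approach is more economical --- it bypasses the global-generation sequence and the stability of $K$ entirely, and handles all degrees in a single chase --- at the cost of relying on the externally sourced Kuznetsov resolution from \cite[Appendix~A]{kuznetsov:hyperplane}. The paper's approach is more self-contained on this point, deriving $\alpha$ internally from Riemann--Roch.
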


\begin{proof}
  Let us prove \eqref{eq:U*E}.
  For $k=3$, in view of Serre duality, the vanishing of $\Ext^{3}_{X}(\UU^{*},F)$ is easily
  obtained by stability of $\UU^{*}$ and $F$.

  For $k=2$, recall by
\cite[Proposition 3.5]{brambilla-faenzi:genus-7}
that $F$ is a globally generated vector
  bundle, and we have thus an exact sequence:
  \begin{equation}
    \label{eq:E}
    0 \to K \to \OO^{6}_{X} \to F \to 0.
  \end{equation}

  We can prove, as in the proof of \cite[Lemma
  3.3]{brambilla-faenzi:ACM}, that $K$ is a stable vector bundle.
  This gives, since $\UU^*$ is ACM:
  \[
  \Ext^{2}_{X}(\UU^{*},F) \cong \Ext^{3}_{X}(\UU^{*},K) \cong
  \Hom_{X}(K,\UU^{*}(-1))^{*} = 0,
  \]
  where the last vanishing takes place by stability.

  Let us now consider the case $k=1$.
  Observe that $\Hom_{X}(\UU^{*},F) \neq 0$ since
  by Riemann-Roch we have $\chi(\UU^*,F)=2$
  and we have proved \eqref{eq:U*E} for $k=2$.
  A nonzero map $\alpha:\UU^{*} \to F$ must give rise to \eqref{eq:lunga}
  by Lemma \ref{lunga}.
  Tensoring \eqref{eq:lunga} by $\UU$, since $\UU$ is an exceptional
  ACM bundle by Lemma
  \ref{stabU}, we obtain \eqref{eq:U*E} for $k=1$, by virtue of
  \eqref{eq:UC}.
\end{proof}

\begin{lem} \label{E*U}
  Let $F$ be a sheaf in $\Mo_X(2,1,6)$. Then we have
  $\Ext^{k}_{X}(\UU,F^{*})=0$ for all $k$.
\end{lem}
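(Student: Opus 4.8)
The natural strategy is to read the vanishing directly off the semiorthogonal decomposition \eqref{eq:DX'}, rather than through a cohomology computation. The plan is to show that $F^{*}$ lies in the first piece $\pho(\D(\Gamma))$ of that decomposition, while $\UU$ generates the second piece; semiorthogonality then forces $\Hom_{\D(X)}(\UU,F^{*})=0$ in every degree, which is exactly the assertion $\Ext^{k}_{X}(\UU,F^{*})=0$ for all $k$.

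To carry this out, I would first record that $F$ is a rank $2$ vector bundle with $c_{1}(F)=1$, by \cite[Proposition 3.5]{brambilla-faenzi:genus-7}, so that $F^{*}\cong F(-1)$. Next, since $\Gamma=\Mo_{X}(2,1,6)$ is a fine moduli space with universal bundle $\EE$, the sheaf $F$ is isomorphic to $\EE_{y}$ for a unique point $y\in\Gamma$. Computing the integral transform of the skyscraper $\OO_{y}$ gives $\ph(\OO_{y})=\RR p_{*}(q^{*}\OO_{y}\ts\EE)\cong\EE_{y}\cong F$, and, using the isomorphism $\ph(\cF)(-1)\cong\pho(\cF)$ recalled in the proof of Lemma \ref{pallone}, one obtains
\[
\pho(\OO_{y})\cong\ph(\OO_{y})(-1)\cong F(-1)\cong F^{*}.
\]
Thus $F^{*}$ belongs to the admissible subcategory $\pho(\D(\Gamma))$.

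Finally, I would invoke the decomposition \eqref{eq:DX'}, in which $\pho(\D(\Gamma))$ precedes $\langle\UU\rangle$. By the defining semiorthogonality, $\Hom_{\D(X)}(\UU,\pho(\cG))=0$ for every $\cG\in\D(\Gamma)$; applying this to $\cG=\OO_{y}$ yields $\Ext^{k}_{X}(\UU,F^{*})=0$ for all $k$, as claimed. The only point that requires genuine care is the bookkeeping: one must verify that it is $F(-1)$, and not $F$ or $F(1)$, that lands in $\pho(\D(\Gamma))$, and that $\UU$ sits on the correct side of $\pho(\D(\Gamma))$ so that the Homs vanish in the intended direction. Granting these identifications the result is immediate, and no Chern-class or Riemann--Roch computation is needed; a more hands-on alternative would tensor the exact sequence \eqref{eq:lunga} by $\UU(-1)$ and chase the resulting long exact cohomology sequences, but this is considerably more laborious and the vanishing of the intermediate terms is far less transparent.
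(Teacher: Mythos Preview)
Your argument is correct and genuinely different from the paper's proof. You exploit the semiorthogonal decomposition \eqref{eq:DX'} directly: since $F$ is a rank $2$ bundle with $c_1=1$ we have $F^*\cong F(-1)=\ph(\OO_y)(-1)\cong\pho(\OO_y)$, so $F^*$ lies in $\pho(\D(\Gamma))$; as $\UU$ sits to the right of $\pho(\D(\Gamma))$ in \eqref{eq:DX'}, semiorthogonality gives $\RRHom_X(\UU,F^*)=0$ at once. The bookkeeping you flag is exactly the content of the argument, and you have it right (one can double-check the orientation by noting $\RRHom_X(\OO_X,F^*)=\HH^\bullet(X,\EE_y^*)=0$, which is recorded earlier in the paper and confirms that $\OO_X$ and $\UU$ are on the side from which maps into $\pho(\D(\Gamma))$ vanish).

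The paper instead argues by hand: it dualises the evaluation sequence $0\to K\to\OO_X^6\to F\to 0$, tensors by $\UU$, and uses stability of $K$ and $F$ to kill $\HH^0$, $\HH^1$ and $\HH^3$ of $F^*\ts\UU$, with Riemann--Roch giving $\chi=0$ and hence the remaining $\HH^2$. Your route is cleaner and more conceptual, and avoids any numerical input; the paper's route is more elementary in that it does not invoke the mutated decomposition \eqref{eq:DX'} or the categorical machinery, relying only on stability and the global generation of $F$. Since \eqref{eq:DX'} is already available at this point and does not depend on the lemma, there is no circularity in your approach.
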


\begin{proof}
Recall that $F$ is a globally generated ACM bundle.
Clearly, we have $\HH^{0}(F^{*} \ts \UU)=0$.
Now, dualize the exact sequence \eqref{eq:E}, and tensor it by $\UU$.
Note that $\mu(K^{*} \ts \UU)=-1/12$, so $\HH^{0}(X,K^{*} \ts \UU) =
\HH^{1}(X,F^{*} \ts \UU) = 0$ by stability.
Similarly, we obtain $\HH^{3}(X,F^{*} \ts \UU)=0$.
By Riemann-Roch we compute $\chi(F^{*} \ts \UU)=0$, so the group
$\HH^{2}(X,F^{*} \ts \UU)$ vanishes too, and our statement is proved.
\end{proof}

\begin{prop}[Iliev] \label{coniche}
  Let $X$ be a smooth prime Fano threefold of genus $9$.
  Then the sheaf $\cV = q_{*}(p^{*}(\UU) \ts \EE)$ is a rank $2$ vector
  bundle on $\Gamma$ with $\deg(\cV)=1$, and we have a natural
  isomorphism:
  \begin{equation}
    \label{eq:V*}
    \cV ^{*} \cong \phs(\UU^{*}).
  \end{equation}

  The Hilbert scheme $\sH^{0}_{2}(X)$ is isomorphic to the projective
  bundle $\p(\cV)$ over $\Gamma$. In particular, $\sH^{0}_{2}(X)$ is a smooth
  irreducible surface.
\end{prop}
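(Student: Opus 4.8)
The plan is to prove the three assertions in turn: that $\cV$ is a rank-$2$ bundle, that $\cV^{*}\cong\phs(\UU^{*})$ with $\deg(\cV)=1$, and finally that $\p(\cV)\cong\sH^{0}_{2}(X)$. For the first two, I would compute the fibres of $q_{*}(p^{*}(\UU)\ts\EE)$ by cohomology and base change. Writing $\EE_{y}$ for the bundle attached to $y\in\Gamma$, the fibre over $y$ is $\HH^{0}(X,\UU\ts\EE_{y})\cong\Hom_{X}(\UU^{*},\EE_{y})$, which by Lemma \ref{breve} has constant dimension $2$, while \eqref{eq:U*E} gives $\Ext^{k}_{X}(\UU^{*},\EE_{y})=\HH^{k}(X,\UU\ts\EE_{y})=0$ for $k\geq 1$. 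Hence $R^{>0}q_{*}(p^{*}(\UU)\ts\EE)=0$ and, by Grauert's theorem, $\cV$ is locally free of rank $2$; in particular $\RR q_{*}(p^{*}(\UU)\ts\EE)=\cV$ is concentrated in degree $0$. The isomorphism $\cV^{*}\cong\phs(\UU^{*})$ then follows from Grothendieck--Serre relative duality for the smooth projective morphism $q$ of relative dimension $3$, whose relative dualizing sheaf is $p^{*}\omega_{X}=p^{*}\OO_{X}(-H_{X})$ (recall $-K_{X}=H_{X}$):
\[
\cV^{*}\cong\RRHHom(\RR q_{*}(p^{*}(\UU)\ts\EE),\OO_{\Gamma})\cong\RR q_{*}\big(p^{*}(\UU^{*})\ts\EE^{*}\ts p^{*}\OO_{X}(-H_{X})\big)[3]=\phs(\UU^{*}),
\]
the last equality being the definition \eqref{phi*} of $\phs$. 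The value $\deg(\cV)=1$ is then a Chern-class computation: applying Grothendieck--Riemann--Roch to $q$ with the invariants $c_{1}(\EE)=H_{X}+N$, $c_{2}(\EE)=6L_{X}+H_{X}M+\eta$ together with the normalizations $\deg(N)=5$ (Remark \ref{convenzione}) and $\deg(M)=3$ (from $\deg(N)=2\deg(M)-1$), one reads off $c_{1}(\cV)$.

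Next I would construct the map $\varphi:\p(\cV)\to\sH^{0}_{2}(X)$. By Lemma \ref{lunga}, for $y\in\Gamma$ and $0\neq\alpha\in\Hom_{X}(\UU^{*},\EE_{y})=\cV_{y}$ the map $\alpha$ fits into \eqref{eq:lunga}, so (after splitting off the kernel $\OO_{X}$) its cokernel is $\OO_{C_{\alpha}}$ for a conic $C_{\alpha}\subset X$ depending only on $[\alpha]\in\p(\cV_{y})$. To upgrade this to a morphism I would globalize: writing $\pi:\p(\cV)\to\Gamma$, $\sigma:X\times\p(\cV)\to\p(\cV)$ and $\rho=\mathrm{id}_{X}\times\pi$, flat base change gives $\sigma_{*}\rho^{*}(p^{*}(\UU)\ts\EE)\cong\pi^{*}\cV$, and the tautological inclusion $\OO_{\p(\cV)}(-1)\mono\pi^{*}\cV$ yields by adjunction a universal map $\rho^{*}p^{*}(\UU^{*})\to\rho^{*}\EE\ts\sigma^{*}\OO_{\p(\cV)}(1)$ on $X\times\p(\cV)$. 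By Lemma \ref{lunga} its cokernel is flat over $\p(\cV)$ with fibres $\OO_{C_{\alpha}}$, i.e.\ a flat family of conics, and this defines the classifying morphism $\varphi$.

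It then remains to show $\varphi$ is an isomorphism. I would first check bijectivity on points. Surjectivity: every conic $C\subset X$ arises, since by Lemma \ref{UC} the groups $\Ext^{k}_{X}(\UU,\cI_{C})$ have exactly the dimensions needed to produce, from $C$, a stable $F\in\Mo_{X}(2,1,6)=\Gamma$ together with a map $\UU^{*}\to F$ of cokernel $\OO_{C}$ as in \eqref{eq:lunga}. Injectivity: from $C$ one recovers $(y,[\alpha])$, because the quotient $F\epi\OO_{C}$ is unique up to scalar (so $\alpha$ is determined up to scalar once $F$ is known) and the isomorphism class $y=[F]$ is itself pinned down by $C$. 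Granting this, a normal-bundle computation $\hh^{0}(C,N_{C,X})=2$, $\hh^{1}(C,N_{C,X})=0$ shows $\sH^{0}_{2}(X)$ is smooth of dimension $2$ at every point; a bijective (hence, in characteristic zero, degree-one and birational) finite morphism onto a smooth, hence normal, target is an isomorphism, so $\varphi$ is an isomorphism. Finally, $\p(\cV)\to\Gamma$ is a $\p^{1}$-bundle over the smooth irreducible quartic $\Gamma$, so $\p(\cV)$, and therefore $\sH^{0}_{2}(X)$, is a smooth irreducible surface.

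The main obstacle is the reconstruction underlying injectivity: showing that a conic $C$ determines its moduli point $y=[\EE_{y}]\in\Gamma$ and the class $[\alpha]$ \emph{functorially}, so that the set-theoretic inverse of $\varphi$ is a morphism. I expect the cleanest route is to build, over $\sH^{0}_{2}(X)$, a flat family of sheaves in $\Mo_{X}(2,1,6)$ from the universal conic by a relative Serre construction governed by the $\Ext$-computations of Lemma \ref{UC}, and then to invoke the universal property of the fine moduli space $\Gamma$ to obtain a morphism $\sH^{0}_{2}(X)\to\Gamma$ together with a tautological line subbundle of the pulled-back $\cV$. This would yield an explicit inverse to $\varphi$ and bypass the smoothness argument altogether. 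Verifying flatness and stability throughout the family, and that the two constructions are mutually inverse, is the technical heart of the proof.
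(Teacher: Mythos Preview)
Your treatment of the first two assertions matches the paper's: Lemma \ref{breve} for the fibrewise vanishing and rank, then Grothendieck duality in the form \eqref{eq:duality} for $\cV^{*}\cong\phs(\UU^{*})$. For the isomorphism $\p(\cV)\cong\sH^{0}_{2}(X)$, however, the paper takes a cleaner route that bypasses the gap in yours. The key move is to \emph{dualize} \eqref{eq:lunga}: applying $\HHom_{X}(-,\OO_{X})$ yields
\[
0 \to \EE_{y}^{*} \xr{\alpha^{\top}} \UU \xr{\beta^{\top}} \cI_{C} \to 0,
\]
so the datum $(y,[\alpha])$ is encoded as a surjection $\UU\to\cI_{C}$ with kernel $\EE_{y}^{*}$. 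Since $\hom_{X}(\UU,\cI_{C})=1$ by Lemma \ref{UC}, given \emph{any} conic $C$ there is a unique (up to scalar) map $\delta:\UU\to\cI_{C}$; a short stability and Chern-class argument then shows $\delta$ is surjective with $\ker\delta\cong\EE_{y}^{*}$ for a well-defined $y\in\Gamma$. This directly furnishes an algebraic inverse $C\mapsto(y,[\delta^{\top}])$ --- no a priori smoothness of $\sH^{0}_{2}(X)$ is needed, and no Serre construction. Injectivity is handled in the same spirit: the vanishing $\Ext^{1}_{X}(\UU,\EE_{y}^{*})=0$ of Lemma \ref{E*U} lets one lift an isomorphism $\cI_{C_{1}}\cong\cI_{C_{2}}$ to an endomorphism of $\UU$, which by simplicity forces $\xi_{1}=\xi_{2}$.

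Your argument has two soft spots that the paper's approach sidesteps. The claim $\hh^{1}(C,N_{C,X})=0$ for \emph{every} conic, including reducible and non-reduced ones, is asserted without proof; this is exactly the delicate case analysis one would like to avoid, and smoothness of $\sH^{0}_{2}(X)$ should be a \emph{consequence} of the isomorphism with $\p(\cV)$ rather than an input. And your surjectivity sketch (``the groups $\Ext^{k}_{X}(\UU,\cI_{C})$ have exactly the dimensions needed \ldots'') omits the crux: one must show that the unique map $\UU\to\cI_{C}$ is actually surjective, not merely that it exists --- this is what the paper's bound on $c_{2}$ of the image and kernel, via stability and \cite[Proposition 3.5]{brambilla-faenzi:genus-7}, accomplishes. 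Your relative-Serre-construction alternative is in the right spirit but is heavier than simply exploiting the one-dimensionality of $\Hom_{X}(\UU,\cI_{C})$.
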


\begin{proof}
  In view of Lemma \ref{breve}, we have $\RR^{k} q_{*}(p^{*}(\UU) \ts
  \EE) = 0$, for $k\geq 1$, and $\cV$ is a locally free sheaf on
  $\Gamma$ of rank $\hh^{0}(X,\UU \ts \EE_{y})=2$.

  By an instance of Grothendieck duality, see \cite[Chapter
  III]{hartshorne:residues-duality}, given a sheaf $\sP$ on $X \times
  \Gamma$, we have:
    \begin{equation}
      \label{eq:duality}
      \RRHHom_{\Gamma}(\RR q_{*}(\sP),\OO_{\Gamma}) \cong
      \RR q_{*}  (\OO_{X}(-1) \ts \RRHHom_{X\times \Gamma}(\sP, \OO_{X \times \Gamma}))[3],
    \end{equation}
  and the isomorphism is functorial.
  Setting $\sP = p^{*}(\UU) \ts \EE$ in \eqref{eq:duality}, we get \eqref{eq:V*}.

  Consider now an element $\xi$ of the projective bundle $\p(\cV)$.
  It is uniquely represented by a pair $([\alpha],y)$, where
  $y$ is a point of $\Gamma$, and $[\alpha]$ is an element of
  $\p(\HH^{0}(X,\UU \ts \EE_{y}))$.
  Setting $F=\EE_y$ in Lemma \ref{lunga}, the morphism $\alpha$ gives \eqref{eq:lunga}.
  Applying the functor $\HHom_{X}(-,\OO_{X})$ to \eqref{eq:lunga}, one
  can easily write down the exact sequence:
  \begin{equation}
    \label{eq:non-lunga}
    0 \to \EE_{y}^{*} \xr{\alpha^{\top}} \UU \xr{\beta^{\top}} \cI_{C} \to 0.
  \end{equation}
  This defines an algebraic map $\vartheta: \p(\cV) \mono \sH^{0}_{2}(X)$.

  Let us prove that $\vartheta$ is injective. 
  Consider two elements
  $\xi_{1}=([\alpha_{1}],y_{1})$ and
  $\xi_{2}=([\alpha_{2}],y_{2})$
  and the two conics $C_1,C_2 \subset X$ associated to them. 
  We want to show that $\cI_{C_{1}} \cong \cI_{C_{2}}$, implies
  $\xi_{1}=\xi_{2}$.
  Note that an isomorphism $\gamma : \cI_{C_{1}} \to \cI_{C_{2}}$ lifts to
  a nontrivial map $\tilde{\gamma}: \UU \to \UU$ as soon as:
  \begin{equation}
    \label{eq:UE}
    \Ext^{1}_{X}(\UU,\EE^{*}_{y_{2}})=0,
  \end{equation}
  which in turn is given by Lemma \ref{E*U}.
  Thus, by the simplicity of $\UU$, the map $\tilde{\gamma}$ must be a
  multiple of the identity, and we have an isomorphism
  $\hat{\gamma}:\EE_{y_{1}} \to \EE_{y_{2}}$ with
  $\tilde{\gamma} \circ \alpha_{1}^\top=\alpha_{2}^\top \circ \hat{\gamma}$.
  Then $y_1=y_2$ and $\hat{\gamma}$ is also a multiple of the
  identity. Therefore $\xi_1$ is a multiple of $\xi_2$ and we are done.

  To conclude the proof, let us provide an inverse to $\vartheta$.
  Let $C$ be a conic contained in $X$. By \eqref{eq:UJC}, there exists
  a unique (up to scalar) morphism $\delta : \UU \to \cI_C$.
  Let $I$ be the image of $\delta$ and $K$ its kernel.
  Note that $I$ is torsionfree of rank $1$ so that $K$ is reflexive of
  rank $2$, so $c_3(K)\geq 0$.
  Moreover, by stability of $\UU$ we must have $c_1(I)=0$ and
  $c_1(K)=-1$, and one easily sees that $K$ is stable too.
  Thus $c_2(K)=c_2(K(1)) \geq 6$ by \cite[Lemma
  3.1]{brambilla-faenzi:genus-7}, which easily implies $c_2(I) \leq 2$.
  But $I$ is included in $\cI_C$, hence $c_2(I) \geq 2$, and we get $c_2(I) =
  2$, $c_2(K(1)) = 6$.
  So, \cite[Proposition 3.5]{brambilla-faenzi:genus-7} implies
  $c_3(K)=c_3(I)=0$. We conclude that $I = \cI_C$ (so $\delta$ is
  surjective) and $K = \EE_y^*$ for some $y \in \Gamma$.
  We associate then to the conic $C$ the point $\xi=([\alpha],y)\in \p(\cV)$,
  where $\alpha$ is the transpose of the inclusion of $K$ in
  $\UU$. This provides an algebraic inverse to $\vartheta$ and completes
  the proof.
\end{proof}

\begin{lem}
  We have a natural isomorphism $\phx(\UU(1))[-1] \cong \phs(\UU^{*})$.
  In particular, we get $\det(\cV^{*}) \cong \omega_{\Gamma}(-N)$,
  where $c_{1}(\EE)=H_{X}+N$.
\end{lem}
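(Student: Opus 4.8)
The plan is to deduce the isomorphism by applying the right adjoint $\phx$ to the canonical triangle attached to $\ph(\phs(\UU^{*}))$, and then to read off the determinant from an explicit computation of $\phx(\UU(1))[-1]$. The single fact that will make everything collapse is the vanishing $\phx(\UU^{*})=0$.

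First I would record, from Lemma \ref{pallone}, that $\ph(\phs(\UU^{*}))$ has exactly two nonzero cohomology sheaves, $\cH^{0}\cong\UU^{*}$ and $\cH^{1}\cong\UU(1)$. Its truncation triangle is then
\[
\UU^{*}\rr\ph(\phs(\UU^{*}))\rr\UU(1)[-1]\rr\UU^{*}[1].
\]
Next I apply the triangulated functor $\phx$. Since $\ph$ is fully faithful, the unit $\mathrm{id}\rr\phx\ph$ is invertible, so the middle term is carried to $\phs(\UU^{*})$ and the triangle becomes
\[
\phx(\UU^{*})\rr\phs(\UU^{*})\rr\phx(\UU(1))[-1]\rr\phx(\UU^{*})[1].
\]
Hence the proof reduces to showing $\phx(\UU^{*})=0$. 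By the projection formula $\phx(\UU^{*})\cong\RR q_{*}(p^{*}\UU^{*}\otimes\EE^{*})\otimes\omega_{\Gamma}[1]$, so by cohomology and base change it is enough that $\HH^{k}(X,\UU^{*}\otimes\EE_{y}^{*})=\Ext^{k}_{X}(\UU,\EE_{y}^{*})$ vanish for all $k$ and all $y\in\Gamma$; this is precisely Lemma \ref{E*U} applied to $F=\EE_{y}\in\Mo_{X}(2,1,6)$. With both outer terms gone, the triangle yields the natural isomorphism $\phx(\UU(1))[-1]\cong\phs(\UU^{*})$.

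For the determinant I would compute $\phx(\UU(1))[-1]$ directly. Since $\EE$ has rank $2$ with $\det(\EE)=H_{X}+N$, one has $\EE^{*}\cong\EE(-1)\otimes q^{*}\OO_{\Gamma}(-N)$; substituting this into the definition of $\phx$, the two twists by $\OO_{X}(\pm 1)$ cancel and the projection formula gives
\[
\phx(\UU(1))[-1]\cong\RR q_{*}(p^{*}\UU\otimes\EE)\otimes\omega_{\Gamma}(-N)\cong\cV\otimes\omega_{\Gamma}(-N),
\]
using Proposition \ref{coniche}. Combined with the isomorphism above and with $\phs(\UU^{*})\cong\cV^{*}$ from \eqref{eq:V*}, this gives $\cV^{*}\cong\cV\otimes\omega_{\Gamma}(-N)$, and comparing with the tautological rank-$2$ duality $\cV^{*}\cong\cV\otimes\det(\cV)^{-1}$ produces $\det(\cV^{*})\cong\omega_{\Gamma}(-N)$.

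The isomorphism is essentially free once Lemmas \ref{pallone} and \ref{E*U} are available, so the hard part will be this very last comparison. The relation $\cV^{*}\cong\cV\otimes\omega_{\Gamma}(-N)$ by itself only forces $\det(\cV)^{-1}$ and $\omega_{\Gamma}(-N)$ to coincide after squaring, i.e. up to a $2$-torsion line bundle, because a stable rank-$2$ bundle can be isomorphic to a nontrivial $2$-torsion twist of itself; stability together with the degree count ($\deg\det(\cV^{*})=-1=\deg\omega_{\Gamma}(-N)$) does not by itself rule this out. To pin the class down exactly I would either verify that the natural isomorphism agrees with the canonical determinant pairing of $\cV$, or---more safely---compute $c_{1}(\cV)$ outright by Grothendieck--Riemann--Roch from the Chern data of $\UU$ and $\EE$ (recall $\deg N=5$). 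This Chern-class bookkeeping is the one genuinely computational point of the argument.
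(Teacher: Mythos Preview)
Your argument for the isomorphism $\phx(\UU(1))[-1]\cong\phs(\UU^{*})$ is correct and takes a genuinely different route from the paper. The paper first notes (via Grothendieck duality, which amounts to your rank-$2$ identity $\EE^{*}\cong\EE(-H_X)\otimes q^{*}\OO_{\Gamma}(-N)$) that $\phx(\UU(1))[-1]\cong\cV\otimes\omega_{\Gamma}(-N)$, then invokes the stability of $\cV^{*}=\phs(\UU^{*})$ so that any nonzero map between two rank-$2$ stable bundles of equal slope must be an isomorphism; the nonzero map is produced via
\[
\Hom_{\Gamma}(\phx(\UU(1))[-1],\phs(\UU^{*}))\cong\Hom_{X}(\UU(1),\ph(\phs(\UU^{*}))[1])\cong\End_X(\UU(1)),
\]
using Lemma~\ref{pallone}. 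You instead apply $\phx$ to the truncation triangle of $\ph(\phs(\UU^{*}))$ furnished by Lemma~\ref{pallone} and kill both outer terms with the vanishing $\phx(\UU^{*})=0$, deduced from Lemma~\ref{E*U}. This is cleaner: it avoids any appeal to the stability of $\cV$ (which the paper asserts but does not prove within Proposition~\ref{coniche}) and produces the isomorphism directly as the middle arrow of the triangle.

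For the determinant, you and the paper reach exactly the same relation $\cV^{*}\cong\cV\otimes\omega_{\Gamma}(-N)$ and then compare with $\cV^{*}\cong\cV\otimes\det(\cV)^{-1}$. You are right that this alone pins down $\det(\cV^{*})$ only up to a $2$-torsion class: a stable rank-$2$ bundle can be isomorphic to a nontrivial $2$-torsion twist of itself. The paper's phrase ``since $\cV$ has rank $2$'' passes over precisely this point. Note, however, that your proposed remedy via Grothendieck--Riemann--Roch computes only the degree of $\det(\cV)$, not its class in $\Pic(\Gamma)$, so it does not by itself resolve the ambiguity; one needs either to check that the constructed isomorphism is the one induced by the canonical pairing $\cV\otimes\cV\to\det(\cV)$, or to identify $\det(\cV)$ independently. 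In any event, your argument is at least as complete as the paper's on this count, and your flag is well placed.
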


\begin{proof}
  By Grothendieck duality \eqref{eq:duality}, we get a natural isomorphism:
  \[
  \cV \cong \phs(\UU^{*})^{*} \cong \phx(\UU(1)) \ts
  \omega_{\Gamma}^{*}(N)[-1],
  \]
  and since $\cV$ has rank $2$, the second statement thus follows from the
  first one.

  In view of Proposition \ref{coniche}, the rank $2$ bundle $\phs(\UU^{*})$ is
  stable.
  Thus we only need to show that there is a nonzero morphism from
  $\phx(\UU(1))[-1]$ to $\phs(\UU^{*})$.
  Thus we compute:
  \[
  \begin{split}
  \Hom_{\Gamma}(\phx(\UU(1))[-1],\phs(\UU^{*})) \cong
  \Hom_{X}(\UU(1),\ph(\phs(\UU^{*}))[1]) \cong \Hom_{X}(\UU(1),\UU(1)),
  \end{split}
  \]
  where the last isomorphism follows from Lemma \ref{pallone}.
  This concludes the proof.
\end{proof}

\begin{rmk}
In view of the previous results, we can identify $\cV$ with a
twist of the stable rank $2$ of degree $3$, defined by Iliev
in \cite[Section 5]{iliev:sp3}. Let
$K_{\Gamma}=c_{1}(\omega_{\Gamma})$ and recall that by Mukai's
theorem, \cite{mukai:brill-noether}, $X$ is isomorphic to the {\em type
II Brill-Noether locus}:
\[
\Mo_{\Gamma}(2,K_{\Gamma},3 \cV) = \{\cF \in
\Mo_{\Gamma}(2,c_{1}(\cV)+K_{\Gamma}) \, \lvert \,
\hh^{0}(\Gamma,\cF \ts \cV^{*}) \geq 3 \}.
\]

Therefore, the bundle $\sE$ is universal also for the moduli space $X
\cong \Mo_{\Gamma}(2,K_{\Gamma},3 \cV)$. 
\end{rmk}

\subsection{Lines contained in $X$}

Here we focus on lines contained in $X$.
We describe their Hilbert scheme in the next proposition
as a certain Brill-Noether locus of the Picard variety $\Pic^2(\Gamma)$ of line
bundles of degree $2$ on $\Gamma$.

\begin{prop} \label{brillrette}
  Let $L$ be a line contained in $X$.
  Then we have a functorial exact sequence:
  \begin{equation}
    \label{eq:resL}
    0 \to \OO_{X} \to A_{L} \ts \UU^{*} \xr{\zeta_{L}} \ph(\phx(\OO_{L}(-1))) \to \OO_{L}(-1)
    \to 0,
  \end{equation}
  where $A_{L} = \HH^{1}(L,\UU^{*}(-2))$ has dimension $2$.
  Moreover, the map
  \[
  \psi : L\mapsto \phx(\OO_{L}(-1))
  \]
  gives an isomorphism
  of the Hilbert scheme $\sH^{0}_{1}(X)$ onto a union $W$ of
  components of the locus: 
  \begin{equation}
    \label{eq:WL2}
    \{\cL \in \Pic^{2}(\Gamma) \, \lvert \, \hh^{0}(\Gamma,\cV \ts
    \cL) \geq 2\}. 
  \end{equation}
\end{prop}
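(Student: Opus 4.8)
The plan is to produce the four-term sequence \eqref{eq:resL} by feeding $F=\OO_L(-1)$ into the functorial triangle \eqref{triangolo}, and then to read off $\cL:=\phx(\OO_L(-1))$ as a point of the Brill--Noether locus \eqref{eq:WL2}. First I would pin down the restriction of $\UU$ to $L\cong\p^1$: restricting the universal sequence \eqref{eq:universal-9} exhibits $\UU_{L}$ as a degree $-1$ subbundle of $\OO_L^{\oplus 6}$ and $\UU^{*}_{L}$ as a degree $1$ quotient of $\OO_L^{\oplus 6}$, so both are balanced, $\UU^{*}_{L}\cong\OO_L(1)\oplus\OO_L^{\oplus 2}$ and $\UU_{L}\cong\OO_L(-1)\oplus\OO_L^{\oplus 2}$. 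In particular $\HH^0(L,\UU^{*}(-2))=0$ while $A_L=\HH^1(L,\UU^{*}(-2))$ has dimension $2$. Serre duality on $X$ then gives $\Ext^k_X(\OO_L(-1),\OO_X)\cong\HH^{3-k}(L,\OO_L(-2))^{*}$, equal to $\C$ for $k=2$ and $0$ otherwise, and $\Ext^k_X(\OO_L(-1),\UU)\cong\HH^{3-k}(L,\UU^{*}_{L}(-2))^{*}$, equal to $A_L^{*}$ for $k=2$ and $0$ otherwise. Substituting into \eqref{quadrato} identifies $\mathbf{\Psi}(\mathbf{\Psi^*}(\OO_L(-1)))$ with the two-term complex $[\OO_X\to A_L\ts\UU^{*}]$ placed in degrees $-2,-1$.

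Next I would check that $\ph(\phx(\OO_L(-1)))$ is an honest sheaf. Since each $\EE_y$ is globally generated, its restriction $(\EE_y)_L\cong\OO_L(1)\oplus\OO_L$ is uniform in $y$; hence $\HH^0(L,(\EE_y)^{*}_{L}(-1))=0$ and $\hh^1=1$ for every $y$, so by cohomology and base change $\cL=\phx(\OO_L(-1))$ is a line bundle on $\Gamma$, of degree $2$ by Grothendieck--Riemann--Roch. As $\EE_x$ has degree $5$ (Remark \ref{convenzione}), a slope count gives $\hh^1(\Gamma,\cL\ts\EE_x)=0$ for all $x$, so $\ph(\cL)=\RR p_*(q^{*}\cL\ts\EE)$ is a rank-$5$ vector bundle. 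Knowing $G:=\ph(\phx(\OO_L(-1)))$ is a sheaf, the long exact cohomology sequence of \eqref{triangolo} forces $\OO_X\to A_L\ts\UU^{*}$ to be injective (its kernel is squeezed between two zero groups) and returns exactly \eqref{eq:resL}, with $\zeta_L$ the induced map; functoriality is inherited from that of \eqref{triangolo}.

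For the Brill--Noether picture I would compute $\hh^0(\Gamma,\cV\ts\cL)$. Using $\cV^{*}\cong\phs(\UU^{*})$ from \eqref{eq:V*} and the adjunction $\Hom_\Gamma(a,\phx(b))\cong\Hom_X(\ph(a),b)$,
\[
\HH^0(\Gamma,\cV\ts\cL)\cong\Hom_\Gamma(\cV^{*},\cL)\cong\Hom_X(\ph(\phs(\UU^{*})),\OO_L(-1)).
\]
Lemma \ref{pallone} presents $\ph(\phs(\UU^{*}))$ through the triangle $\UU^{*}\to\ph(\phs(\UU^{*}))\to\UU(1)[-1]$; since $\Hom_X(\UU^{*},\OO_L(-1))=\HH^0(L,\UU_{L}(-1))=0$ and $\Ext^1_X(\UU(1),\OO_L(-1))=\HH^1(L,\UU^{*}_{L}(-2))=A_L$, the associated long exact sequence yields $\HH^0(\Gamma,\cV\ts\cL)\cong A_L$, of dimension $2$. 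Thus $\cL$ lies in \eqref{eq:WL2}, and carrying out $\phx$ relatively over the universal line defines a morphism $\psi:\sH^{0}_{1}(X)\to\Pic^{2}(\Gamma)$ landing in $W$. Because $A_L=\HH^0(\Gamma,\cV\ts\cL)$ and the entire sequence \eqref{eq:resL} are recovered functorially from $\cL$, the line bundle $\cL$ determines $\OO_L(-1)=\cok(\zeta_L)$ and hence $L$, so $\psi$ is injective.

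Finally I would analyse the differential of $\psi$. Semiorthogonality of \eqref{eq:DX} gives $\Ext^{\bullet}_X(G,P)=0$ with $P=\mathbf{\Psi}(\mathbf{\Psi^*}(\OO_L(-1)))$, whence $\Ext^1_X(G,G)\cong\Ext^1_X(G,\OO_L(-1))$; full faithfulness of $\ph$ identifies this with $\Ext^1_\Gamma(\cL,\cL)=\HH^1(\Gamma,\OO_\Gamma)=T_{[\cL]}\Pic^{2}(\Gamma)$. A direct computation from the description of $P$ gives $\Ext^1_X(P,\OO_L(-1))=0$, so the natural map
\[
d\psi:\ T_{[L]}\sH^{0}_{1}(X)=\Ext^1_X(\OO_L(-1),\OO_L(-1))\longrightarrow\Ext^1_X(G,\OO_L(-1))\cong T_{[\cL]}\Pic^{2}(\Gamma)
\]
is injective. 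With $\psi$ injective, unramified, and proper, it is a closed immersion onto its image in $W$. The main obstacle is the final identification: to conclude that the image is a \emph{union of components} of \eqref{eq:WL2}, I must match $d\psi$ with an isomorphism onto the Brill--Noether tangent space $T_{[\cL]}W$, computed as the left kernel of the cup-product $\HH^1(\Gamma,\OO_\Gamma)\ts\HH^0(\Gamma,\cV\ts\cL)\to\HH^1(\Gamma,\cV\ts\cL)$, and identify the latter with $\HH^0(L,N_{L,X})$. Since both $\sH^{0}_{1}(X)$ and \eqref{eq:WL2} are pure of dimension $1$ (the expected Brill--Noether dimension), this forces the image to be open as well as closed, hence a union of components; controlling the special lines, where $N_{L,X}$ jumps and $\sH^{0}_{1}(X)$ may be non-reduced, is the delicate point.
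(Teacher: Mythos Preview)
Your derivation of \eqref{eq:resL} and your proof that $\cL:=\phx(\OO_L(-1))$ lies in the locus \eqref{eq:WL2} with $\hh^0(\Gamma,\cV\ts\cL)=2$ are correct and essentially parallel the paper's argument. The only stylistic difference is that you compute $\hh^0(\Gamma,\cV\ts\cL)$ via Lemma~\ref{pallone} and adjunction, whereas the paper applies $\Hom_X(\UU^*,-)$ directly to \eqref{eq:resL}; both work. Your injectivity argument for $\psi$ (recovering $L$ from $\cok\zeta_L$) is the same as the paper's.

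The genuine gap is the final step, which you yourself flag. Showing that $d\psi$ injects into $T_{[\cL]}\Pic^2(\Gamma)$ only gives a closed immersion $\sH^0_1(X)\hookrightarrow W$; it does not show the image is a union of components of $W$. Your dimension workaround does not close this: the expected Brill--Noether dimension is merely a lower bound, so there is no a priori reason \eqref{eq:WL2} is pure of dimension $1$, and even granting equal dimensions a closed immersion between $1$-dimensional schemes need not be an open immersion (think of the reduced line inside a thickened one). What you must prove is that $d\psi$ surjects onto $T_{[\cL]}W$, not just into $T_{[\cL]}\Pic^2(\Gamma)$.

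The paper resolves this in one stroke by identifying, after applying $\phs$, the map $\zeta_L$ of \eqref{eq:resL} with the evaluation $e_{\cV^*,\cL}:A_L\ts\cV^*\to\cL$. Applying $\Hom_X(-,\OO_L(-1))$ to \eqref{eq:resL} and using adjunction then yields a commutative square
\[
\xymatrix@C+8ex{
\Ext^1_X(\ph(\cL),\OO_L(-1)) \ar^-{\Ext^1(\zeta_L,\OO_L(-1))}[r] \ar[d]^{\cong} &
A_L^*\ts\Ext^1_X(\UU^*,\OO_L(-1)) \ar[d]^{\cong} \\
\Ext^1_\Gamma(\cL,\cL) \ar^-{\Ext^1(e_{\cV^*,\cL},\cL)}[r] &
A_L^*\ts\Ext^1_\Gamma(\cV^*,\cL),
}
\]
whose top row has kernel and cokernel $\Ext^1_X(\OO_L,\OO_L)$ and $\Ext^2_X(\OO_L,\OO_L)$ (the tangent and obstruction spaces of $\sH^0_1(X)$ at $[L]$), and whose bottom row has kernel and cokernel the tangent and obstruction spaces of $W$ at $[\cL]$. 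This identifies the deformation theories and shows $\psi$ is an isomorphism onto a union of components of $W$, with no appeal to dimension counts or genericity of $L$. You have all the ingredients for this diagram already; the missing move is recognising that $\phs(\zeta_L)=e_{\cV^*,\cL}$.
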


\begin{proof}
  Recall that, for each $y\in \Gamma$, the sheaf $\EE_{y}$ is a
  globally generated bundle with $c_{1}(\EE_{y})=1$.
  Thus, it splits over $L$ as $\OO_{L} \oplus \OO_{L}(1)$.
  It follows that $\phx(\OO_{L}(-1))$ is a sheaf concentrated in degree $0$, and
  its rank equals $\hh^{0}(L,\EE_{y}^{*})=1$.
  Its degree is computed by Grothendieck-Riemann-Roch formula.

  To get \eqref{eq:resL}, we use \eqref{triangolo} and \eqref{quadrato}.
  We have thus to compute the cohomology groups
  $\Ext^{k}_{X}(\OO_{L}(-1),\OO_{X})$ and $\Ext^{k}_{X}(\OO_{L}(-1),\UU)$.
  Note that $\UU^{*}$ splits over $L$ as $\OO_{L}^{2} \oplus \OO_{L}(1)$.
  So, using Serre duality, we see that 
  $\Ext^{k}_{X}(\OO_{L}(-1),\OO_{X})=\Ext^{k}_{X}(\OO_{L}(-1),\UU)=0$
  for $k \neq 2$, while for $k=2$ we have $\ext^{2}_{X}(\OO_{L}(-1),\OO_{X})=1$
  and $\ext^{2}_{X}(\OO_{L}(-1),\UU)=2$.
  Setting $A_{L} = \HH^{1}(L,\UU^{*}(-2)) \cong
  \Ext^{2}_{X}(\OO_{L}(-1),\UU)^{*}$, 
  we obtain the functorial resolution \eqref{eq:resL} and
  $\dim(A_L)=2$.

  Set $\cL = \phx(\OO_{L}(-1))$, and recall the isomorphism \eqref{eq:V*}.
  Applying the functor $\Hom_{X}(\UU^{*},-)$ to the long exact
  sequence \eqref{eq:resL}, since $\UU$ is exceptional, and both
  $\Hom_{X}(\UU^{*},\OO_{X})$ and $\Hom_{X}(\UU^{*},\OO_{L}(-1))$ vanish,
  we get a natural isomorphism:
  \[
  \Hom_{\Gamma}(\cV^{*},\cL) \cong \Hom_{X}(\UU^{*},\ph(\cL)) \cong A_{L}.
  \]

  Therefore, the line bundle $\cL$ lies in the locus
  defined by \eqref{eq:WL2}, and actually we have $\hh^{0}(\Gamma,\cV
  \ts \cL) = 2$. 
  Moreover, up to multiplication by a nonzero scalar, the morphism
  $\zeta_{L}$ coincides 
  with the natural evaluation map $e_{\UU^{*},\ph(\cL)}$. 
Thus, the mapping $L\mapsto \phx(\OO_{L}(-1))$ is injective, since
$\OO_L(-1)$ can be recovered as $\cok(\zeta_L)$. 

  Now we shall identify the tangent space of $\sH^{0}_{1}(X)$ at the
  point $[L]$ with
  that of the component $W$, at the point $[\cL]$.
  Note that the morphism $\phs(\zeta_{L})$ must agree with
  the natural evaluation map
  \begin{equation}
    \label{eq:ev}
e_{\cV^*,\cL}:    A_{L} \ts \cV^{*} \to \cL.
  \end{equation}
  
  Remark also that the tangent space to $W$ at the point $[\cL]$ is
  computed as the kernel of the map obtained applying the functor 
  $\Ext^{1}_{\Gamma}(-,\cL)$ to \eqref{eq:ev}.

  Applying the functor $\Hom_{X}(-,\OO_{L}(-1))$ to \eqref{eq:resL},
  and using the obvious vanishing $\HH^{k}(L,\OO_{X}(-1))=0$ for all
  $k$, we obtain a commutative exact diagram:
  \begin{equation}
    \label{eq:diagramma-petri}
    \xymatrix@C+10ex{      
\Ext^{1}_{X}(\ph(\cL),\OO_{L}(-1))
\ar^-{\Ext^{1}(\zeta_{L},\OO_{L}(-1))}[r] \ar[d]^{\cong} & 
      A_{L}^{*} \ts \Ext^{1}_{X}(\UU^{*},\OO_{L}(-1)) \ar[d]^{\cong} \\
       \Ext^{1}_{\Gamma}(\cL,\cL) \ar^-{\Ext^{1}(e_{\cV^*,\cL},\cL)}[r] &
      A_{L}^{*} \ts \Ext^{1}_{\Gamma}(\cV^{*},\cL). 
     }
  \end{equation}

  Here, the kernel (respectively, the cokernel) of
  $\Ext^{1}(\zeta_{L},\OO_{L}(-1))$ is naturally 
  identified with the tangent space $T_{[L]}\sH^{0}_{1}(X) \cong
  \Ext^{1}_{X}(\OO_{L},\OO_{L})$, 
  (respectively, with the obstruction space $\Ext^{2}_{X}(\OO_{L},\OO_{L})$).
  Thus, the diagram \eqref{eq:diagramma-petri} allows to identify the
  tangent space 
  (and the obstruction space) of $\sH^{0}_{1}(X)$ at $[L]$ with
  those of $W$ at $\cL$.
\end{proof}

\begin{rmk} \label{rmk:normal}
  Let $L$ be a line contained in $X$ and set $\cL = \phx(\OO_{L}(-1))$.
  Note that the normal sheaf $\cN_{W}$ at the point $[\cL]$ to the
  subscheme $W$ of $\Pic^2(\Gamma)$  
  is naturally identified with $A_{L}^{*} \ts
  \Ext^{1}_{\Gamma}(\cV^{*},\cL)$, where $A_L$ is canonically
  isomorphic to $\Hom_{\Gamma}(\cV^{*},\cL)$.
  Since $\dim(A_L)=2$ and since $\ext^{1}_{\Gamma}(\cV^{*},\cL) =
  \hh^{1}(L,\UU(-1))=1$, the sheaf $\cN_{W}$ is in fact locally
  free of rank $2$, and its fibre over $[\cL]$ can be identified
  (up to twist by a line bundle on $W$) with $A^*_L$.
\end{rmk}

\begin{rmk} 
  Thanks to the Mukai's interpretation of a line contained in $X$ as
  a locus inside $\Mo_{\Gamma}(2,K_{\Gamma},3 \cV)$, see \cite[Section
  9]{mukai:brill-noether}, it is possible to prove that the map $\psi$
  is actually surjective onto the locus $\{\cL \in \Pic^{2}(\Gamma) \,
  \lvert \, \hh^{0}(\Gamma,\cV \ts \cL) \geq 2\}$. 
\end{rmk}

The following lemmas will be needed further on.

\begin{lem} \label{5}
  Let $L$ be a line contained in $X$. Then we have a natural isomorphism:
  \begin{equation}
    \label{eq:UJL}
    \Hom_{X}(\UU,\cI_{L}) \cong A_{L}^{*}.
  \end{equation}

  The set $S_{L}$ of surjective morphisms $\gamma : \UU \to
  \cI_{L}$ is open and dense in $\p(A_{L})$.
  The subscheme $\p(A_{L}) \setminus S_{L}$ is
  in natural bijection with the length $5$ scheme of reducible
  conics $D\subset X$ which contain $L$.
  For a map $\gamma$ with $[\gamma] \in \p(A_{L}) \setminus S_{L}$,
  we have $\im(\gamma) = \cI_{D}$.
\end{lem}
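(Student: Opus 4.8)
The plan is to establish the isomorphism first, then analyse the image of an arbitrary map, and finally carry out the enumeration, which I expect to be the crux.

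\emph{Proof of \eqref{eq:UJL}.} I would apply $\Hom_X(\UU,-)$ to $0\to\cI_{L}\to\OO_X\to\OO_L\to0$. Since $\UU$ is stable and ACM (Lemma \ref{stabU}), the universal sequence \eqref{eq:universal-9} gives $\HH^0(X,\UU^*)\cong V$ and $\HH^k(X,\UU^*)=0$ for $k\geq1$; as $\UU^*$ restricts to $L$ as $\OO_L^2\oplus\OO_L(1)$, one gets $\Hom_X(\UU,\OO_L)=\HH^0(L,\UU^*|_L)$ of dimension $4$ and $\Ext^k_X(\UU,\OO_L)=0$ for $k\geq1$. The restriction $V\to\HH^0(L,\UU^*|_L)$ is surjective, because the kernel of the restricted evaluation $V\ts\OO_L\to\UU^*|_L$ is $\UU|_L\cong\OO_L^2\oplus\OO_L(-1)$, which has no $\HH^1$. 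Hence $\Ext^{\geq1}_X(\UU,\cI_{L})=0$ and $\hom_X(\UU,\cI_{L})=2$, and this $2$-dimensional space is identified with $A_L^*$ through Serre duality and the description of $A_L$ given in Proposition \ref{brillrette}.

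\emph{The image of a map.} For a nonzero $\gamma:\UU\to\cI_{L}$ put $I=\im(\gamma)$, $K=\ker(\gamma)$. Stability of $\UU$ forces $\rk(K)=2$, $c_1(K)=-1$, $c_1(I)=0$, and $K$ is stable reflexive exactly as in the proof of Proposition \ref{coniche}. Writing $I=\cI_{W}$ with $L\subseteq W$, the bound $c_2(K(1))\geq6$ of \cite[Lemma 3.1]{brambilla-faenzi:genus-7}, together with $c_2(K(1))=c_2(K)=(8-\deg W)\,L_X$, forces $\deg W\in\{1,2\}$. If $\deg W=1$ then $W=L$ (no embedded points, as in Lemma \ref{lunga}) and $\gamma$ is surjective; if $\deg W=2$ then $K(1)\in\Mo_X(2,1,6)$, so $K=\EE_y^*$ and $W=D$ is a reducible conic containing $L$, with $\im(\gamma)=\cI_{D}$. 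Conversely the unique map $\UU\to\cI_{D}$ of Lemma \ref{UC}, which is surjective by Proposition \ref{coniche}, composed with $\cI_{D}\subset\cI_{L}$, yields a non-surjective element of $\p(A_L)$. This gives the bijection between $\p(A_L)\setminus S_{L}$ and the reducible conics through $L$, and exhibits $S_L$ as the surjectivity locus, which is open.

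\emph{The count, and the main obstacle.} The hard part is the length $5$. My plan is to transport the question to $\Gamma$ by the functors of Proposition \ref{brillrette}: setting $\cL=\phx(\OO_L(-1))$ one has $A_L\cong\HH^0(\Gamma,\cV\ts\cL)$, which is $2$-dimensional, and the reducible conics through $L$ should correspond, as schemes, to the degeneracy locus of the evaluation $A_L\ts\OO_\Gamma\to\cV\ts\cL$. Since this is a map of rank-$2$ bundles, its degeneracy locus is the zero scheme of $s_1\wedge s_2\in\HH^0(\Gamma,\det(\cV\ts\cL))$, where $s_1,s_2$ span $A_L$; and $\deg\det(\cV\ts\cL)=\deg(\cV)+2\deg(\cL)=1+4=5$ by Proposition \ref{coniche}, so the length is $5$ as soon as $s_1\wedge s_2\not\equiv0$. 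For this last point, and hence for density of $S_L$, I would note that if the two sections were everywhere proportional they would factor through a sub-line-bundle $\cM\subset\cV\ts\cL$ with $\hh^0(\cM)\geq2$; stability of $\cV\ts\cL$, of slope $5/2$, gives $\deg\cM\leq2$, so $\cM$ would be a $g^{1}_{2}$, which is impossible since the plane quartic $\Gamma$ is not hyperelliptic. The genuine difficulty I anticipate is the scheme-theoretic matching of the two degeneracy descriptions --- non-surjectivity of $\gamma$ on $X$ and dependence of $s_1,s_2$ on $\Gamma$ --- so that the computation on $\Gamma$ counts the reducible conics with their correct multiplicities.
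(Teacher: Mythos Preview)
Your treatment of the isomorphism \eqref{eq:UJL} and of the image of an arbitrary $\gamma$ is correct and tracks the paper closely. The paper's chain for \eqref{eq:UJL} is slightly more direct --- it passes through $\HH^{1}(X,\cI_{L}\ts\UU)\cong\HH^{0}(L,\UU|_{L})\cong\HH^{1}(L,\UU^{*}(-2))^{*}$ via \eqref{eq:universal-9} and Serre duality on $L$ --- but the content is the same, and your dichotomy $c_{2}(K)\in\{6,7\}$ and the identification of the non-surjective locus with reducible conics through $L$ reproduce the paper's argument.

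The genuine divergence is in the length-$5$ count and the density of $S_{L}$. The paper does \emph{not} transport the problem to $\Gamma$. Instead it cites the classical enumerative fact (Iskovskih, \cite{iskovskih:II}) that a general line on $X$ lies on exactly five reducible conics, and then argues that the length is constant once one knows the complement $\p(A_{L})\setminus S_{L}$ is finite for every $L$; finiteness is obtained by contradiction using $\hom_{X}(\UU,\cI_{D})=1$ from Lemma~\ref{UC}. Your route --- identifying the non-surjective locus with the degeneracy scheme of $A_{L}\ts\OO_{\Gamma}\to\cV\ts\cL$ and reading off the length as $\deg\det(\cV\ts\cL)=5$ --- is more intrinsic and avoids the external citation; the non-hyperellipticity argument for $s_{1}\wedge s_{2}\not\equiv0$ is correct and gives density directly. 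What your approach buys is a self-contained computation valid for every line at once; what it costs is exactly the difficulty you flag: one must check that a non-surjective $[\gamma]$ with kernel $\EE_{y}^{*}$ corresponds, as a \emph{scheme point}, to the vanishing of $s_{1}\wedge s_{2}$ at $y$, so that the degree-$5$ divisor on $\Gamma$ really computes the length of $\p(A_{L})\setminus S_{L}$. This matching is plausible via the adjunction $\Hom_{X}(\UU^{*},\ph(-))\cong\Hom_{\Gamma}(\cV^{*},-)$ and the resolution \eqref{eq:resL}, but it is not automatic and would need to be written out. The paper sidesteps this by trading the scheme-theoretic comparison for an appeal to the literature plus a short finiteness check.
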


\begin{proof}
  To get the first statement, we use \eqref{eq:universal-9}
  and we obtain the following natural isomorphisms:
  \[
  \begin{split}
  \Hom_{X}(\UU,\cI_{L}) \cong \HH^{0}(X,\cI_{L} \ts \UU^{*}) \cong
  \HH^{1}(X,\cI_{L} \ts \UU) \cong  
  \HH^{0}(L,\UU) \cong
  \HH^{1}(L,\UU^{*}(-2))^{*} = A_{L}^{*}.
  \end{split}
  \]

  Let now $\gamma$ be a map in $\Hom_{X}(\UU,\cI_{L})$.
  We work as in Proposition \ref{coniche}, and we denote
  $I=\im(\gamma)$, $K=\ker(\gamma)$.
  By stability of $\UU$, the subsheaf $I$ of $\cI_{L}$ has
  trivial determinant.
  Thus $K$ is a reflexive sheaf of rank $2$ with
  $c_{1}=-1$, hence $c_3(K)\geq 0$.
  It is easy to see that $K$ is stable, so
  $c_{2}(K) \geq 6$.
  On the other hand, we have $c_{2}(K) =
  8-c_{2}(I) \leq 7$, so $c_{2}(K)$ equals $6$ or
  $7$.
  If $c_{2}(K)=7$, it follows that $c_{3}(I)\le -1$.
  Then $c_i(I)=c_i(\cI_L)$ for all $i$, so $I=\cI_L$
  i.e. $\gamma$ is surjective.
  We can now assume $c_{2}(K)=6$ and, by \cite[Proposition
  3.5]{brambilla-faenzi:genus-7}, 
  we have that $K$ is a locally free sheaf, so $c_{3}(K)=0$.
  This gives $c_{3}(I)=0$, so $I \cong \cI_{D}$,
  for some conic $D$.
  This proves the last statement.

  Given two non proportional maps $\gamma_{1}$, $\gamma_{2}$ in
  $\Hom_{X}(\UU,\cI_{L})$, assuming that neither is surjective, we get
  $\im(\gamma_{1}) \not \cong \im(\gamma_{2})$ in view of the
  vanishing \eqref{eq:UE}. 
   Therefore, up to a nonzero scalar, each non surjective map $\gamma$
  determines uniquely a conic $D\supset L$.
  The converse is obvious, so it only remains to check that the
  subscheme $\p(A_{L}) \setminus S_{L}$ has length $5$.
  This is true if $L$ is general, see \cite{iskovskih:II}, so we only
  need to check that the length is always finite.
  But $\p(A_{L})$ contains no infinite proper subschemes, so all
  elements $\gamma$ of $\Hom(\UU,\cI_{L})$ should give $\im(\gamma) =
  \cI_{D}$, so $\hom(\UU,\cI_{D})=2$, contradicting Lemma \ref{UC}.
\end{proof}

\begin{lem} \label{lem:OL}
  Let $L$ be a line contained in $X$.
  Then $\phx(\OO_{L})[-1]$ is a line bundle of degree $1$ on the curve
  $\Gamma$.
\end{lem}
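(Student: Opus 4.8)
The plan is to compute $\phx(\OO_{L})$ directly from formula \eqref{phi!}, using the splitting type of the universal bundle $\EE$ along $L$, and then to read off its degree by comparison with the object $\phx(\OO_{L}(-1))$ already described in Proposition \ref{brillrette}. First I would record the restriction of $\EE$ to $L\times\Gamma$: as recalled in the proof of Proposition \ref{brillrette}, for every $y\in\Gamma$ the bundle $\EE_{y}$ is globally generated of rank $2$ with $c_{1}(\EE_{y})=1$, hence splits on $L\cong\p^{1}$ as $\OO_{L}\oplus\OO_{L}(1)$; dually $\EE^{*}_{y}|_{L}\cong\OO_{L}\oplus\OO_{L}(-1)$, so that $\hh^{0}(L,\EE^{*}_{y}|_{L})=1$ and $\hh^{1}(L,\EE^{*}_{y}|_{L})=0$ uniformly in $y$. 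By cohomology and base change, $\RR q_{*}(\EE^{*}|_{L\times\Gamma})$ is then a line bundle concentrated in degree $0$. Since $p^{*}(\OO_{L})\ts\EE^{*}(\omega_{\Gamma})\cong\EE^{*}|_{L\times\Gamma}\ts q^{*}\omega_{\Gamma}$, the projection formula and the shift in \eqref{phi!} give
\[
\phx(\OO_{L})[-1]\cong \RR^{0}q_{*}\bigl(\EE^{*}|_{L\times\Gamma}\bigr)\ts\omega_{\Gamma},
\]
a line bundle on $\Gamma$; in particular $\phx(\OO_{L})$ itself sits in cohomological degree $-1$, which already settles the locally free part of the statement.

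To compute the degree I would fix a point $p\in L$ and apply the exact functor $\phx$ to the structure sequence $0\to\OO_{L}(-1)\to\OO_{L}\to\OO_{p}\to 0$ on $X$, obtaining a distinguished triangle joining $\phx(\OO_{L}(-1))$, $\phx(\OO_{L})$ and $\phx(\OO_{p})$. Here $\phx(\OO_{L}(-1))=\cL$ is the degree-$2$ line bundle of Proposition \ref{brillrette}, concentrated in degree $0$; and since $q$ restricts to an isomorphism on $\{p\}\times\Gamma$, one gets at once $\phx(\OO_{p})\cong(\EE^{*}_{p}\ts\omega_{\Gamma})[1]$, where $\EE_{p}=\EE|_{\{p\}\times\Gamma}$ is the rank-$2$ bundle on $\Gamma$ of degree $\deg(N)=5$ fixed in Remark \ref{convenzione}. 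Taking the long exact sequence of cohomology sheaves, where only the degrees $-1$ and $0$ are nonzero, the triangle collapses to the short exact sequence
\[
0\to \phx(\OO_{L})[-1]\to \EE^{*}_{p}\ts\omega_{\Gamma}\to \cL\to 0 .
\]
As $\Gamma$ is a smooth plane quartic of genus $3$, we have $\deg(\omega_{\Gamma})=4$, so $\deg(\EE^{*}_{p}\ts\omega_{\Gamma})=-5+2\cdot4=3$ and hence $\deg\bigl(\phx(\OO_{L})[-1]\bigr)=3-2=1$, as claimed.

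I expect the only genuinely delicate point to be the bookkeeping of the cohomological shifts in \eqref{phi!}: one must verify that $\phx(\OO_{L})$ lives in degree $-1$, that $\phx(\OO_{L}(-1))=\cL$ lands in degree $0$, and that $\phx(\OO_{p})$ lives in degree $-1$, so that the triangle really degenerates into the displayed three-term sequence rather than a longer complex. These placements are forced by the fiberwise cohomology computed above --- for $\OO_{L}$ the $\HH^{0}$ survives, for $\OO_{L}(-1)$ the $\HH^{1}$ survives, and for $\OO_{p}$ the pushforward is a genuine rank-$2$ bundle --- and they should be cross-checked against the normalization $\phx(\EE_{y})\cong\OO_{y}$, which is concentrated in degree $0$.
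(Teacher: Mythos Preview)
Your argument is correct. The first half --- showing that $\phx(\OO_{L})[-1]$ is a line bundle via the splitting $\EE_{y}|_{L}\cong\OO_{L}\oplus\OO_{L}(1)$ and base change --- is exactly what the paper does. For the degree, however, the paper simply invokes Grothendieck--Riemann--Roch, whereas you take a more concrete route: you feed the sequence $0\to\OO_{L}(-1)\to\OO_{L}\to\OO_{p}\to 0$ through $\phx$, identify $\phx(\OO_{p})$ with $(\EE^{*}_{p}\ts\omega_{\Gamma})[1]$, and read off the degree from the resulting short exact sequence together with the already-known values $\deg(\EE_{p})=5$ (Remark~\ref{convenzione}) and $\deg(\cL)=2$ (Proposition~\ref{brillrette}). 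Your approach trades a direct GRR calculation for bookkeeping of cohomological degrees and two previously established numbers; it is slightly longer but more transparent, and your discussion of where each object sits in the derived category is accurate. Note, though, that since the degree of $\cL$ in Proposition~\ref{brillrette} was itself obtained by GRR, you are not really avoiding that tool, only deferring to an earlier instance of it.
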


\begin{proof}
  Recall that, for each $y\in \Gamma$, the sheaf $\EE_{y}$ is a
  globally generated bundle with $c_{1}(\EE_{y})=1$.
  Thus, it splits over $L$ as $\OO_{L} \oplus \OO_{L}(1)$.
  It follows that $\phx(\OO_{L})$ is a sheaf concentrated in degree $-1$, and
  its rank equals $\hh^{0}(L,\EE_{y}^{*})=1$.
  Its degree is computed by Grothendieck-Riemann-Roch.
\end{proof}

\section{Stable sheaves of rank $2$ with odd determinant}
\label{sec:stable}

Recall from \cite[Theorem 3.12]{brambilla-faenzi:genus-7} that, for each $c_{2} \geq 7$,
there exists a component $\Mo(c_{2})$ of $\Mo_X(2,1,c_{2})$ containing a
locally free sheaf $F$ which satisfies:
\begin{align}
  \label{eq:ipotesona}
  & \HH^{1}(X,F(-1))=0. \\
  \label{eq:liscio}
  & \Ext^{2}_{X}(F,F) = 0,
\end{align}
and the extra assumption $\HH^{0}(X,F \ts
\OO_{L}(-1))=0$, for some line $L\subset X$ having normal bundle
$\OO_{L} \oplus \OO_{L}(-1)$.
For $c_{2}=6$, we have $\Mo(6)=\Mo_X(2,1,6) \cong \Gamma$.
For $c_{2} \geq 7$, $\Mo(c_{2})$ is defined recursively
as the unique component of $\Mo_X(2,1,c_{2})$ which contains a sheaf $F$
fitting into:
\begin{equation} \label{addline}
  0 \to F \to G \to \OO_{L} \to 0,
\end{equation}
where $G$ is a general sheaf lying in $\Mo(c_{2}-1)$.
Here we are going to prove the following result, which amounts to Part
\ref{A} of our main theorem. 

\begin{thm} \label{Main}
  For any integer $c_{2}\geq 7$, there is a birational map $\varphi$,
  generically defined by $F \mapsto \phx(F)$, from $\Mo(c_{2})$ to a generically smooth
  $(2c_{2}-11)$-dimensional component ${\sf B}(c_{2})$ of the locus:
  \begin{equation} 
    \label{eq:brillc2}
    \{\cF \in \Mo_{\Gamma}(c_{2}-6,c_{2}-5) \, \lvert \,
    \hh^{0}({\Gamma},\cV \ts \cF) \geq c_{2}-6 \}.
  \end{equation}
\end{thm}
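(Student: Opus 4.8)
The plan is to realize $\varphi$ and its inverse through the functorial triangle \eqref{triangolo} attached to the semiorthogonal decomposition \eqref{eq:DX}, running the recursion \eqref{addline} as the inductive engine on $c_{2}$. \emph{Step 1 (well-definedness).} For $F$ general in $\Mo(c_{2})$, so that \eqref{eq:ipotesona} and \eqref{eq:liscio} hold and \eqref{addline} is generic, I claim $\cF:=\phx(F)$ is a stable vector bundle of rank $c_{2}-6$ and degree $c_{2}-5$, concentrated in degree $0$. Concentration and the invariants follow by induction: applying $\phx$ to \eqref{addline} gives a triangle $\phx(F)\to\phx(G)\to\phx(\OO_{L})$, and since Lemma \ref{lem:OL} makes $\phx(\OO_{L})[-1]$ a degree-$1$ line bundle, the cohomology-sheaf sequence collapses to $0\to\phx(\OO_{L})[-1]\to\cF\to\phx(G)\to 0$. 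This raises rank and degree by one at each step, starting from $\phx(\EE_{y})\cong\OO_{y}$ at $c_{2}=6$ (the first step producing the non-split extension, a degree-$2$ line bundle), and local freeness is preserved. Stability I would obtain as an open condition: the sub-line-bundle $\phx(\OO_{L})[-1]$ has slope $<\mu(\cF)$, and the remaining subsheaves are controlled by the genericity of the extension, or alternatively by transporting the simplicity of $F$ through the full faithfulness of $\ph$.

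\emph{Step 2 (the Brill-Noether inequality).} Using $\cV^{*}\cong\phs(\UU^{*})$ from \eqref{eq:V*} and the adjunction $\phs\dashv\ph$, I rewrite $\hh^{0}(\Gamma,\cV\ts\cF)=\hom_{\Gamma}(\cV^{*},\cF)=\hom_{X}(\UU^{*},\ph(\phx(F)))$. Feeding $F$ through \eqref{triangolo} and applying $\Hom_{X}(\UU^{*},-)$, the exceptionality of $\UU^{*}$ and the vanishing of $\HH^{\bullet}(X,\UU)$ reduce this to a cohomology count on $X$, governed by $\hom_{X}(\UU^{*},F)$ and the correction term \eqref{quadrato}, giving the bound $\geq c_{2}-6$.

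\emph{Step 3 (the inverse, and birationality).} This is the technical heart. Given $\cF$ general in a component of \eqref{eq:brillc2} of expected dimension, I reconstruct a sheaf on $X$ by running \eqref{triangolo} and \eqref{quadrato} backwards: $\ph(\cF)$ supplies the $\ph(\D(\Gamma))$-part, while the $c_{2}-6$ sections from the Brill-Noether condition, through the evaluation map $A\ts\UU^{*}\to\ph(\cF)$, supply the $\langle\OO_{X},\UU^{*}\rangle$-part, and $F$ is recovered as the resulting cone. The crux is to verify that this homological recipe yields an honest stable sheaf lying in $\Mo(c_{2})$, not merely a complex, and that $\phx(F)\cong\cF$; this is where I expect the main obstacle, namely controlling the torsion and higher cohomology of the reconstructed object and pinning it to the correct irreducible component. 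Granting this, $\varphi$ is a dominant, generically injective rational map, hence birational in characteristic $0$.

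\emph{Step 4 (dimension and generic smoothness).} By Riemann-Roch, using $\hom_{X}(F,F)=1$ and $\Ext^{2}_{X}(F,F)=\Ext^{3}_{X}(F,F)=0$, one computes $\chi(F,F)=12-2c_{2}$, whence $\Mo(c_{2})$ is smooth of dimension $\ext^{1}_{X}(F,F)=2c_{2}-11$ at its general point. A standard count gives $\dim\Mo_{\Gamma}(c_{2}-6,c_{2}-5)=2(c_{2}-6)^{2}+1$ and expected Brill-Noether codimension $2(c_{2}-6)(c_{2}-7)$, so the expected dimension of \eqref{eq:brillc2} is exactly $2c_{2}-11$. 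Thus the image of $\varphi$ is an irreducible subset of this dimension, dense in a genuine component ${\sf B}(c_{2})$, which inherits generic smoothness from $\Mo(c_{2})$ because $\varphi$ restricts to an isomorphism between dense opens.
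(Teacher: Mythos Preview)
Your overall architecture---use the recursion \eqref{addline} and the triangle \eqref{triangolo} to produce the resolution, then identify the Brill--Noether data via the adjunction $\cV^{*}\cong\phs(\UU^{*})$---is exactly the paper's. Two points, however, are not yet under control.

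\textbf{Stability of $\cF$ (Step 1).} Neither of your proposed shortcuts works. Full faithfulness of $\ph$ gives $\Hom_{\Gamma}(\cF,\cF)\cong\Hom_{X}(\ph(\cF),\ph(\cF))$, not $\Hom_{X}(F,F)$; and in any case simplicity does not imply stability for rank $\geq 2$. ``Genericity of the extension'' is also too vague: a generic non-split extension of stable bundles need not be stable. The paper's argument is numerical. Given a destabilizing $\cK\subset\cF$, intersect with the sub-line-bundle to get $0\to\cK'\to\cK\to\cK''\to 0$ with $\cK''\subset\phx(G)$. If $\cK'=0$ then $\mu(\cK)<\mu(\phx(G))$ by stability of $\phx(G)$ and non-splitness; but $\mu(\phx(G))-\mu(\cF)=\frac{1}{(c_{2}-6)(c_{2}-7)}$, and since $\mu(\cK)$ has denominator at most $c_{2}-7$, it cannot land in the half-open interval $[\mu(\cF),\mu(\phx(G)))$. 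The case $\rk(\cK')=1$ is similar. This gap-in-the-slope-spectrum argument is what you are missing.

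\textbf{Birationality (Step 3).} You have correctly located the obstacle, but you are attacking it from the wrong side. You propose to start from a general $\cF$ in the Brill--Noether locus and prove that $\cok\big(e_{\UU^{*},\ph(\cF)}\big)$ is a stable sheaf in $\Mo(c_{2})$; this is genuinely hard and the paper does not do it. Instead, the paper stays on the source side. First, for $F$ general (so $\Hom_{X}(\UU^{*},F)=0$, which needs a short argument when $c_{2}\geq 8$), applying $\Hom_{X}(\UU^{*},-)$ to the resolution $0\to A_{F}\ts\UU^{*}\xrightarrow{\zeta_{F}}\ph(\cF)\to F\to 0$ forces $A_{F}\cong\Hom_{X}(\UU^{*},\ph(\cF))\cong\HH^{0}(\Gamma,\cV\ts\cF)$, so $\zeta_{F}$ \emph{is} the evaluation map up to scalar and $F=\cok(\zeta_{F})$ is determined by $\cF$. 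This gives injectivity of $\varphi$ on an open set. Second, applying $\Hom_{X}(-,F)$ to the same resolution and comparing with $\Ext^{1}_{\Gamma}(e_{\cV^{*},\cF},\cF)$ yields a commutative square identifying $\Ext^{1}_{X}(F,F)$ and $\Ext^{2}_{X}(F,F)$ with the kernel and cokernel of the Petri-type map; the vanishing \eqref{eq:liscio} then makes $\varphi$ \'etale at $[F]$ onto the Brill--Noether locus. Injective plus \'etale gives an open immersion onto the image, hence birationality onto a component---no need to reconstruct $F$ from an abstract $\cF$. Your Step 4 dimension count is fine and matches this.
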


We begin with a series of lemmas.

\begin{lem} \label{fascio}
  Let $c_{2}\geq 7$, and let $F$ be a sheaf in $\Mo_X(2,1,c_{2})$,
  satisfying \eqref{eq:ipotesona}.
  Then $\phx(F)$ is a vector bundle on $\Gamma$, of rank $c_{2}-6$ and degree $c_{2}-5$.
\end{lem}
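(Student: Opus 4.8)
The plan is to analyse $\phx(F)$ fibrewise over $\Gamma$ and to show that it is a vector bundle concentrated in cohomological degree $0$. Writing $\EE_y$ for the restriction of $\EE$ to $X\times\{y\}$, the sheaf $p^*(F)\ts\EE^*(\omega_\Gamma)$ is flat over $\Gamma$ with fibre $F\ts\EE_y^*\ts(\omega_\Gamma)_y$ over $y$, so by cohomology and base change for $q$ the cohomology sheaf $\cH^{j}(\phx(F))$ is controlled by $\Ext^{j+1}_X(\EE_y,F)\cong\HH^{j+1}(X,\EE_y^*\ts F)$. Hence I first want to prove that $\Ext^k_X(\EE_y,F)=0$ for every $k\neq 1$ and every $y\in\Gamma$, and that $\ext^1_X(\EE_y,F)$ is independent of $y$. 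Descending induction from the top degree then shows that all the $\RR^k q_*$ vanish except for $k=1$, where cohomology and base change yields a locally free sheaf of the constant rank $\ext^1_X(\EE_y,F)$; after the shift by $[1]$ this is precisely $\phx(F)$, a vector bundle in degree $0$.

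Two of the three vanishings are straightforward. For $k=3$, Serre duality gives $\Ext^3_X(\EE_y,F)\cong\Hom_X(F,\EE_y(-1))^*$, which vanishes because $F$ and $\EE_y(-1)$ are stable with $\mu(F)=\tfrac12>-\tfrac12=\mu(\EE_y(-1))$. For $k=0$, a nonzero map $\EE_y\to F$ between stable sheaves of equal rank and equal slope $\tfrac12$ must be injective or surjective; it cannot be surjective, since then its kernel would be a rank-$0$ subsheaf of the bundle $\EE_y$, hence zero, forcing the excluded isomorphism $\EE_y\cong F$; and if it were injective its cokernel would be a torsion sheaf with $c_1=0$ and $c_2=c_2(F)-c_2(\EE_y)=c_2-6\geq 1$, which is impossible since a torsion sheaf supported in codimension $\geq2$ has $c_2\leq0$.

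The crux is the vanishing of $\Ext^2_X(\EE_y,F)\cong\HH^2(X,\EE_y(-1)\ts F)$, and this is exactly where hypothesis \eqref{eq:ipotesona} will be used; I expect this to be the main obstacle. Taking $F=\EE_y$ in Lemma \ref{lunga}, twisting the resulting four-term sequence by $\OO_X(-1)$ and tensoring by $F$ gives
\[
0\to F(-1)\to\UU^*(-1)\ts F\to\EE_y(-1)\ts F\to\OO_C(-1)\ts F\to 0,
\]
which I split through the image into two short exact sequences. Since $\OO_C(-1)\ts F$ is supported on the conic $C$, its $\HH^2$ vanishes, so $\HH^2(X,\EE_y(-1)\ts F)$ is a quotient of $\HH^2$ of the image term, which in turn is squeezed between $\HH^2(X,\UU^*(-1)\ts F)$ and $\HH^3(X,F(-1))$. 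Now $\HH^3(X,F(-1))\cong\HH^0(X,F(-1))^*=0$ by stability, while Serre duality identifies $\HH^2(X,\UU^*(-1)\ts F)$ with $\HH^1(X,\UU\ts F(-1))^*$. Feeding the universal sequence \eqref{eq:universal-9}, tensored by $F(-1)$, into cohomology shows that $\HH^1(X,\UU\ts F(-1))$ is a quotient of $\Hom_X(\UU,F(-1))$ as soon as $\HH^1(X,F(-1))=0$; and $\Hom_X(\UU,F(-1))=0$ because $\UU$ and $F(-1)$ are stable with $\mu(\UU)=-\tfrac13>-\tfrac12=\mu(F(-1))$. Hence $\HH^2(X,\UU^*(-1)\ts F)=0$ and the desired $\Ext^2_X(\EE_y,F)=0$ follows. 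The delicate point is to arrange the twist so that the torsion term lands on a curve (killing its $\HH^2$) and the remaining obstruction is precisely \eqref{eq:ipotesona}; one must also check that tensoring by $F$ preserves exactness, which is immediate when $F$ is locally free and needs a short $\TTor$ argument otherwise.

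It remains to read off the numerical invariants. Riemann--Roch on $X$ gives $\chi(\EE_y,F)=6-c_2$: indeed the Chern characters of $\EE_y$ and $F$ differ only through $c_2$, and $\chi(\EE_y,\EE_z)=0$ for members of the universal family, so $\ext^1_X(\EE_y,F)=-\chi(\EE_y,F)=c_2-6$ is constant and $\phx(F)$ is a vector bundle of rank $c_2-6$. Finally the degree follows by Grothendieck--Riemann--Roch for $\phx$, pushing $\mathrm{ch}(p^*F)\,\mathrm{ch}(\EE^*)\,q^*\mathrm{ch}(\omega_\Gamma)\,p^*\mathrm{td}(X)$ forward along $q$ and extracting $\mathrm{ch}_1$; equivalently, one computes $\chi(\Gamma,\phx(F))=\chi_X(\ph(\OO_\Gamma),F)$ by adjunction and uses Riemann--Roch on the genus-$3$ curve $\Gamma$. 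Either route, using $c_1(\EE)=H_X+N$, $c_2(\EE)=6L_X+H_XM+\eta$ and the conventions $\deg(N)=5$, $\deg(M)=3$, $\eta^2=6$, yields $\deg(\phx(F))=c_2-5$.
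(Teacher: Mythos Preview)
Your overall strategy matches the paper's: analyse $\phx(F)$ fibrewise via $\Ext^k_X(\EE_y,F)$, show these groups vanish for $k\neq 1$, and read off rank and degree by Riemann--Roch and Grothendieck--Riemann--Roch. The arguments for $k=0$ and $k=3$ are the same as the paper's.

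For the central vanishing $\Ext^2_X(\EE_y,F)=0$ you take a genuinely different route. The paper dualizes the global-generation sequence $0\to K\to\OO_X^6\to\EE_y\to 0$ of \eqref{eq:E}, applies $\Hom_X(F,-)$ to $0\to\EE_y^*\to\OO_X^6\to K^*\to 0$, and kills the two neighbouring terms by \eqref{eq:HF} (giving $\Ext^1_X(F,\OO_X)=0$) and by stability of $K^*$ (giving $\Hom_X(F,K^*)=0$). You instead pass through the sequence of Lemma~\ref{lunga}, reducing to $\HH^2(X,\UU^*(-1)\ts F)=0$, which via the universal sequence \eqref{eq:universal-9} comes down to $\Hom_X(\UU,F(-1))=0$ together with the hypothesis $\HH^1(X,F(-1))=0$. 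Both arguments are valid; the paper's is shorter and avoids the $\TTor$ bookkeeping, while yours makes the role of \eqref{eq:ipotesona} explicit and in effect proves $\Ext^1_X(F,\UU)=0$ along the way (exactly what the paper establishes separately in Lemma~\ref{lem:resolution}).

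One caveat: two of your Serre-duality identifications, $\HH^3(X,F(-1))\cong\HH^0(X,F(-1))^*$ and $\HH^2(X,\UU^*(-1)\ts F)\cong\HH^1(X,\UU\ts F(-1))^*$, silently use $F^*\cong F(-1)$, hence assume $F$ locally free. For arbitrary $F$ you should write these as $\HH^3(X,F(-1))\cong\Hom_X(F,\OO_X)^*$ and $\HH^2(X,\UU^*(-1)\ts F)\cong\Ext^1_X(F,\UU)^*$, and then apply $\Hom_X(F,-)$ (rather than $-\ts F(-1)$) to \eqref{eq:universal-9}; the same stability and vanishing inputs give $\Ext^1_X(F,\UU)=0$ directly. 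Your acknowledged $\TTor$ issue is harmless: the only non-locally-free term in the Lemma~\ref{lunga} sequence is $\OO_C$, so all Tor terms are supported on the conic $C$ and contribute nothing to $\HH^2$ or $\HH^3$.
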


\begin{proof}
  Using stability of $F$ and Riemann-Roch's formula we get:
  \begin{equation}
    \label{eq:HF}
    \HH^{k}(X,F(-1))=0, \qquad \mbox{for all $k$}.
  \end{equation}

  By the definition \eqref{phi!} of $\phx$, the stalk of
  $\cH^{k}(\phx(F))$ over the point $y\in \Gamma$ is given by:
  \begin{equation}
    \label{eq:FFy}
    \HH^{k+1}(X,\EE^{*}_{y} \ts F)\ts \omega_{\Gamma,y}.
  \end{equation}

  Let us check that \eqref{eq:FFy} vanishes for all $y\in \Gamma$ and
  for $k \neq 0$.
  For $k=-1$, the statement is clear.
  Indeed, by stability, any
  nonzero morphism $\EE_{y} \to F$ would be an isomorphism for
  $\EE_{y}$ is locally free. But $c_{2}(\EE_{y}) \neq c_{2}(F)$.

  To check the case $k=1$, by Serre duality we can show
  $\Ext^{1}_{X}(F,\EE_{y}) = 0$.
  Setting $E = \EE_{y}$ in \eqref{eq:E}, and applying $\Hom_{X}(F,-)$,
  in view of \eqref{eq:HF} we get:
  \[
  \Ext_{k}^{1}(F,\EE^{*}_{y}) = \Hom_{X}(F,K^{*}) = 0,
  \]
  where the last equality holds by stability.
  Finally, \eqref{eq:FFy} holds for $k=2$ again by stability.

  We have thus proved that $\phx(F)$ is a vector bundle on $\Gamma$.
  By Riemann-Roch we compute its rank as $\rk(\phx(F)) = \chi(F\ts \EE_{y})
  = c_{2} - 6$.
  Using Grothendieck-Riemann-Roch's formula, one can easily compute
  the degree of $\phx(F)$.
\end{proof}

\begin{lem} \label{lem:resolution}
  Let $d\geq 7$, and let $F$ be a sheaf in $\Mo_X(2,1,c_{2})$,
  satisfying \eqref{eq:ipotesona}.
  Then we have a functorial resolution of the form:
  \begin{equation}
    \label{eq:resolution}
    0 \to A_{F} \ts \UU^{*} \xr{\zeta_{F}} \ph(\phx(F)) \to F \to 0,
  \end{equation}
  where $A_{F} = \Ext^{2}_{X}(F,\UU)^{*}$ has dimension $c_{2}-6$.
\end{lem}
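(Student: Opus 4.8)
The plan is to feed $F$ into the functorial triangle \eqref{triangolo} and identify each piece using the hypotheses on $F$ together with the cohomological facts already established. Recall that \eqref{triangolo} reads
\[
\ph(\phx(F)) \to F \to \mathbf{\Psi}(\mathbf{\Psi^*}(F)),
\]
and that the $k$-th term of the last object is computed by \eqref{quadrato} as
\[
\Ext_X^{-k}(F,\OO_X)^* \ts \OO_X \oplus \Ext_X^{1-k}(F,\UU_+)^* \ts \UU_+^*.
\]
So the whole lemma reduces to showing that $\mathbf{\Psi}(\mathbf{\Psi^*}(F))$ collapses to a single term, namely $A_F \ts \UU^*$ placed in the degree that turns the triangle into the short exact sequence \eqref{eq:resolution}.

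First I would compute the groups $\Ext^k_X(F,\OO_X)$. By Serre duality these are dual to $\HH^{3-k}(X,F \ts \omega_X) = \HH^{3-k}(X,F(-1))$, and the hypothesis \eqref{eq:ipotesona}, strengthened via stability and Riemann-Roch to the full vanishing $\HH^k(X,F(-1))=0$ for all $k$ (exactly as in \eqref{eq:HF} of Lemma \ref{fascio}), kills every one of them. Hence the $\OO_X$-summand in \eqref{quadrato} disappears entirely. Next I would treat the $\UU^*$-summand, which requires computing $\Ext^k_X(F,\UU)$ for all $k$. The group $\Ext^2_X(F,\UU)$ is precisely what defines $A_F$, so the task is to show that $\Ext^k_X(F,\UU)$ vanishes for $k \neq 2$ and to confirm $\dim A_F = c_2 - 6$. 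For $k=3$ this is Serre duality plus stability of $F$ and $\UU$; for $k=0$ it follows from slope considerations since $\mu(\UU) = -1/3 < \mu(F) = 1/2$ and both are stable; the vanishing of $\Ext^1_X(F,\UU)$ is the delicate point. Once all non-middle terms vanish, the dimension count $\dim A_F = \ext^2_X(F,\UU) = -\chi(F,\UU)$ is a routine Hirzebruch-Riemann-Roch computation using $c_2(F)=c_2$ and the Chern classes of $\UU$, which should return $c_2 - 6$.

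With these vanishings in hand, \eqref{quadrato} shows that $\mathbf{\Psi}(\mathbf{\Psi^*}(F))$ is concentrated in the single cohomological degree coming from $\Ext^2_X(F,\UU)^* \ts \UU^*$, i.e.\ it is $A_F \ts \UU^*$ sitting in the appropriate spot. Rotating the triangle \eqref{triangolo} then yields the short exact sequence
\[
0 \to A_F \ts \UU^* \xr{\zeta_F} \ph(\phx(F)) \to F \to 0,
\]
which is \eqref{eq:resolution}; that $\ph(\phx(F))$ is genuinely a sheaf (concentrated in degree $0$) follows because it is the extension of the two sheaves $F$ and $A_F \ts \UU^*$. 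Functoriality is inherited directly from the functoriality of \eqref{triangolo} and \eqref{quadrato}.

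The main obstacle I anticipate is the vanishing of $\Ext^1_X(F,\UU)$, which does not come for free from stability alone since the relevant slopes are close. I expect to handle it by the same device used in Lemma \ref{breve}: realize $F$ through a presentation such as the globally-generated sequence \eqref{eq:E} (or, in the recursive setup, via the defining extension \eqref{addline} relating $\Mo(c_2)$ to $\Mo(c_2-1)$), apply $\Hom_X(-,\UU)$, and exploit that $\UU$ is an exceptional ACM bundle (Lemma \ref{stabU}) so that the cohomology of the relevant twists of $\UU$ is under control. Dévissage along \eqref{addline}, reducing to the already-understood base case $c_2=6$ where $F = \EE_y$ and the needed $\Ext$-vanishings were recorded in the proof of the $\eta^2=6$ lemma, is the most likely route if the direct argument stalls.
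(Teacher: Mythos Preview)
Your overall strategy is exactly the paper's: plug $F$ into the triangle \eqref{triangolo}, use \eqref{quadrato}, and compute $\Ext^k_X(F,\OO_X)$ and $\Ext^k_X(F,\UU)$; the $\OO_X$-part vanishes by \eqref{eq:HF}, the cases $k=0,3$ for $\UU$ fall to stability, and Riemann--Roch gives $\dim A_F = c_2-6$.

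The one place where you diverge from the paper is the vanishing of $\Ext^1_X(F,\UU)$, and your proposed routes are shakier than you indicate. The presentation \eqref{eq:E} is specific to globally generated bundles in $\Mo_X(2,1,6)$ and is not available for a general $F$ in $\Mo_X(2,1,c_2)$ with $c_2\geq 7$ (indeed Lemma \ref{lemmone} shows such $F$ need not be globally generated). The d\'evissage along \eqref{addline} only applies to sheaves lying in the recursively defined component $\Mo(c_2)$, whereas the lemma is stated for \emph{any} $F$ in $\Mo_X(2,1,c_2)$ satisfying \eqref{eq:ipotesona}. The paper's argument is much simpler and works uniformly: apply $\Hom_X(F,-)$ to the universal sequence \eqref{eq:universal-9}; since $\Ext^k_X(F,\OO_X)=0$ for all $k$, one reads off $\Ext^1_X(F,\UU)\cong \Hom_X(F,\UU^*)$, and the latter vanishes by stability because $\mu(F)=1/2 > \mu(\UU^*)=1/3$.
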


\begin{proof}
  To write down \eqref{eq:resolution}, we use the exact triangle \eqref{triangolo}.
  We must calculate the groups
  $\Ext_X^k(F,\OO_X)$ and $\Ext_X^k(F,\UU)$ for all $k$.
  We have proved that the former vanishes for all $k$, see \eqref{eq:HF}.

  If $k=0,3$, we easily get $\Ext_X^k(F,\UU_+)= 0$ by
  stability of the sheaves $\UU_+$ and $\f$.
  Applying the functor $\Hom_X(F,-)$ to
  \eqref{eq:universal-9} we get $\Ext_X^1(F,\UU) \cong \Hom_X(F,\UU^*)=0$,
  where the vanishing follows from the stability of $F$ and $\UU$.
  By Riemann-Roch we get $\ext_X^2(F,\UU)=c_{2}-6$.
\end{proof}

\begin{lem} \label{iniezione}
  Let $c_2\geq 8$, and let $F$ be a sheaf in $\Mo_X(2,1,c_{2})$,
  satisfying \eqref{eq:ipotesona}.
  Then we a natural isomorphism:
  \begin{align}
    \label{eq:nat1} & A_{F} \cong \Hom_{X}(\UU^{*},\ph(\phx(F))), \\
    \label{eq:nat2} & \Ext_{X}^{1}(\UU^{*},F) \cong \Ext_{X}^{1}(\UU^{*},\ph(\phx(F))).
  \end{align}

  In particular, the natural map $\zeta_{F}$ in \eqref{eq:resolution} is uniquely determined up to
  a nonzero scalar.
\end{lem}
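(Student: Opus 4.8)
The plan is to apply the functor $\Hom_X(\UU^*,-)$ to the functorial resolution \eqref{eq:resolution} and to read off both isomorphisms from the resulting long exact sequence. Since $\UU$, hence $\UU^*$, is exceptional, we have $\Hom_X(\UU^*,A_F\ts\UU^*)=A_F$ and $\Ext^k_X(\UU^*,A_F\ts\UU^*)=A_F\ts\Ext^k_X(\UU^*,\UU^*)=0$ for every $k\geq 1$. Thus the long exact sequence attached to \eqref{eq:resolution} breaks off the fragment
\[
0 \to A_F \to \Hom_X(\UU^*,\ph(\phx(F))) \to \Hom_X(\UU^*,F) \to 0,
\]
together with isomorphisms $\Ext^1_X(\UU^*,\ph(\phx(F)))\cong\Ext^1_X(\UU^*,F)$ (and matching terms in higher degree). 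The latter is exactly \eqref{eq:nat2}, obtained with no further work. The displayed fragment shows that \eqref{eq:nat1} holds if and only if $\Hom_X(\UU^*,F)=0$, so the whole lemma reduces to this vanishing.

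To prove $\Hom_X(\UU^*,F)=0$ I would argue as in Lemma \ref{lunga} and Proposition \ref{coniche}. Suppose $\alpha:\UU^*\to F$ is nonzero, and set $I=\im(\alpha)$, $K=\ker(\alpha)$. Stability of $\UU^*$ and of $F$ (which is $\mu$-stable, being Gieseker-stable of rank $2$ with odd $c_1$) forces $I$ to have rank $2$: a rank-$1$ image would have to satisfy both $\mu(I)>\mu(\UU^*)=1/3$ and $\mu(I)<\mu(F)=1/2$, impossible for an integral degree. Hence $K$ is a reflexive rank-$1$ sheaf with $c_1(K)=0$, so $K\cong\OO_X$ and $c(I)=c(\UU^*)$; in particular $c_2(I)=8$ and $c_3(I)=2$. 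The quotient $T=F/I$ is then torsion with $c_1(T)=0$, $c_2(T)=c_2-8$, and $c(F)=c(I)\,c(T)$. Because $c_1(T)=0$ and $\Pic(X)=\langle H_X\rangle$, the sheaf $T$ has no divisorial support, so $\codim\supp T\geq 2$ and $c_2(T)\leq 0$. For $c_2\geq 9$ this contradicts $c_2(T)=c_2-8\geq 1$. For $c_2=8$ we get $c_2(T)=0$, so $T$ is supported in dimension $0$ of some length $\ell\geq 0$; then $c_3(T)=2\ell$ and $c_3(F)=c_3(I)+c_3(T)=2+2\ell>0$, contradicting $c_3(F)=0$. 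In either case $\alpha=0$, so $\Hom_X(\UU^*,F)=0$.

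Combining the two steps yields \eqref{eq:nat1} and \eqref{eq:nat2}. For the final assertion, under the identification \eqref{eq:nat1} the natural map $\zeta_F$ of \eqref{eq:resolution} coincides, up to a nonzero scalar, with the canonical evaluation map $e_{\UU^*,\ph(\phx(F))}$, exactly as $\zeta_L$ is identified with $e_{\UU^*,\ph(\cL)}$ in Proposition \ref{brillrette}; since this evaluation map involves no choices, $\zeta_F$ is determined up to a nonzero scalar. The main obstacle is the vanishing $\Hom_X(\UU^*,F)=0$: this is precisely where the hypothesis $c_2\geq 8$ is indispensable, in contrast to $c_2=6,7$, where $\Hom_X(\UU^*,F)$ is nonzero (compare Lemma \ref{breve}), and it is the only place the numerical bound enters.
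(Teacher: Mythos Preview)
Your proof is correct and follows essentially the same route as the paper: apply $\Hom_X(\UU^*,-)$ to \eqref{eq:resolution}, use exceptionality of $\UU^*$ to reduce both isomorphisms to the vanishing $\Hom_X(\UU^*,F)=0$, and then rule out a nonzero $\alpha:\UU^*\to F$ by the Chern-class count (kernel $\cong\OO_X$, so $c_2(\im\alpha)=8$ and $c_3(\im\alpha)=2$, forcing an impossible torsion cokernel for $c_2\geq 8$). Your write-up is in fact slightly more explicit than the paper's in justifying $\rk(\ker\alpha)=1$ and in handling the $c_2=8$ case via $c_3(T)=2\ell\geq 0$; the paper phrases the latter as $c_3(\cok\gamma)=-2$ being impossible, which is the same computation.
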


\begin{proof}
  In view of Lemma \ref{lem:resolution}, we have the resolution \eqref{eq:resolution}.
  We apply to it the functor $\Hom_{X}(\UU^{*},-)$, and we recall that
  $\UU^{*}$ is exceptional.
  In fact, we are going to show:
  \begin{equation}
    \label{eq:zeri}
    \Ext^{k}_{X}(\UU^{*},F)=0, \qquad \mbox{for $k=0,2,3$},
  \end{equation}
  where the case $k=0$ proves the lemma.
  By contradiction, we consider a nonzero map
  $\gamma:\UU^* \to F$.
  By the argument of Lemma \ref{lunga} we have $\ker (\gamma) \cong
  \OO_{X}$, so $c_{2}(\im(\gamma))=8$, which is impossible for
  $c_{2}(F)\geq 9$. For $c_{2}(F) = 8$, note that
  $c_{3}(\im(\gamma))=2$ gives $c_{2}(\cok(\gamma))=0$,
  $c_{3}(\cok(\gamma))=-2$ which is also impossible.
\end{proof}

Note that it is now immediate to show \eqref{eq:zeri} also for
$k=2,3$. Indeed, for $k\geq 2$, we have:
\[
\Ext^{k}_{X}(\UU^{*},F) \cong \Ext^{k}_{X}(\UU^{*},\ph(\phx(F))) \cong
\Ext^{k}_{\Gamma}(\phs(\UU^{*}),\phx(F)) = 0,
\]
since $\phs(\UU^{*})$ and $\phx(F)$ are
sheaves on a curve.

\begin{lem} \label{lem:petri}
  Let $F$ be a sheaf in $\Mo_X(2,1,c_{2})$ satisfying
  \eqref{eq:ipotesona}, and set $\cF = \phx(F)$.
  Then $\cF$ satisfies $\hh^{0}(\Gamma,\cV \ts \cF) = c_{2}-6$.
  Further, if $F$ satisfies \eqref{eq:liscio},
  then the natural map:
  \begin{equation}
    \label{eq:dualpetri}
    \HH^{0}(\Gamma,\cV \ts \cF) \ts
    \HH^{0}(\Gamma,\cV^{*} \ts \cF \ts \omega_{\Gamma}) \to
    \HH^{0}(\Gamma,\cF^{*} \ts \cF \ts \omega_{\Gamma})
  \end{equation}
  is injective.
\end{lem}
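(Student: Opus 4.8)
The plan is to prove the two assertions by moving the cohomology on $\Gamma$ back to $X$ through the adjunctions among $\ph,\phx,\phs$, and then feeding in the vanishing results already available for $F$ and for the exceptional bundle $\UU^{*}$. For the first assertion I would start from $\cV^{*}\cong\phs(\UU^{*})$, see \eqref{eq:V*}, to write $\HH^{0}(\Gamma,\cV\ts\cF)=\Hom_{\Gamma}(\cV^{*},\cF)=\Hom_{\Gamma}(\phs(\UU^{*}),\phx(F))$. Since $\phs$ is left adjoint to $\ph$, this equals $\Hom_{X}(\UU^{*},\ph(\phx(F)))$, which by \eqref{eq:nat1} is $A_{F}$, of dimension $c_{2}-6$; hence $\hh^{0}(\Gamma,\cV\ts\cF)=c_{2}-6$. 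Running the same adjunction in degree one and using \eqref{eq:nat2}, I also get $\Ext^{1}_{X}(\UU^{*},F)\cong\Ext^{1}_{\Gamma}(\cV^{*},\cF)\cong\HH^{1}(\Gamma,\cV\ts\cF)$, an identification I will reuse below.

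For the second assertion the engine is the functorial resolution \eqref{eq:resolution}. I would apply $\Hom_{X}(-,F)$ to $0\to A_{F}\ts\UU^{*}\xr{\zeta_{F}}\ph(\phx(F))\to F\to 0$ and simplify the resulting long exact sequence. Because $\UU^{*}$ is exceptional, $\Ext^{k}_{X}(A_{F}\ts\UU^{*},F)=A_{F}^{*}\ts\Ext^{k}_{X}(\UU^{*},F)$, which by \eqref{eq:zeri} vanishes for $k=0,2,3$; and $\Ext^{k}_{X}(\ph(\phx(F)),F)\cong\Ext^{k}_{\Gamma}(\cF,\cF)$ by adjunction, vanishing for $k\geq 2$ since $\Gamma$ is a curve. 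What remains is the four-term exact sequence
\[
0\to\Ext^{1}_{X}(F,F)\to\Ext^{1}_{\Gamma}(\cF,\cF)\xr{\delta}A_{F}^{*}\ts\Ext^{1}_{X}(\UU^{*},F)\to\Ext^{2}_{X}(F,F)\to 0,
\]
where the first map is injective because $\Hom_{X}(A_{F}\ts\UU^{*},F)=0$.

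Feeding in the identifications of the first paragraph, the middle term becomes $\Hom(\HH^{0}(\Gamma,\cV\ts\cF),\HH^{1}(\Gamma,\cV\ts\cF))$, and $\delta$ is the map sending a first-order deformation to cup product with it. The exactness then reads $\cok(\delta)\cong\Ext^{2}_{X}(F,F)$ (and incidentally $\ker(\delta)\cong\Ext^{1}_{X}(F,F)$, which recovers the expected dimension $2c_{2}-11$). In particular the hypothesis \eqref{eq:liscio}, $\Ext^{2}_{X}(F,F)=0$, is precisely the surjectivity of $\delta$.

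To conclude I would dualize. By Serre duality on $\Gamma$, $\Ext^{1}_{\Gamma}(\cF,\cF)^{*}\cong\HH^{0}(\Gamma,\cF^{*}\ts\cF\ts\omega_{\Gamma})$ and $\HH^{1}(\Gamma,\cV\ts\cF)^{*}\cong\HH^{0}(\Gamma,\cV^{*}\ts\cF^{*}\ts\omega_{\Gamma})$, so the transpose of $\delta$ is exactly the natural multiplication map \eqref{eq:dualpetri}. Since a linear map is injective if and only if its transpose is surjective, the surjectivity of $\delta$ forced by \eqref{eq:liscio} is equivalent to the injectivity claimed in \eqref{eq:dualpetri}. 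The one genuinely non-formal point, and the main obstacle, is the naturality statement of the third paragraph: one must verify that, under the adjunction isomorphisms, the connecting homomorphism $\delta$ of the $\Hom_{X}(-,F)$-sequence really is the cup-product Petri map, so that its Serre transpose is the specific multiplication map in \eqref{eq:dualpetri}. Everything else is formal, following from adjunction, the exceptional property of $\UU^{*}$, and the vanishing \eqref{eq:zeri}.
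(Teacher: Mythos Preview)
Your proposal is correct and follows the same route as the paper: apply $\Hom_{X}(-,F)$ to the resolution \eqref{eq:resolution}, use the adjunctions and the vanishing \eqref{eq:zeri} to get the four-term sequence identifying $\Ext^{2}_{X}(F,F)$ with the cokernel of $\delta$, then Serre-dualize. The one point you flag as unresolved---the identification of $\delta$ with the Petri-type map---is precisely what the paper pins down, and its argument is short: since $\phs(\UU^{*})\cong\cV^{*}$ and, by Lemma~\ref{iniezione}, $\zeta_{F}$ is the unique (up to scalar) map $A_{F}\ts\UU^{*}\to\ph(\cF)$, applying $\phs$ to $\zeta_{F}$ must give (up to scalar) the evaluation map $e_{\cV^{*},\cF}:A_{F}\ts\cV^{*}\to\cF$; the commutative square this produces identifies $\delta$ with $\Ext^{1}_{\Gamma}(e_{\cV^{*},\cF},\cF)$, whose transpose is exactly \eqref{eq:dualpetri}.
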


\begin{proof}
  Recall the notation $A_{F}=\Ext^{2}_{X}(F,\UU)^{*}$.
  Note that, by \eqref{eq:nat1},
  \eqref{eq:nat2} and
  \eqref{eq:V*} we have natural isomorphisms:
  \begin{align*}
  & A_{F} \cong \Hom_{\Gamma}(\cV^{*},\cF), \\
  & \Ext_{X}^{1}(\UU^{*},F) \cong \Ext^{1}_{\Gamma}(\cV^{*},\cF),
  \end{align*}
  and we have seen that $A_{F}$ has dimension $c_{2}-6$.

  We have thus proved the first claim,
  and the map $\phs(\zeta_{F})$ must agree up to a nonzero scalar
  with the natural evaluation map:
  \begin{equation}\nonumber
   e:= e_{\cV^*,\cF}:\Hom_{\Gamma}(\cV^{*},\cF) \ts \cV^{*} \to \cF.
  \end{equation}

  We have thus a commutative exact diagram:
    \begin{equation}
    \label{eq:diagramma-petri-2}
    \xymatrix@C+10ex{
      \Ext^{1}_{X}(\ph(\cF),F) \ar^-{\Ext_{X}^{1}(\zeta_{F},F)}[r] \ar[d]^{\cong} &
      A_{F}^{*} \ts \Ext^{1}_{X}(\UU^{*},F) \ar[d]^{\cong} \\
      \Ext^{1}_{\Gamma}(\cF,\cF) \ar^-{\Ext_{\Gamma}^{1}(e,\cF)}[r] &
      A_{F}^{*} \ts \Ext^{1}_{\Gamma}(\cV^{*},\cF).
    }
  \end{equation}

  Therefore, we have the natural isomorphisms:
  \begin{align*}
    & \Ext^{1}_{X}(F,F) \cong \ker (\Ext_{X}^{1}(\zeta_{F},F)) \cong
    \ker (\Ext_{\Gamma}^{1}(e,\cF)), \\
    & \Ext^{2}_{X}(F,F) \cong \cok (\Ext_{X}^{1}(\zeta_{F},F)) \cong
    \cok (\Ext_{\Gamma}^{1}(e,\cF)).
  \end{align*}

  Thus the map $\Ext^1_{\Gamma}(e,\cF)$ is surjective as soon as $F$
  satisfies \eqref{eq:liscio}.
  This implies our claim, since the map \eqref{eq:dualpetri} is the
  transpose of $\Ext^{1}_{\Gamma}(e,\cF)$.
\end{proof}

We are now in position to prove the main result of this section.

\begin{proof}[Proof of Theorem \ref{Main}]
Recall that the variety $\Mo(c_{2})$ contains a vector bundle $F$
satisfying \eqref{eq:ipotesona}, hence by semicontinuity Lemma
\ref{fascio} applies to an open dense subset of $\Mo(c_{2})$.
Thus, for any sheaf $F$ in this open set, $\cF=\phx(F)$ is a vector
bundle on $\Gamma$ of rank $c_{2}-6$ and degree $c_{2}-5$, and it satisfies
$\hh^{0}(\Gamma,\cV \ts \cF)=c_{2}-6$ by by Lemma \ref{lem:petri}.

Let us now prove that, if $F$ is general in $\Mo(c_{2})$, then the vector bundle
$\phx(F)$ is stable over $\Gamma$.
In fact we prove that, if $F$ is a sheaf fitting into
\eqref{addline}, and $G$ is general in $\Mo(c_{2}-1)$, then $\cF = \phx(F)$
is stable over $\Gamma$.
Since stability is an open property by \cite{maruyama:openness}, this will imply that
$\phx(F)$ is stable for $F$ general in $\Mo(c_{2})$.
By induction, we may assume that $\phx(G)$ is a stable vector bundle,
of rank $c_{2}-7$ and degree $c_{2}-6$.

Applying $\phx$ to \eqref{addline}, we get an exact sequence of
bundles on $\Gamma$:
\begin{equation}
  \label{eq:phxaddline}
  0 \to \phx(\OO_{L})[-1] \to \cF \to \phx(G) \to 0,
\end{equation}
where $\phx(\OO_{L})[-1]$ is a line bundle of degree $1$ by Lemma \ref{lem:OL}.
Note that this extension must be nonsplit, for it corresponds to a
nontrivial element in:
\[
\Ext^{1}_{\Gamma}(\phx(G),\phx(\OO_{L})[-1]) \cong \Hom_{X}(G,\OO_{L}).
\]

Assume by contradiction that $\cF$ is not stable, so it contains
a subsheaf $\cK$ with $\mu(\cK) \geq \mu(\cF)$ and $\rk(\cK) <
\rk(\cF)$.
The sequence \eqref{eq:phxaddline}
induces an exact sequence:
\[
0 \to \cK' \to \cK \to \cK'' \to 0,
\]
with $\cK'' \subset \phx(G)$ and $\cK' \subset \phx(\OO_{L}[-1])$.
If $\cK'=0$, then $\mu(\cK) = \mu(\cK'')<\mu(\phx(G))$ for $\phx(G)$ is stable
and \eqref{eq:phxaddline} is nonsplit.
But since $\mu(\phx(G))-\mu(\cF)=\frac{1}{(c_2-5)(c_2-6)}$, we have that
$\mu(\cK)$ cannot fit in the interval $[\mu(\cF),\mu(\phx(G))[$.
If $\rk(\cK')=1$, one can easily apply a similar argument.

We have thus proved that an open dense subset of $\Mo(c_{2})$ maps
into the locus defined by \eqref{eq:brillc2}.
This locus is equipped with
a natural structure of a subvariety of the moduli space
$\Mo_{\Gamma}(c_{2}-6,c_{2}-5)$.
Its tangent space at the point $[\cF]$ is thus
$\ker(\Ext^{1}_{\Gamma}(e,\cF))$, while the obstruction sits in
$\cok(\Ext^{1}_{\Gamma}(e,\cF))$, where again $e=e_{\cV^*,\cF}$. 
Notice that, by Lemma \ref{lem:petri}, the latter space vanishes if $F$
satisfies \eqref{eq:liscio}, so ${\sf B}(c_{2})$ is generically isomorphic
to the $(2d-11)$-dimensional variety $\Mo(c_{2})$.
\end{proof}

\section{The moduli space $\Mo_X(2,1,7)$ as a blowing up of the Picard
variety} \label{sec:7}

In this section, we set up a more detailed study of the moduli space
$\Mo_X(2,1,7)$, of which 
we give a biregular (rather than birational) description. 
In fact, the map $\varphi$ sends the whole space $\Mo_X(2,1,7)$ to
the abelian variety $\Pic^{2}(\Gamma)$.
In turn, $\Pic^{2}(\Gamma)$
contains a copy of the Hilbert scheme
$\sH^{0}_{1}(X)$, via the map $\psi$ (see Proposition \ref{brillrette}), as a subvariety of
codimension $2$.
The relation between these varieties is given by
the main result of this section, which provides Part \ref{B} of our
main theorem.

\begin{thm} \label{main}
  The mapping $\varphi : F \mapsto \phx(F)$ gives an isomorphism of the moduli
  space $\Mo_X(2,1,7)$ to the blowing up of $\Pic^{2}(\Gamma)$ along the
  subvariety $W = \psi(\sH^{0}_{1}(X))$.
  The exceptional divisor consists of the sheaves in $\Mo_X(2,1,7)$
  which are not globally generated.
\end{thm}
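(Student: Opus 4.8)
The plan is to exhibit $\varphi$ as a birational \emph{morphism} $\Mo_X(2,1,7)\to\Pic^2(\Gamma)$ and then to recognise it as the blow-down of $\Pic^2(\Gamma)$ along $W$, by verifying the hypotheses of the standard criterion characterising blow-ups along smooth codimension-$2$ centres. First I would check that $F\mapsto\phx(F)$ is everywhere defined with values in $\Pic^2(\Gamma)$. For $c_2=7$ the transform $\phx(F)$ has rank $1$ and degree $2$, and Lemma \ref{fascio} already shows it is a line bundle whenever $F$ satisfies \eqref{eq:ipotesona}; since a Riemann--Roch computation gives $\chi(F(-1))=0$ and stability forces $\hh^0(X,F(-1))=\hh^3(X,F(-1))=0$, condition \eqref{eq:ipotesona} is equivalent to $\HH^2(X,F(-1))=0$. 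For the sheaves violating it I would use the elementary modification \eqref{addline}, namely $0\to F\to\EE_y\to\OO_L\to 0$: applying $\phx$ and using $\phx(\EE_y)\cong\OO_y$ together with Lemma \ref{lem:OL} (so that $\phx(\OO_L)\cong M[1]$ with $\deg M=1$), the long exact cohomology sequence collapses to $0\to M\to\cH^0(\phx(F))\to\OO_y\to 0$ with all other cohomology sheaves zero, so $\phx(F)$ is again a genuine degree-$2$ line bundle (the extension being non-split by stability of $F$). A relative Fourier--Mukai transform over a quasi-universal family then promotes $\varphi$ to a morphism to $\Pic^2(\Gamma)$.

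Next I would compute the fibres. For $c_2=7$ the resolution \eqref{eq:resolution} becomes $0\to\UU^*\xrightarrow{\zeta}\ph(\cL)\to F\to 0$ with $\cL=\phx(F)$ and $\dim A_F=1$; conversely, since $\phx(\UU^*)=0$ and $\phx(\ph(\cL))\cong\cL$, applying $\phx$ to such a sequence returns $\phx(\cok\zeta)\cong\cL$. Hence $\varphi^{-1}([\cL])$ is naturally $\p(\Hom_X(\UU^*,\ph(\cL)))$, and via $\phs(\UU^*)\cong\cV^*$ of Proposition \ref{coniche} this is $\p(\HH^0(\Gamma,\cV\ts\cL))$. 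As $\chi(\cV\ts\cL)=1$ on the genus-$3$ curve $\Gamma$, a general $\cL$ has $\hh^0(\cV\ts\cL)=1$, so $\varphi$ is bijective there, whereas over $W=\{\hh^0(\cV\ts\cL)\ge 2\}=\psi(\sH^0_1(X))$ one has $\hh^0=2$ and the fibre is a $\p^1$. These $\p^1$'s sweep out the non-globally-generated sheaves, so the non-g.g.\ locus is $E:=\varphi^{-1}(W)$. Together with Theorem \ref{Main} for $c_2=7$ (where ${\sf B}(7)=\Pic^2(\Gamma)$, since $\hh^0\ge 1$ is automatic), this shows $\varphi$ is a birational morphism restricting to an isomorphism over $\Pic^2(\Gamma)\setminus W$.

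I would then identify $E$ with a projectivised normal bundle. The relative evaluation $e_{\cV^*,\cL}$ over $\Pic^2(\Gamma)$ presents $W$ as a determinantal locus, smooth of codimension $2$ because $W\cong\sH^0_1(X)$ is smooth; Remark \ref{rmk:normal} computes the normal bundle $\cN_W\cong A_L^*\ts\Ext^1_\Gamma(\cV^*,\cL)$ to be locally free of rank $2$. Using the deformation-theoretic square \eqref{eq:diagramma-petri} of Proposition \ref{brillrette} and its analogue \eqref{eq:diagramma-petri-2} in Lemma \ref{lem:petri}, I would match the fibre $\p(\HH^0(\Gamma,\cV\ts\cL))\cong\p(A_L)$ with the fibre of $\p(\cN_W)$ over each point of $W$, and check that $\OO_E(-E)$ restricts to the relative $\OO(1)$. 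Finally, $\Mo_X(2,1,7)$ is a smooth threefold: at globally generated sheaves by $\Ext^2_X(F,F)=0$, which follows from Lemma \ref{lem:petri} once \eqref{eq:dualpetri} is injective, and at the non-g.g.\ sheaves by a direct $\Ext$-computation along \eqref{addline}. A birational projective morphism of smooth threefolds that is an isomorphism away from the smooth curve $W$ and whose exceptional divisor $E$ is the $\p^1$-bundle $\p(\cN_W)\to W$ with the correct conormal weight is the blow-down of $\mathrm{Bl}_W\Pic^2(\Gamma)$; this identifies $\Mo_X(2,1,7)$ with the blow-up and $E$ with its exceptional divisor, which is exactly the locus of non-globally-generated sheaves.

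The hard part will be two-fold. First, I must show that for every $[\zeta]\in\p(\HH^0(\Gamma,\cV\ts\cL))$ lying over a point of $W$ the cokernel $\cok\zeta$ is an honest stable torsion-free sheaf with the prescribed Chern classes, hence a point of $\Mo_X(2,1,7)$: the injectivity of $\zeta$ as a map of sheaves, the torsion-freeness of its cokernel, and the non-isomorphism of the sheaves attached to distinct $[\zeta]$ are not formal and must be controlled uniformly along $W$, so that each $\p^1$ is genuinely filled by moduli points. Second, the infinitesimal matching of $E$ with $\p(\cN_W)$, namely that $\varphi|_E$ realises the projective-bundle structure with $\OO_E(-E)\cong\OO_{\p(\cN_W)}(1)$, is what actually feeds the blow-up criterion; the global-generation dichotomy and the smoothness of $\Mo_X(2,1,7)$ at non-g.g.\ points are the technical lynchpins linking these two parts.
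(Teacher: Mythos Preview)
Your overall architecture matches the paper's: show $\varphi$ is everywhere defined with values in $\Pic^2(\Gamma)$, prove it is an isomorphism over the complement of $W$, and over $W$ identify each fibre with $\p(A_L)\cong\p(\HH^0(\Gamma,\cV\ts\cL))$, which in turn is the projectivised normal bundle by Remark~\ref{rmk:normal}. Two points deserve comment.

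\textbf{The fibre over $W$.} Here your route and the paper's diverge. You propose to parametrise $\varphi^{-1}([\cL])$ by $[\zeta]\in\p(\Hom_X(\UU^*,\ph(\cL)))$ and show that $\cok\zeta$ is a stable torsion-free sheaf for \emph{every} nonzero $\zeta$. The paper instead builds an explicit map $\eta:\p(A_L)\to\Mo_X(2,1,7)$ from the other side, via maps $\gamma:\UU\to\cI_L$ (Lemma~\ref{5} gives $A_L^*\cong\Hom_X(\UU,\cI_L)$). The dichotomy ``$\gamma$ surjective / $\im\gamma=\cI_C$ for a reducible conic $C\supset L$'' produces, respectively, the locally free and the non-locally-free sheaves in the fibre (Lemma~\ref{sugiu}); injectivity of $\eta$ uses only simplicity of $\UU$ and the vanishing $\Ext^1_X(\cI_L,\UU)=0$, $\Ext^1_X(\UU,\EE_y^*)=0$. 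Lemma~\ref{sugiu2} then shows that every $F$ with $\varphi(F)=\psi([L])$ arises from such a $\gamma$ by dualising \eqref{eq:nongg1}--\eqref{eq:nongg2}. This sidesteps the questions you flag as hard (injectivity of $\zeta$ as a sheaf map, torsion-freeness and stability of $\cok\zeta$): rather than proving those uniformly, the paper produces the sheaves directly and checks stability by hand in each of the two cases. Your picture is equivalent---under $\phs$ and \eqref{eq:V*} your $\zeta$'s and the paper's $\gamma$'s live in the same $\p(A_L)$---but the paper's construction is what makes the surjectivity onto the fibre tractable.

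\textbf{Two smaller issues.} First, the case analysis in your opening paragraph is unnecessary: Lemma~\ref{lemmone} proves $\HH^k(X,F(-1))=0$ for $k=1,2$ for \emph{every} $F\in\Mo_X(2,1,7)$, so Lemma~\ref{fascio} already applies across the whole moduli space and no separate treatment via \eqref{addline} is needed. Second, your final step invokes a blow-up criterion for a birational morphism of \emph{smooth} threefolds with \emph{smooth} centre; but $\sH^0_1(X)\cong W$ is smooth only for general $X$, and the paper's statement is for arbitrary smooth $X$ of genus~$9$ (indeed the introduction says $\Mo_X(2,1,7)$ is smooth \emph{as soon as} $\sH^0_1(X)$ is). Your claimed ``direct $\Ext$-computation along \eqref{addline}'' will not give $\Ext^2_X(F,F)=0$ at non-globally-generated points when $W$ is singular. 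The paper avoids this by arguing fibrewise: the identification of the $\p^1$-fibre with the fibre of $\p(\cN_W)$ is made pointwise via the canonical isomorphism $A_L\cong\Hom_\Gamma(\cV^*,\cL)$, without appealing to global smoothness.
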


We will need some lemmas.

\begin{lem} \label{lemmone}
  Let $F$ be a sheaf in $\Mo_X(2,1,7)$.
  Then, we have:
  \begin{equation}
    \label{eq:veniscing}
    \HH^k(X,F(-1))=\HH^k(X,F)=0, \qquad \mbox{for $k=1,2$.}
  \end{equation}

  Moreover, either $F$ is a locally free, or there exists an exact sequence:
  \begin{equation}\label{doubledual}
    0\to F \to E \to \OO_L \to 0,
  \end{equation}
  where $E$ is a bundle in $\Mo_X(2,1,6)$ and $L$ is a line contained in $X$.

  Furthermore, the following statements are equivalent:
  \begin{enumerate}[i)]
  \item \label{nongg1} the sheaf $F$ is not globally generated;
  \item \label{nongg2} the group $\Hom_X(\UU^*,F)$ is nonzero;
  \item \label{nongg3} there exists a line $L\subset X$, a sheaf $I$ in $\Mo_X(2,1,8,2)$ and two exact sequence:
    \begin{align}
      \label{eq:nongg1} & 0 \to \OO_X \to \UU^* \to I \to 0, \\
      \label{eq:nongg2} & 0 \to I \to F \to \OO_L(-1) \to 0.
    \end{align}
  \end{enumerate}
\end{lem}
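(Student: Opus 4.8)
First note that every sheaf in $\Mo_X(2,1,7)$ is stable, since an odd first Chern class rules out a rank-one subsheaf of the same slope, so semistable $=$ stable. The heart of the statement is the vanishing \eqref{eq:veniscing}: it is exactly the hypothesis \eqref{eq:ipotesona} that powers Lemmas \ref{fascio}--\ref{lem:petri}, so I would prove it first. By stability and Serre duality one gets $\HH^{0}(X,F(-1))=\HH^{3}(X,F(-1))=\HH^{3}(X,F)=0$, and for a \emph{bundle} $F$ the identity $F^{*}\cong F(-1)$ yields the self-duality $\HH^{2}(X,F(-1))\cong\HH^{1}(X,F(-1))^{*}$; Riemann--Roch gives $\chi(F(-1))=0$. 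Hence everything reduces to proving $\HH^{1}(X,F(-1))=0$, after which $\HH^{1}(X,F)$ and $\HH^{2}(X,F)$ drop out by comparing $F$ with $F(-1)$ along the sequences produced below (or along a general K3 section).

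\textbf{The reflexive hull: Part (\ref{B}-type dichotomy) and the non locally free case of Part (1).} Set $E=F^{**}$, a stable reflexive sheaf with $c_{1}(E)=1$. If $F$ is reflexive, then $c_{3}(F)=0$ forces $F$ to be locally free (a rank-two reflexive sheaf on a smooth threefold with $c_{3}=0$ is a bundle), which is the first alternative. Otherwise $T=E/F\neq 0$ is supported in codimension $\geq 2$, and by the minimal $c_{2}$ bound \cite[Lemma 3.1]{brambilla-faenzi:genus-7} we have $c_{2}(E)\in\{6,7\}$. The decisive step is to exclude $c_{2}(E)=7$ (that is $\dim T=0$) and to identify $T$. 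I would show $c_{2}(E)=6$, so $E=\EE_{y}\in\Gamma$ by \cite[Proposition 3.5]{brambilla-faenzi:genus-7}; then $T$ is a rank-one sheaf on its degree-one support, which is a line $L$. As $E$ is globally generated with $c_{1}=1$, the restriction splits with no negative summand, $E_{L}\cong\OO_{L}\oplus\OO_{L}(1)$, so the pure part of any rank-one quotient of $E$ on $L$ is $\OO_{L}(a)$ with $a\geq 0$; since $\chi(T)=\chi(E)-\chi(F)=1$, this forces $T\cong\OO_{L}$. Finally, twisting $0\to F\to E\to\OO_{L}\to 0$ by $\OO_{X}(-1)$ and using $\HH^{k}(X,E(-1))=0$ for all $k$ (the ACM property of $\EE_{y}$) and $\HH^{k}(L,\OO_{L}(-1))=0$, we obtain $\HH^{k}(X,F(-1))=0$ for all $k$; the untwisted sequence, with $\HH^{k}(X,E)=0$ for $k\geq 1$ and $\HH^{0}(X,E)\twoheadrightarrow\HH^{0}(L,\OO_{L})$, gives $\HH^{1}(X,F)=\HH^{2}(X,F)=0$.

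\textbf{The locally free case: the main obstacle.} When $E=F$ the reduction to $\Gamma$ is unavailable, and proving $\HH^{1}(X,F(-1))=0$ for \emph{every} stable bundle with $(c_{1},c_{2})=(1,7)$ is where I expect the real work to be. This is in fact the same difficulty as excluding $c_{2}(E)=7$ above: if $\dim T=0$ of length $\ell$, then $\HH^{0}(X,E(-1))=0$ by stability and the twisted sequence yields $\C^{\ell}\hookrightarrow\HH^{1}(X,F(-1))$, so the exclusion is \emph{equivalent} to the vanishing. I would attack it in one of two ways: either restrict to a general smooth K3 section $S\in|H_{X}|$, where $F_{S}$ is stable and one must check it is non-special, so that $\hh^{0}(S,F_{S})=\chi(F_{S})$; or via the Serre construction, a section of $F$ vanishes on a degree-$7$ curve $C$ with $\omega_{C}\cong\omega_{X}\otimes\det F|_{C}\cong\OO_{C}$ (arithmetic genus one), and $\HH^{1}(X,F(-1))\cong\HH^{1}(X,\cI_{C})$ vanishes exactly when $C$ is connected. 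Establishing this non-speciality (equivalently, connectedness) is the crux of the lemma.

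\textbf{Part (3): the equivalences.} Granting \eqref{eq:veniscing}, Lemma \ref{lem:resolution} applies with $A_{F}$ one-dimensional, giving $0\to\UU^{*}\xrightarrow{\zeta_{F}}\ph(\phx(F))\to F\to 0$. For (\ref{nongg2})$\Rightarrow$(\ref{nongg3}) I would take a nonzero $\alpha\colon\UU^{*}\to F$ and argue as in Lemma \ref{lunga}: stability of $\UU^{*}$ and $F$ forces $\rk(\im\alpha)=2$ and $\ker\alpha\cong\OO_{X}$, which is \eqref{eq:nongg1} with $I=\im\alpha\in\Mo_{X}(2,1,8,2)$; then $F/I$ is supported on a line and, being pure by the splitting of $\UU^{*}_{L}$, equals $\OO_{L}(-1)$, giving \eqref{eq:nongg2}. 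Conversely, (\ref{nongg3})$\Rightarrow$(\ref{nongg2}) follows by composing $\UU^{*}\twoheadrightarrow I\hookrightarrow F$, and (\ref{nongg3})$\Rightarrow$(\ref{nongg1}) because $F$ surjects onto $\OO_{L}(-1)$, which is not globally generated, hence neither is $F$. For (\ref{nongg1})$\Rightarrow$(\ref{nongg2}) I would argue by contraposition: if $\Hom_{X}(\UU^{*},F)=0$ then $\zeta_{F}$ is injective and $F$ is a quotient of $\ph(\phx(F))$, so global generation of $F$ follows from that of $\ph(\phx(F))$; the latter holds precisely when $\phx(F)\notin W$, and via Proposition \ref{brillrette} and the resolution \eqref{eq:resL} this complementary condition is identified with $\Hom_{X}(\UU^{*},F)\neq 0$, closing the cycle.
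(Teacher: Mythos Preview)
Your treatment of the first two assertions is a genuine attempt at something the paper does not prove here: it simply cites \cite[Proposition 3.7]{brambilla-faenzi:genus-7} for both the vanishing \eqref{eq:veniscing} and the dichotomy \eqref{doubledual}. Your reduction of the non locally free case to the ACM property of $\EE_{y}$ is clean, and you correctly identify the locally free case (equivalently, the exclusion of $c_{2}(F^{**})=7$) as the hard point; but you only list two possible strategies (K3 restriction, Serre construction and connectedness of the zero locus) without carrying either through. That is an honest gap which you yourself flag.

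The serious problem is in your argument for \eqref{nongg1}$\Rightarrow$\eqref{nongg2}. You argue by contraposition: assuming $\Hom_{X}(\UU^{*},F)=0$, you note that $F$ is a quotient of $\ph(\phx(F))$, and then assert that $\ph(\phx(F))$ is globally generated precisely when $\phx(F)\notin W$. The identification $\Hom_{X}(\UU^{*},F)=0\Leftrightarrow\hh^{0}(\Gamma,\cV\otimes\phx(F))=1$ is correct (apply $\Hom_{X}(\UU^{*},-)$ to \eqref{eq:resolution}), but the claim that $\ph(\cL)$ is globally generated for every $\cL\in\Pic^{2}(\Gamma)\setminus W$ is nowhere established, and it is not obvious: $\ph(\cL)$ is a rank-$5$ bundle on $X$ whose global generation you would have to check directly. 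Without this, your cycle does not close. (Your purity argument for $F/I$ in \eqref{nongg2}$\Rightarrow$\eqref{nongg3} via ``the splitting of $\UU^{*}_{L}$'' is also too quick; the paper instead shows $\HH^{0}(X,T(-1))=0$ using $\HH^{1}(X,I(-1))=0$ from \eqref{eq:nongg1}.)

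The paper's route for \eqref{nongg1}$\Rightarrow$\eqref{nongg3} is entirely different and more elementary in spirit: it studies the evaluation map $e_{\OO_{X},F}$ directly, with kernel $K$ a rank-$3$ reflexive sheaf of $c_{1}=-1$, and proves via a separate Claim that $c_{2}(K)\in\{8,9\}$ and $\HH^{2}(X,K)=0$. The first bound comes from stability of $K_{S}$ on a general K3 section (Hoppe's criterion) together with \eqref{eq:dimension}; the second from two Bogomolov-type arguments, one on $X$ excluding a nontrivial extension $0\to\OO_{X}(-1)\to\tilde{K}\to K(1)\to 0$, and one on $S$ excluding the analogous extension by $\OO_{S}$. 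These force $c_{3}(K)=-2$ and $\ell=1$, whence $T\cong\OO_{L}(-1)$. The existence of \eqref{eq:nongg1} is then obtained from $\chi(\UU^{*},K)=-1$, which produces a nonzero $\alpha\colon\UU^{*}\to I$ via $\Hom_{X}(\UU^{*},I)\cong\Ext^{1}_{X}(\UU^{*},K)$. This argument does not touch $\ph$ or $\Gamma$ at all; your proposed shortcut through global generation of $\ph(\cL)$ would be attractive if it worked, but as it stands the key step is missing.
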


\begin{proof}
  The first two statements are taken from \cite[Proposition 3.7]{brambilla-faenzi:genus-7}.
  Clearly condition \eqref{nongg3} implies both conditions \eqref{nongg2} and \eqref{nongg1}.

    Let us prove $\eqref{nongg2} \Rightarrow \eqref{nongg3}$.
  Consider a nonzero map
  $\gamma:\UU^* \to F$.
  The argument of Lemma \ref{lunga} implies $\ker (\gamma) \cong
  \OO_{X}$ and the cokernel $T$
  of $\gamma$ has $c_1(T)=0$, $c_2(T)=-1$, $c_3(T)=-1$,
  so $T \cong \OO_L(-1)$, for some line
  $L \subset X$, if $T$ is supported on a Cohen-Macaulay curve.
  In turn, this holds if the support of $T$ has no isolated or
  embedded points, which follows once we prove $\HH^0(X,T(-1))=0$.
  But \eqref{eq:nongg1} gives $\HH^1(X,I(-1))=0$,
  so by $\HH^0(X,F(-1))=0$, we have $\HH^0(X,T(-1))=0$, and we are done.

  It remains to show $\eqref{nongg1} \Rightarrow \eqref{nongg3}$.
  \begin{description}
  \item[$\eqref{nongg1} \Rightarrow \eqref{eq:nongg2}$]
  Assume that $F$ is not globally generated, that is the evaluation map
  $e_{\OO_X,F}: \HH^0(X,F) \ts \OO_X \to F$ is not surjective. Set
  $K=\ker(e_{\OO_X,F})$, $I=\im(e_{\OO_X,F})$ and $T=\cok(e_{\OO_X,F})$.
  Now it is enough to prove the following facts:
  \begin{align}
    \label{eq:tosee1}  & c_2(T)=-1, \qquad c_3(T)=-1, \\
    \label{eq:tosee2} & \txt{the support of the sheaf $T$ has no isolated or embedded points.}
  \end{align}
  The stability of $F$ easily implies $\rk(I)=2$ and $c_1(I)=1$.
  Since $T$ is a torsion sheaf with $c_1(T)=0$, we have $c_2(T)=-\ell \leq 0$.
  Looking at the sheaf $K$, we see that it is reflexive of rank $3$
  with:
  \[
  c_1(K)=-1,\quad  c_2(K)=9-\ell,\quad c_3(K)=c_3(T)-2+\ell.
  \]

  Thus, we are now reduced to prove $c_3(K)=-2$ and $\ell=1$.
  By Riemann-Roch, we compute $\chi(K)=\frac{1}{2}c_3(K)+1$.
  By definition of the evaluation map $e_{\OO_X,F}$, taking global sections of the composition:
  \[
  \HH^0(X,F)\otimes \OO_X \epi I \mono F,
  \]
  we obtain an isomorphism. This implies:
  \begin{align}
  &\HH^0(X,K)=\HH^1(X,K)=0, \label{tuttozero}\\
  &\HH^0(X,T)\cong \HH^1(X,I) \cong \HH^2(X,K), \label{giacomo}
  \end{align}
  and one can easily see $\HH^3(X,K)=0$.

  We postpone the proof of the following claim, and we assume it for
  the time being.
  \begin{claim} \label{cappa}
    We have $c_2(K)\in \{8,9 \}$ and $\HH^2(X,K) = 0$.
  \end{claim}

  Note that the second statement of the above claim proves $\chi(K)=0$, which implies $c_3(K)=-2$.
  Then, by the first statement of Claim \ref{cappa}, we obtain $\ell=1$, for
  otherwise $T$ would be zero. This proves \eqref{eq:tosee1}.
  Note that \eqref{eq:tosee2} follows from the vanishing of
  \eqref{giacomo}. This finishes the proof.

  \item[$\eqref{nongg1} \Rightarrow \eqref{eq:nongg1}$]
    Note that $\chi(\UU^*,K)=-1$.
    Since $\Ext^3_X(\UU^*,K)=0$ by stability, we get
    $\Ext^1_X(\UU^*,K) \neq 0$.
    Applying the functor $\Hom(\UU^*,-)$ to the sequence:
    \[
    0\to K \to \HH^0(X,F)\otimes \OO_X \to I \to 0,
    \]
    one easily obtains an isomorphism  $\Hom_X(\UU^*,I)\cong
    \Ext^1_X(\UU^*,K)$. Then there exists a nontrivial morphism
    $\alpha : \UU^* \to I$.
    The calculation used to prove $\eqref{nongg2} \Rightarrow \eqref{nongg3}$
    shows that $\alpha$ is surjective, so we get \eqref{eq:nongg1}.
 \end{description}
\end{proof}

\begin{proof}[Proof of Claim \ref{cappa}]
  We observe that the restriction of $K$ to a general hyperplane
  section $S$ is stable, using Hoppe's criterion.
  Indeed, we have $\HH^0(S,K)=0$ by \eqref{tuttozero}, while
  the group $\HH^0(S,\wedge^2 K)$ vanishes since it is a subgroup of
  $\HH^0(S, K)\otimes\HH^0(S,F)=0$.
  Then from \eqref{eq:dimension} it follows that $c_2(K)\geq 8$. This
  proves the first assertion.

  Let us now show the second one.
  Tensoring \eqref{eq:sectionS} by
  $K(1)$, we are reduced to show the vanishing of the groups
  $\HH^2(X,K(1))$ and $\HH^1(S,K(1))$.

  Looking at the first one, assume by contradiction that there exists a nontrivial extension of
  the form:
  \[
    0\to\OO_X(-1) \to \tilde K \to K(1) \to 0,
  \]
  where $\tilde K$ is a rank $4$ vector bundle with $c_1(\tilde K)=1$
  and $c_2(\tilde K)<0$.
  Then $\tilde K$ is not semistable by
  Bogomolov's inequality \eqref{eq:bogomolov}.
  By considering the possible values of the slope of a
  destabilizing subsheaf of $\tilde{K}$, one sees that
  Harder-Narasimhan filtration has the form $0\subset K_1 \subset
  \tilde K$ and $Q=\tilde K/K_1$ is semistable, and $\mu(K_1)$ can be
  either $\frac{1}{2}$ or $\frac{1}{3}$.
  Then by Bogomolov's inequality we have $c_2(K_1)\geq0$.
  In any case $c_1(Q)=0$, so $c_2(Q)\geq 0$. This contradicts
  $c_2(\tilde K)<0$.

  Let us now turn to the group $\HH^1(S,K(1))$, and observe that it is
  dual to $\Ext^1_S(K_S(1),\OO_S)$.
  Assuming it to be nontrivial, we get a nonsplit exact sequence on $S$ of the form:
  \begin{equation}\label{eq:cappaS}
    0\to\OO_S \to \widetilde{K_S} \to K_S(1) \to 0,
  \end{equation}
  where $\widetilde{K_S}$ is a rank $4$ vector bundle on $S$ with $c_1(\widetilde{K_S})=2$
  and $c_2(\widetilde{K_S})\leq25$.
  Then $\widetilde{K_S}$ is not stable
  by \eqref{eq:dimension}.
  This time one can check that the only
  possible destabilizing subsheaf $K_1$ must have slope
  $\frac{1}{2}$. The same happens to $Q=\widetilde{K_S}/K_1$.
  By semistability of $K_1$ and $Q$ one has
  \[
  c_2(\widetilde{K_S})=c_2(K_1)+c_2(Q)+16 \geq 28,
  \]
  a contradiction.
\end{proof}

\begin{lem} \label{lem:1}
  The map $\varphi : F \to \phx(F)$ sends $\Mo_X(2,1,7)$ to $\Pic^{2}(\Gamma)$.
  If the sheaf $F$ is globally generated, then $\varphi$ is a local isomorphism
  around $F$.
\end{lem}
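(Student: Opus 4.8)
The plan is to treat the two assertions separately. For the first, I observe that every sheaf $F$ in $\Mo_X(2,1,7)$ satisfies the vanishing \eqref{eq:ipotesona} by Lemma \ref{lemmone}. Hence Lemma \ref{fascio}, applied with $c_2=7$, shows that $\cF:=\phx(F)$ is a vector bundle on $\Gamma$ of rank $c_2-6=1$ and degree $c_2-5=2$, that is, a line bundle of degree $2$. Therefore $\varphi$ maps the whole of $\Mo_X(2,1,7)$ into $\Pic^2(\Gamma)$, and this needs no further work.

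For the second assertion, the crucial point is that, by Lemma \ref{lemmone}, global generation of $F$ is \emph{equivalent} to the vanishing $\Hom_X(\UU^*,F)=0$. This vanishing is precisely what the hypothesis $c_2\geq 8$ supplied for free in Lemmas \ref{iniezione} and \ref{lem:petri} (compare \eqref{eq:zeri}); imposing it directly through global generation lets the arguments of those two lemmas run \emph{verbatim} in the case $c_2=7$. Concretely, I would apply $\Hom_X(\UU^*,-)$ to the functorial resolution \eqref{eq:resolution} and use that $\UU^*$ is exceptional, obtaining the natural isomorphisms \eqref{eq:nat1}, \eqref{eq:nat2} and the commutative diagram \eqref{eq:diagramma-petri-2}. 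This yields $\Ext^1_X(F,F)\cong\ker(\Ext^1_\Gamma(e,\cF))$ and $\Ext^2_X(F,F)\cong\cok(\Ext^1_\Gamma(e,\cF))$, where $e=e_{\cV^*,\cF}$.

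Next I would compute the codomain of $\Ext^1_\Gamma(e,\cF)$. Since $\cF$ is a degree-$2$ line bundle and $\cV$ has rank $2$ and degree $1$, the bundle $\cV\ts\cF$ has rank $2$ and degree $5$, so $\chi(\Gamma,\cV\ts\cF)=1$ by Riemann-Roch on the genus-$3$ curve $\Gamma$. The argument of Lemma \ref{lem:petri}, applicable since $\Hom_X(\UU^*,F)=0$, gives $\hh^0(\Gamma,\cV\ts\cF)=c_2-6=1$, whence $\hh^1(\Gamma,\cV\ts\cF)=0$, i.e. $\Ext^1_\Gamma(\cV^*,\cF)=0$. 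Consequently the target $A_F^*\ts\Ext^1_\Gamma(\cV^*,\cF)$ of $\Ext^1_\Gamma(e,\cF)$ vanishes, so $\Ext^2_X(F,F)=0$ (hence $\Mo_X(2,1,7)$ is smooth at $[F]$) and $\Ext^1_X(F,F)\cong\Ext^1_\Gamma(\cF,\cF)=\HH^1(\Gamma,\OO_\Gamma)$, the $3$-dimensional tangent space to $\Pic^2(\Gamma)$ at $[\cF]$.

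It then remains to identify this isomorphism with the differential of $\varphi$ and to conclude. The isomorphism factors as $\Ext^1_X(F,F)\cong\Ext^1_X(\ph\phx F,F)\cong\Ext^1_\Gamma(\cF,\cF)$, where the first map is precomposition with the counit $\ph\phx F\to F$ of the adjunction $\ph\dashv\phx$ (this is the surjection in \eqref{eq:resolution}) and the second is the adjunction isomorphism. By the triangle identity this composite agrees with the functorial map $\phx\colon\Ext^1_X(F,F)\to\Ext^1_\Gamma(\phx F,\phx F)$, which is exactly the differential $d\varphi$ at $[F]$. As both $\Mo_X(2,1,7)$ and $\Pic^2(\Gamma)$ are then smooth of dimension $3$ at the relevant points and $d\varphi$ is an isomorphism of their tangent spaces, $\varphi$ is \'etale, hence a local isomorphism, around $F$. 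I expect the main obstacle to be precisely this last identification: one must check carefully that the surjection in the functorial resolution \eqref{eq:resolution} is genuinely the adjunction counit, so that the triangle identity yields $d\varphi$; everything else is a bookkeeping assembly of the cited lemmas once global generation has been converted into $\Hom_X(\UU^*,F)=0$.
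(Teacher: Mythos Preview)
Your proof is correct and follows essentially the same route as the paper: both use Lemma~\ref{lemmone} to reduce to $\Hom_X(\UU^*,F)=0$, feed this into the resolution \eqref{eq:resolution} to obtain \eqref{eq:nat1}--\eqref{eq:nat2} and the diagram \eqref{eq:diagramma-petri-2}, deduce $\Ext^1_\Gamma(\cV^*,\cF)=0$, and conclude that the tangent map is an isomorphism. The only substantive difference is that the paper first records set-theoretic injectivity of $\varphi$ at $F$ (from \eqref{eq:nat1} the map $\zeta_F$ is determined up to scalar, so $F$ is recovered from $\cF$), and then combines this with the tangent computation; your passage ``\'etale, hence a local isomorphism'' tacitly uses exactly this injectivity, which you have available but do not state.
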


\begin{proof}
  Set $\cF = \phx(F)$.
  In view of \eqref{eq:veniscing} and Lemma \ref{fascio}, the map
  $\varphi$ takes values in $\Pic^{2}(\Gamma)$.
  Assume now $F$ globally generated. By Lemma \ref{lemmone} we have
  $\Hom_{X}(\UU^{*},F)=0$, so by applying the functor
  $\Hom_X(\UU^*,-)$ to the resolution \eqref{eq:resolution}
  we get \eqref{eq:nat1}, from which it follows that
  $\varphi$ is injective at $F$.

  Recall that $\Ext^{k}_{X}(\UU^{*},F)=0$ for $k=2,3$, and by
  Riemann-Roch we have $\chi(\UU^{*},F)=0$.
  Thus we must also have:
  \[
  \Ext^{1}_{X}(\UU^{*},F)=0,
  \]
  so $\Ext^{1}_{\Gamma}(\cV^*,\cF) = 0$.
  Therefore the map $\Ext^{1}_{\Gamma}(e_{\cV^*,\cF},\cF)$ is zero.
  Now, by the infinitesimal analysis of Lemma \ref{lem:petri}, the
  differential of $\varphi$ at $[F]$ induces an isomorphism:
  \[
  \Ext^{1}_{X}(F,F) \cong \ker (\Ext^{1}_{\Gamma}(e_{\cV^*,\cF},\cF))
  = \Ext^{1}_{\Gamma}(\cF,\cF) \cong \HH^{1}(\Gamma,\OO_{\Gamma}).
  \]
  
\end{proof}

Recall that we denote by $A_{L}$ the $2$ dimensional vector space $\Hom_{X}(\UU,\cI_{L})^{*}$.

\begin{lem} \label{sugiu}
  Let $L$ be a line contained in $X$.
  Then there is a natural injective map $\eta : \p(A_{L}) \mapsto \Mo_X(2,1,7)$
  such that any sheaf $F$ in the image of $\eta$ sits into \eqref{eq:nongg2}, for
  some sheaf $I$ sitting in \eqref{eq:nongg1}.
\end{lem}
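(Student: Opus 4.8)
The plan is to construct the map $\eta$ by associating to each point $[\gamma]\in\p(A_L)$ a sheaf $F$ sitting in the sequences \eqref{eq:nongg1} and \eqref{eq:nongg2}, and then to verify that $F$ lies in $\Mo_X(2,1,7)$ and that the assignment is injective and algebraic. First I would use the isomorphism $A_L^* \cong \Hom_X(\UU,\cI_L)$ from Lemma \ref{5}. A point $[\gamma]\in\p(A_L)$ corresponds (up to scalar) to a morphism $\gamma:\UU\to\cI_L$, and I would distinguish the two cases of Lemma \ref{5}: either $\gamma\in S_L$ is surjective, giving $\im(\gamma)=\cI_L$ and $\ker(\gamma)=I^*$ with $I$ sitting in \eqref{eq:nongg1}, or $[\gamma]\in\p(A_L)\setminus S_L$ gives a reducible conic. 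In the surjective case, dualizing $0\to\OO_X\to\UU^*\to I\to 0$ recovers the middle sheaf; the key point is that $I$ is a stable sheaf in $\Mo_X(2,1,8,2)$, a fact which follows from the Chern class computation and stability arguments already used in the proof of Proposition \ref{coniche} and in Lemma \ref{5}.

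The central step is to produce $F$ from $I$ via an extension of the form \eqref{eq:nongg2}, namely
\[
0 \to I \to F \to \OO_L(-1) \to 0.
\]
For this I would compute $\ext^1_X(\OO_L(-1),I)$ and show it is one-dimensional, so that there is a canonical nontrivial extension class (up to scalar) for each $[\gamma]$. Using the sequence \eqref{eq:nongg1} together with the splitting of $\UU^*$ over $L$ as $\OO_L^2\oplus\OO_L(1)$ (recorded in the proof of Proposition \ref{brillrette}), one reduces the computation of $\Ext^k_X(\OO_L(-1),I)$ to cohomology over $L$ and over $X$ via Serre duality, exactly as in the resolution arguments of Section \ref{sec:genus-9}. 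Once this $F$ is built, I would check the Chern classes $c_1(F)=1$, $c_2(F)=7$, $c_3(F)=0$, and verify stability of $F$: any destabilizing subsheaf would have to interact with the subsheaf $I$ and the quotient $\OO_L(-1)$, and a slope computation rules this out. This places $[F]$ in $\Mo_X(2,1,7)$, and by Lemma \ref{lemmone} (implication $\eqref{nongg3}\Rightarrow\eqref{nongg1}$) such an $F$ is never globally generated, so $\eta$ lands in the non-globally-generated locus.

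For injectivity, I would argue that from $F$ one recovers the data $([\gamma],L)$ intrinsically: the line $L$ is determined as the support of the torsion quotient, or equivalently via $\Hom_X(\UU^*,F)$, and then the subsheaf $I\subset F$ and hence the class $[\gamma]$ are recovered from the extension structure. The vanishing \eqref{eq:UE}, i.e. $\Ext^1_X(\UU,\EE^*_{y})=0$, and the simplicity of $\UU$ play the same role here that they did in the injectivity argument of Proposition \ref{coniche}: two non-proportional maps $\gamma_1,\gamma_2$ cannot produce isomorphic sheaves. Algebraicity of $\eta$ follows by performing the whole construction in families over $\p(A_L)$, building a universal extension; since all the relevant Ext-groups have constant rank along $\p(A_L)$, the extension class varies algebraically.

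The main obstacle I anticipate is handling the non-surjective locus $\p(A_L)\setminus S_L$ uniformly with the open dense $S_L$. For $[\gamma]\in S_L$ the construction is clean because $\im(\gamma)=\cI_L$ and $I$ is the honest sheaf in $\Mo_X(2,1,8,2)$; but for the five points corresponding to reducible conics $D\supset L$, the image $\im(\gamma)=\cI_D$ is different, and I must check that the extension construction and the resulting $F$ still behave correctly and give a well-defined point of $\Mo_X(2,1,7)$ that fits the required sequences. Showing that $\eta$ extends across these boundary points as a morphism (rather than merely a rational map), and that injectivity is preserved there, is where the argument is most delicate; I expect this will require a careful limiting argument or a direct family-theoretic construction over all of $\p(A_L)$ at once, rather than a pointwise definition followed by extension.
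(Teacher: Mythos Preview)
Your approach for the surjective locus $S_L$ is workable but more circuitous than the paper's. The paper simply sets $\eta([\gamma]):=\ker(\gamma)^*$ directly; dualising $0\to\ker(\gamma)\to\UU\to\cI_L\to 0$ and using $\EExt^1_X(\cI_L,\OO_X)\cong\OO_L(-1)$ produces the four-term sequence $0\to\OO_X\to\UU^*\to F\to\OO_L(-1)\to 0$, from which \eqref{eq:nongg1} and \eqref{eq:nongg2} are read off with $I=\im(\UU^*\to F)$. Your detour through first isolating $I$ and then rebuilding $F$ as the unique nontrivial extension requires the extra computation of $\ext^1_X(\OO_L(-1),I)$, and you must be careful that $I$ is not reflexive (indeed $I^{**}=F$), so ``$\ker(\gamma)=I^*$'' does not by itself determine $I$; you need to recover $I$ from the section $\OO_X\to\UU^*$ dual to the composite $\UU\to\cI_L\hookrightarrow\OO_X$.

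The genuine gap is the five non-surjective points, where the paper does \emph{not} use a limiting or family argument but gives an explicit pointwise construction. When $\im(\gamma)=\cI_C$ with $C=L\cup L'$ a reducible conic, one has $\ker(\gamma)\cong\EE_y^*$ for some $y\in\Gamma$, and dualising gives a surjection $\EE_y\twoheadrightarrow\OO_C$. Composing with the projection $\OO_C\twoheadrightarrow\OO_{L'}$ coming from $0\to\OO_L(-1)\to\OO_C\to\OO_{L'}\to 0$, the paper defines $\eta([\gamma])$ as the kernel of $\EE_y\to\OO_{L'}$; this is a non-locally-free sheaf in $\Mo_X(2,1,7)$ with double dual $\EE_y$, and one checks it fits \eqref{eq:nongg2}. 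Without this construction your $\eta$ is undefined on $\p(A_L)\setminus S_L$. For injectivity the paper also argues differently, by lifting an isomorphism $\ker(\gamma_1)\cong\ker(\gamma_2)$ to an automorphism of $\UU$ via the vanishing $\Ext^1_X(\cI_L,\UU)=0$ (resp.\ $\Ext^1_X(\UU,\EE_y^*)=0$ in the boundary case), then concluding by simplicity of $\UU$; your plan to recover $L$ ``as the support of the torsion quotient'' fails precisely on the open dense locus where $F$ is locally free.
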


\begin{proof}
  Let us define the map $\eta: \p(A_{L}) \to \Mo_X(2,1,7)$.
  In view of Lemma \ref{5}, for any element $[\gamma] \in \p(A_{L})$, we have two
  alternatives:
  \begin{enumerate}[i)]
  \item \label{su} the map $\gamma$ is surjective;
  \item \label{giu} the image of the map $\gamma$ is isomorphic to $\cI_{C}$, for
    some reducible conic $C \subset X$ which is the union of $L$ and
    another line $L' \subset X$.
  \end{enumerate}

  If \eqref{su} takes place, we define $\eta([\gamma])$ as the dual
  of $\ker(\gamma)$. This sheaf is easily seen to lie in $\Mo_X(2,1,7)$.
  Note that this correspondence is one to one.
  Indeed, assuming $\eta([\gamma_{1}]) = \eta([\gamma_{2}])$, we
  would have $K_{1} = \ker(\gamma_{1}) \cong K_{2} = \ker(\gamma_{2})$.
  But the isomorphism $K_{1} \cong K_{2}$ would then lift to an
  isomorphism $\UU \to \UU$, for $\Ext^{1}_{X}(\cI_{L},\UU)=0$,
  indeed:
  \[
  \ext^{1}_{X}(\cI_{L},\UU)= \ext^{2}_{X}(\OO_{L},\UU)=
  \ext^{1}_{X}(\UU,\OO_L(-1))= \hh^1(L,\UU^*(-1))=0.
  \]
  Since both $\UU$ and $\cI_{L}$ are simple, this would then mean that
  $\gamma_{1}$ is a multiple of $\gamma_{2}$.

  Assume now that \eqref{giu} takes place.
  We have thus an exact sequence of the form \eqref{eq:non-lunga},
  with $\beta^{\top} = \gamma$.
  Since $C$ contains $L$, we have:
  \begin{equation}
    \label{eq:C}
    0 \to \OO_{L}(-1) \to \OO_{C} \to \OO_{L'} \to 0,
  \end{equation}
  for some line $L'\subset X$.
  Dualizing \eqref{eq:non-lunga} one obtains \eqref{eq:lunga}.
  Patching this with \eqref{eq:C},
  we define a surjective map as the composition $\ker(\gamma)^{*} \epi \OO_{C} \epi
  \OO_{L'}$, and we let $\eta([\gamma])$ be the kernel of this map.
  Again, one proves easily that this sheaf lies in $\Mo_X(2,1,7)$.

  We prove now that $\eta$ is injective also in this case.
  Let us take two maps $\gamma_1$, $\gamma_2$ in $\p(A_L)$ and let $F$
  be the sheaf representing 
  $\eta(\gamma_1)=\eta(\gamma_2)$.
  Let $E=F^{**}$. We have $E/F = \OO_{L'}$, for some line $L' \subset
  X$, and $E$ lies in 
  $\Mo_X(2,1,6)$.
  By construction $\ker(\gamma_i) \cong E^*$, and $\im(\gamma_i)=\cI_C$ for all $i$, 
  where the conic $C$ is $L \cup L'$.
  Since $\Ext^1_X(\UU,E^*)=0$ by Lemma \ref{E*U}, and since $\UU$ is a
  simple sheaf, we conclude that $\gamma_1$ is proportional to $\gamma_2$.

  Finally, it is clear by the definition that in both cases \eqref{su}
  and \eqref{giu}, the sheaf defined by $\eta([\gamma])$ sits into \eqref{eq:nongg2}.
\end{proof}

\begin{lem} \label{sugiu2}
  Let $G$ be a sheaf in $\Mo_X(2,1,7)$, and assume that $G$ is not
  globally generated.
  Then the set of sheaves $F$ in $\Mo_X(2,1,7)$ such that
  $\varphi(F)=\varphi(G)$ is identified with
  $\eta(\p(A_{L}))$, for some line $L\subset X$.

  The subscheme of those sheaves $F$ which are not locally free,
  and satisfy $\varphi(F)=\varphi(G)$, has length $5$.
\end{lem}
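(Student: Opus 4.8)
The plan is to pin down the single point $\cL:=\varphi(G)\in\Pic^2(\Gamma)$ over which the whole fibre lives, then to describe $\varphi^{-1}(\cL)$ and match it with $\eta(\p(A_L))$. First I would exploit that $G$ is not globally generated: Lemma \ref{lemmone} then produces a line $L\subset X$ together with the two sequences \eqref{eq:nongg1} and \eqref{eq:nongg2}. The key preliminary observation is that $\phx(\OO_X)=\phx(\UU^*)=0$, since both $\OO_X$ and $\UU^*$ belong to the subcategory $\langle\OO_X,\UU^*\rangle$ of \eqref{eq:DX}, on which the projection $\ph\circ\phx$ onto $\ph(\D(\Gamma))$ vanishes (and $\ph$ is faithful). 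Applying $\phx$ to \eqref{eq:nongg1} gives $\phx(I)=0$, and then \eqref{eq:nongg2} yields $\cL=\phx(G)\cong\phx(\OO_L(-1))=\psi(L)$, which lies in $W$ by Proposition \ref{brillrette}. This fixes the line $L$.

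Next I would show that every $F\in\varphi^{-1}(\cL)$ is automatically non globally generated and is built from the same $L$. For the first point I argue by contradiction: if $F$ were globally generated, then by \eqref{eq:veniscing} the hypothesis \eqref{eq:ipotesona} holds, so Lemma \ref{lem:resolution} gives the resolution \eqref{eq:resolution} with $\dim A_F=c_2-6=1$. Applying $\Hom_X(\UU^*,-)$ to \eqref{eq:resolution}, using that $\UU^*$ is exceptional and that $\Hom_X(\UU^*,F)=0$ (Lemma \ref{lemmone}), I would obtain $\Hom_X(\UU^*,\ph(\cL))\cong A_F$ of dimension $1$; but adjunction and \eqref{eq:V*} identify this space with $\Hom_\Gamma(\cV^*,\cL)\cong\HH^0(\Gamma,\cV\ts\cL)$, which has dimension $2$ because $\cL\in W$. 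This contradiction forces $F$ to be non globally generated, so by Lemma \ref{lemmone} it fits into \eqref{eq:nongg2} for some line $L'$; the computation of the first paragraph gives $\varphi(F)=\psi(L')$, and injectivity of $\psi$ yields $L'=L$. Hence $\varphi^{-1}(\cL)$ is exactly the set of sheaves fitting into \eqref{eq:nongg2} with this fixed $L$.

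To identify this fibre with $\eta(\p(A_L))$, I would first note the inclusion $\eta(\p(A_L))\subseteq\varphi^{-1}(\cL)$, which follows from the last sentence of Lemma \ref{sugiu} and the first paragraph. For the reverse inclusion I would run \eqref{eq:resolution} for an arbitrary $F$ in the fibre (now $\ph(\phx(F))=\ph(\cL)$), exhibiting $F$ as $\cok(\zeta_F)$ for a nonzero $\zeta_F\colon\UU^*\to\ph(\cL)$; since $\Hom_X(\ph(\cL),F)\cong\Hom_\Gamma(\cL,\phx(F))=\Hom_\Gamma(\cL,\cL)=\C$ by adjunction, the surjection $\ph(\cL)\to F$, hence $\zeta_F$, is unique up to scalar, so $[\zeta]\mapsto\cok(\zeta)$ is a bijection from $\p(A_L)=\p(\Hom_X(\UU^*,\ph(\cL)))$ onto $\varphi^{-1}(\cL)$. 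In particular the fibre is an irreducible curve, and as $\eta$ is an injective morphism of $\p(A_L)\cong\p^1$ into it, the closed irreducible curve $\eta(\p(A_L))$ must coincide with the whole irreducible curve $\varphi^{-1}(\cL)$. I expect this matching of the $\eta$-parametrization with the cokernel parametrization to be the main obstacle, since it is exactly what guarantees that $\eta$ sweeps out the entire fibre rather than a proper subfamily.

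Finally, for the length statement I would transport the dichotomy of Lemma \ref{sugiu} across the identification $\eta\colon\p(A_L)\xrightarrow{\sim}\varphi^{-1}(\cL)$. When $[\gamma]\in S_L$ the map $\gamma$ is surjective and $\eta([\gamma])=\ker(\gamma)^*$ is reflexive of rank $2$ with $c_3=0$, hence locally free. When $[\gamma]\in\p(A_L)\setminus S_L$ one has an exact sequence $0\to\eta([\gamma])\to\ker(\gamma)^*\to\OO_{L'}\to 0$, with $L'$ the residual line of the reducible conic $L\cup L'$, so $\eta([\gamma])^{**}/\eta([\gamma])\cong\OO_{L'}\neq 0$ and $\eta([\gamma])$ is not locally free. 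Thus, via $\eta$, the non-locally-free sheaves in $\varphi^{-1}(\cL)$ are identified with the subscheme $\p(A_L)\setminus S_L$, which has length $5$ by Lemma \ref{5}; this yields the claimed length $5$.
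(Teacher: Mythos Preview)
Your argument is correct in substance and reaches the same conclusion, but the route through the middle paragraph differs from the paper's and the exposition there needs a small repair.

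Once both proofs have fixed the line $L$ and shown that every $F$ in the fibre is non globally generated along $L$, the paper proceeds \emph{concretely}: it dualizes \eqref{eq:nongg1} and \eqref{eq:nongg2} for $F$, obtaining a map $\delta\colon\UU\to\OO_X$ whose image is $\cI_L$ if $F$ is locally free and $\cI_C$ for a reducible conic $C=L\cup L'$ otherwise; setting $\gamma$ to be the corestriction of $\delta$ one checks directly that $F\cong\eta([\gamma])$. This exhibits an explicit inverse to $\eta$ and makes the locally-free/non-locally-free dichotomy visible at once.

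You instead inject the fibre into $\p(\Hom_X(\UU^*,\ph(\cL)))\cong\p(A_L)$ via the resolution \eqref{eq:resolution} and then combine with the injection $\eta$ going the other way. This is a legitimate and rather elegant shortcut, but the sentence ``$[\zeta]\mapsto\cok(\zeta)$ is a bijection'' is not yet justified: you have only constructed an injective map $j\colon\varphi^{-1}(\cL)\to\p(A_L)$, $F\mapsto[\zeta_F]$; you have not checked that $\cok(\zeta)$ lies in $\Mo_X(2,1,7)$ for an \emph{arbitrary} $\zeta$, so the inverse map is not defined, and the subsequent claim that the fibre is an irreducible curve then hangs on this. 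The clean fix is to compose: $j\circ\eta\colon\p^1\to\p^1$ is an injective morphism of projective curves (built from the family over $\p(A_L)$ defining $\eta$), hence an isomorphism; since $j$ is injective on points, $\eta$ must be surjective. With this one-line adjustment your proof is complete. The trade-off is that your approach avoids unpacking the dual sequences entirely, at the cost of reading the locally-free dichotomy off from $\eta$ afterwards rather than seeing it arise from the image of $\delta$.

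Your contradiction argument for ``$F$ globally generated $\Rightarrow\hh^0(\Gamma,\cV\otimes\cL)=1$, but $\cL\in W$ gives $\hh^0=2$'' is more explicit than the paper's appeal to Lemma~\ref{lem:1}, and is a nice way to see why the fibre over a point of $W$ contains no globally generated sheaf.
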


\begin{proof}
  In view of Lemma \ref{lemmone}, there exists a line $L\subset X$
  such that $G$ is not globally generated over $L$, i.e. we have the exact
  sequence \eqref{eq:nongg2}, with $F$ replaced by $G$.
  Applying the functor $\phx$ to this exact sequence, we get:
  \begin{equation}
    \label{eq:psiphi}
    \varphi(G) = \phx(G) \cong \phx(\OO_{L}(-1)) = \psi([L]),
  \end{equation}
  where $\psi$ is given by Proposition \ref{brillrette}.

  Since $\varphi$ is a local isomorphism on the set of globally
  generated sheaves, any sheaf $F$ with $\varphi(F)=\varphi(G)$ must
  not be globally generated.
  Dualizing \eqref{eq:nongg2} and \eqref{eq:nongg1} we obtain $F^{*}
  \cong I^{*}$ and:
  \begin{align}
    \nonumber & 0 \to F^{*} \to \UU \xr{\delta} \OO_{X} \to \EExt^{1}_{X}(I,\OO_{X}) \to 0 \\
    \label{eq:I*} & 0 \to \EExt^{1}_{X}(F,\OO_{X}) \to \EExt^{1}_{X}(I,\OO_{X}) \to
    \OO_{L} \to 0.
  \end{align}

  We have here the following two alternatives.
  \begin{enumerate}[a)]
  \item \label{a} the sheaf $F$ is locally free, and $\im(\delta) \cong \cI_{L}$;
  \item \label{b} we have $F/F^{**} \cong \OO_{L'}$ for some line $L'\subset X$,
    and by \eqref{doubledual} this implies:
    \[
    F^{**} \in \Mo_X(2,1,6), \qquad \im(\delta) \cong \cI_{C},
    \]
    where the conic $C$ is $L \cup L'$, and \eqref{eq:I*} becomes
    of the form \eqref{eq:C}.
  \end{enumerate}

  We let $\gamma$ be the
  restriction of $\delta$ to its image $\cI_{L}$.
  Clearly, if \eqref{a} takes place, then $F$ is isomorphic to
  $\eta([\gamma])$, and $\gamma$ is as in case \eqref{su} of Lemma
  \ref{sugiu}.

  Similarly, if \eqref{b} takes place, then $\gamma$ is as in case \eqref{giu} of Lemma
  \ref{sugiu}, and $F$ is isomorphic to $\eta([\gamma])$.
  The set of sheaves $F$ which are not locally free and with $\varphi(F) = \varphi(G)$ is thus in natural
  bijection with the set of elements $[\gamma]$ in $\p(A_{L})$ such
  that $\gamma$ is not surjective.
  By Lemma \ref{5}, this is identified with the set of reducible
  conics which contain $L$, which has length $5$.
\end{proof}

We are now in position to prove our main result.

\begin{proof}[Proof of Theorem \ref{main}]
  We have seen in Lemma \ref{lem:1} that $\varphi$ is a local
  isomorphism along the open set of globally generated sheaves.

  On the other hand, the map $\varphi$ equips the subscheme of
  sheaves which are not globally generated with a structure of $\p^{1}$
  bundle over $W=\psi(\sH^{0}_{1}(X))$.
  Indeed, if a sheaf $G$ is not globally generated over a line $L$, by
  \eqref{eq:psiphi}, $\varphi(G)=\psi([L])$ lies in $W$.
  Moreover by Lemmas \ref{sugiu} and \ref{sugiu2}, we have
  $\varphi(\eta([\gamma])) = \psi([L])$, for all $[\gamma] \in \p(A_L)$.

  Thus, it only remains to provide a natural identification of the
  fibre of $\varphi(G)$ with the projectivized normal bundle
  of $W \subset \Pic^{2}(\Gamma)$ at the point $\psi([L])$.
  By Remark \ref{rmk:normal} and Proposition \ref{brillrette}, the latter is
  canonically identified with $\p(A_{L})$.
  On the other hand, by Lemmas \ref{sugiu} and \ref{sugiu2}, via the
  map $\eta$ the former is also naturally identified with
  $\p(A_{L})$.
  This concludes the proof.
\end{proof}


\def\cprime{$'$} \def\cprime{$'$} \def\cprime{$'$} \def\cprime{$'$}
  \def\cprime{$'$} \def\cprime{$'$}
\providecommand{\bysame}{\leavevmode\hbox to3em{\hrulefill}\thinspace}
\providecommand{\MR}{\relax\ifhmode\unskip\space\fi MR }
\providecommand{\MRhref}[2]{%
  \href{http://www.ams.org/mathscinet-getitem?mr=#1}{#2}
}
\providecommand{\href}[2]{#2}

\end{document}